\newcommand{\leadingzero}[1]{\ifnum #1<10 0\the#1\else\the#1\fi}
\newcounter{temp1}
\newtheorem{satz}{Satz}
\theoremstyle{definition}
\newtheorem{example}{Example}
\newtheorem{remark}[example]{Remark}
\newtheorem{assumption}{Assumption}
\theoremstyle{plain}
\newtheorem{theorem}[satz]{Theorem}
\newtheorem{proposition}[satz]{Proposition}
\newtheorem{corollary}[satz]{Corollary}
\newtheorem{lemma}{Lemma}
\newcommand*{\defeq}{\mathrel{\vcenter{\baselineskip0.5ex \lineskiplimit0pt
			\hbox{\scriptsize.}\hbox{\scriptsize.}}}%
	=}
\newcommand{\f}[1]{{\mathbf{#1}}}
\newcommand{\ovec}{{\operatorname{vec}}}
\newcommand{\ov}{{\operatorname{v}}}
\newcommand{\tr}{{\operatorname{trace}}}
\newcommand{\var}{{\operatorname{\mathbb{V}\mathrm{ar}}}}
\newcommand{\cov}{{\operatorname{\mathbb{C}\mathrm{ov}}}}
\newcommand{\cor}{{\operatorname{\mathbb{C}\mathrm{or}}}}
\newcommand{\op}[1]{{\operatorname{o_\mathbb{P}}\left(#1\right)}}
\newcommand{\Op}[1]{{\operatorname{\mathcal{O}_\mathbb{P}}\left(#1\right)}}
\newcommand{\one}{\mathbbm{1}}
\newcommand{\normz}[1]{{\left\lVert#1\right\rVert_2}}
\newcommand{\normi}[1]{{\left\lVert#1\right\rVert_\infty}}
\newcommand{\norme}[1]{{\left\lVert#1\right\rVert_1}}
\newcommand{\normzq}[1]{{\left\lVert#1\right\rVert_2^2}}
\newcommand{\normzv}[1]{{\left\lVert#1\right\rVert_2^4}}
\newcommand{\normzM}[1]{{\left\lVert#1\right\rVert_{\mathrm{M},2}}}
\newcommand{\normFM}[1]{{\left\lVert#1\right\rVert_{\mathrm{F}}}}
\newcommand{\E}{{\mathbb{E}}}
\newcommand{\R}{{\mathbb{R}}}
\newcommand{\N}{{\mathbb{N}}}
\newcommand{\X}{{\mathbb{X}}}
\newcommand{\Y}{{\mathbb{Y}}}
\newcommand{\Prob}{{\mathbb{P}}}
\newcommand{\orthproj}[1]{{\mathrm{P_{#1^{\perp}}}}}
\newcommand{\proj}[1]{{\mathrm{P_{#1}}}}
\newcommand{\sign}{{\operatorname{sign}}}
\newcommand{\rank}{{\operatorname{rank}}}
\newcommand{\supp}{{\operatorname{supp}}}
\newcommand{\init}{{\mathrm{init}}}
\newcommand{\AL}{{\mathrm{AL}}}
\newcommand{\LS}{{\mathrm{LS}}}
\newcommand{\Ymu}{{\Y_n^{\mu}}}
\newcommand{\Xmu}{{\X_n^{\mu}}}
\newcommand{\XmuS}{{\X_{n,S_\mu}^{\mu}}}
\newcommand{\epsmu}{{\varepsilon_n^{\mu}}}
\newcommand{\Ysigma}{{\Y_n^{\sigma}}}
\newcommand{\Xsigma}{{\X_n^{\sigma}}}
\newcommand{\XsigmaS}{{\X_{n,S_\sigma}^{\sigma}}}
\newcommand{\epssigma}{{\varepsilon_n^{\sigma}}}
\newcommand{\bx}{\f{x}}
\newcommand{\bA}{\f{A}}
\newcommand{\bw}{\f{w}}
\newcommand{\bW}{\f{W}}
\newcommand{\bv}{\f{v}}
\newcommand{\bu}{\f{u}}
\newcommand{\Cmul}{{c_{\,\mathrm{C}^{\mu},\mathrm{l}}}}
\newcommand{\Cmuu}{{c_{\,\mathrm{C}^{\mu},\mathrm{u}}}}
\newcommand{\Bmuu}{{c_{\,\mathrm{B}^{\mu},\mathrm{u}}}}
\newcommand{\Csigmal}{{c_{\,\mathrm{C}^{\sigma},\mathrm{l}}}}
\newcommand{\Csigmau}{{c_{\,\mathrm{C}^{\sigma},\mathrm{u}}}}
\newcommand{\Bsigmau}{{c_{\,\mathrm{B}^{\sigma},\mathrm{u}}}}
\newcommand{\tauvX}{{\tau_{\ov(\f{X})}^2}}
\newcommand{\tauA}{{\tau_{\f{A}}^2}}
\begin{document}
	
%
%
{\title{Bounded support in linear random coefficient models: Identification and variable selection}
\author{Philipp Hermann and Hajo Holzmann\footnote{Corresponding author. Prof.~Dr.~Hajo Holzmann, Department of Mathematics and Computer Science, Philipps-Universität Marburg, Hans-Meerweinstr., 35043 Marburg, Germany} \\
\small{Department of Mathematics and Computer Science}  \\
\small{Philipps-Universität Marburg} \\
\small{\{herm, holzmann\}@mathematik.uni-marburg.de}}

\maketitle
%
		
%
}

\begin{abstract}
We consider linear random coefficient regression models, where the regressors are allowed to have a finite support. First, we investigate identifiability, and show that the means and the variances and covariances of the random coefficients are identified from the first two conditional moments of the response given the covariates if the support of the covariates, excluding the intercept, contains a Cartesian product with at least three points in each coordinate. We also discuss ientification of higher-order mixed moments, as well as partial identification in the presence of a binary regressor. Next we show the variable selection consistency of the adaptive LASSO for the variances and covariances of the random coefficients in finite and moderately high dimensions. This implies that the estimated covariance matrix will actually be positive semidefinite and hence a valid covariance matrix, in contrast to the estimate arising from a simple least squares fit. We illustrate the proposed method in a simulation study.    		

\end{abstract}
\noindent {\itshape Keywords.}\quad Adaptive LASSO,  random coefficient regression model, random effects, variable selection
		
		
	%
	%
	%

\section{Introduction}

In various statistical analyses in fields such as medicine and economics, there is a large extent of individual heterogeneity in the effect of observed covariates, which is routinely modeled by random coefficients  - also called random effects - models. For example, in contemporary microeconomic data sets with many observations and potentially a large number of explanatory variables, non-observed heterogeneity plays an important role \citep{lewbel2005modeling}.  
An important issue then is to select those coefficients which actually are random if there is a large set of potential variables which might have individual - specific effects. 

To this end, in this paper we shall consider the following random coefficients regression model 
\begin{align}
	Y =  B_0 + \f{W^\top} \f{B}  \,,  \label{random:coefficient:model:intercept}
\end{align}
where $\f{B}, \f{W} \in \R^{p-1}$ are independent random vectors, $B_0$ is a random variable and $\f{W}=(W_1,\dotsc,W_{p-1})^\top$ represents the random regressors.  

Model \eqref{random:coefficient:model:intercept}, which is related to random effects models from the literature on biostatistics \citep{schelldorfer2011estimation}, was introduced by \citet{Hildreth1968} and \citet{Swami1970}. They assumed that $B_0, \ldots, B_{p-1}$ are independent, and focused on estimating their means and variances by least squares in two stages. \citet{arellano2012identifying} studied a panel - version of the random coefficient model. 
 \citet{beran1992} initiated the nonparametric analysis of the distribution of the random coefficients.  For $p=2$, \citet{beran1996} used Fourier methods to construct an estimator of the joint density of $(B_0, B_1)^\top$. Their method was taken up again by \citet{Hoderlein2010}, who put it into the form of a more conventional kernel estimator and generalized it to arbitrary dimension $p$. Further related literature includes \citet{Gautier2013}, who analyze a binary choice version of the model,  \citet{lewbel2017unobserved} who study a generalization of \eqref{random:coefficient:model:intercept} in which the products $B_1 \, W_1, \ldots ,B_{p-1}\, W_{p-1}$ are related to $Y$ by some arbitrary (possibly non-linear) unknown function, as well as  \citet{hoderlein2017triangular}, \citet{breunig2018specification}, \citet{dunker2019tests} and \citet{holzmann2020rate}. Recently, \citet{gaillac2019adaptive} studied nonparametric identification and adaptive estimation in a random coefficient regression model, where covariates have bounded but continuous variation. 

The  above nonparametric approaches which target the full density of the random coefficients require a large or at least, as in \citet{gaillac2019adaptive}, continuous support of the covariates, which is often an unrealistic assumption in applications. In this paper, we shall focus on situations in which the covariates have bounded and in particular finite support. 
In this latter setting there is little hope to identify and estimate the density of the random coefficients nonparametrically. 
Therefore, we shall focus on the first and second moments, which are arguably of most interest in applications. Variable selection techniques for means, variances and covariances of the random coefficients then allow to determine which variables have an effect on average (non-zero mean of the coefficient), which variables have heterogenous effects (non-zero variances) and for which covariates the effects are correlated. In particular, we shall argue that it is important not to focus exclusively on the variances of the random coefficients, but to take the full variance-covariance matrix into account.
Further, estimating the first and second moments of the random coefficients then allows to predict the first and second moments of the response $Y$ conditional on the covariates. Finally, normality of the random coefficients is a common parametric assumption, under which their distribution is fully determined by means, variances and covariances.

\medskip

Model \eqref{random:coefficient:model:intercept} is related to random effects models from the literature on biostatistics \citep{schelldorfer2011estimation}. These are studied in a longitudinal framework, and the goal is then to estimate the fixed effects by using a quasi-likelihood approach and to predict the random effects. Papers which study these models in a high-dimensional setting are, among others, \citet{schelldorfer2011estimation} and  \citet{li2021inference}.

\medskip

The paper is organized as follows. In Section \ref{sec:model:random:coefficients} we clarify  under which assumptions on the support of the covariates,  first and second moments of the random coefficients  are identified. It turns out that identification holds if the support of the covariate vector contains a Cartesian product with at least three support points for each covariate. Conversely, identification generally fails if one covariate only has two support points. 
In Section \ref{ch:fixedp}  we turn to estimation and in particular to variable selection with a focus on the variances and covariances in model \eqref{random:coefficient:model:intercept}. We use the adaptive LASSO originally introduced in \citet{zou2006adaptive}, which may achieve variable selection consistency without additional restrictive assumptions such as the irrepresentable assumption required for the ordinary LASSO, and show the variable selection consistency in fixed and moderately high dimensions. The technical issues are to deal with the residuals when estimating centered second moments of the random coefficients as well as with the heteroscedasticity of the model.] Section \ref{simulations} contains some numerical illustrations. Proofs of the main results are given in Section \ref{sec:mainproofs}, while some further auxiliary results are deferred to the supplementary appendix. 

We shall use the following notation: For an $n \times p$ matrix $\mathbb{X}$ and a subset $S \subseteq \{1, \ldots, p\}$ of the index set, $\mathbb{X}_S$ denotes the $n \times |S|$ matrix containing those columns of $\mathbb{X}$ with indices in $S$. A similar notation is $\boldsymbol{v}_S$ for a vector $\boldsymbol v \in \R^p$. $\normzM{\mathbb{X}}$ denotes the operator norm of $\mathbb{X}$ for the Euclidean norm, and $\normFM{\mathbb{X}}$ the Frobenius norm, that is the Euclidean norm of the vectorization of $\mathbb{X}$. 

%
%


\section{Identification of first and second moments} \label{sec:model:random:coefficients}

In model \eqref{random:coefficient:model:intercept} we also write
$\f{A} = (B_0,\f{B^\top})^\top \in \R^p$, so that $\f{A} = (A_1,\dotsc,A_p)^\top$ and $\f{W}$ are independent. 
%
 We assume that the first and second moments of the random coefficients $\f{A}$ exist and set 
%
\begin{align}
	\mu^* \defeq \E[\f{A}] \quad \in \R^{p}\,, \qquad \text{and} \qquad 	\Sigma^* \defeq \cov(\f{A}) = \E\big[(\f{A}-\mu^*) (\f{A}-\mu^*)^\top] \quad \in \R^{p \times p}\,.  \label{def:cov:true}
\end{align}
In this section we consider conditions for identification and partial identification of the moments $\mu^*$ and $\Sigma^*$ in terms of the support of the covariates $\f{W}$. 

While one may argue that means and variances are of main applied interest, the joint variation of the random coefficients as described by the covariances and the correlations is also relevant. Further, we shall see that excluding covariances from the analysis and falsely assuming a diagonal covariance matrix a-priori can lead to wrong conclusions about the (non-)randomness of the coefficients. The proofs of the results in this section are collected in Section \ref{sec:identification:proofs}. 


%
To illustrate, first consider the case of a single regressor, resulting in the model 
\begin{equation}\label{eq:randomcoefficients1}
	Y = B_0 + W_1 \, B_1 \,.
\end{equation}
For $W_1$ supported on $\{0,1\}$ we have the following result. 
\begin{proposition}\label{prop:twosupportpointsfirst}
	Suppose that in model (\ref{eq:randomcoefficients1}) the random variable $W_1 \in \{0,1\}$ is binary, and denote the identified standard deviations by 
	\[ s_1 = \sqrt{\var (B_0) } \,,\qquad s_2 = \sqrt{ \var (B_0+ B_1) } \,.\]
	Then each value 
	\[ \sqrt{\var( B_1) } \in \big[| s_1 - s_2|,s_1 + s_2 \big] \]
	is consistent with $s_1$ and $s_2$, provided the correlation $\rho = \cor(B_0,B_1)$ is chosen for $\sqrt{\var( B_1) } >0$ as 
	\begin{equation}\label{eq:boundcorrelation}
		\rho = \frac{s_2^2 - s_1^2 - \var( B_1) }{2\,  s_1 \sqrt{\var( B_1) }} \in 
		\begin{cases}
[-1,1], & \text{if } s_2 > s_1 \,,\\ 
\big[-1, -\sqrt{s_1^2 - s_2^2}/s_1\big] & \text{if } s_1 \geq s_2 \,.
		\end{cases}  
	\end{equation}
\end{proposition}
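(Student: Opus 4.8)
\emph{Proof plan.} The plan is to reduce the statement to an elementary fact about admissible $2\times 2$ covariance matrices. First I would isolate what is actually identified: conditioning in \eqref{eq:randomcoefficients1} on $W_1 = 0$ recovers the law of $B_0$, hence its mean and variance $s_1^2$, while conditioning on $W_1 = 1$ recovers the law of $B_0 + B_1$, hence its mean and variance $s_2^2$; the quantities $\var(B_1)$ and $\cov(B_0,B_1)$ enter neither conditional moment separately. So the assertion becomes: a value $v^2 \defeq \var(B_1) > 0$ is compatible with the identified pair $(s_1,s_2)$ precisely when one can pick a correlation $\rho \defeq \cor(B_0,B_1) \in [-1,1]$ for which $\var(B_0 + B_1) = s_2^2$.

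Next I would expand $s_2^2 = \var(B_0 + B_1) = s_1^2 + v^2 + 2\rho\, s_1 v$ and solve for $\rho$, obtaining exactly the formula in \eqref{eq:boundcorrelation}; it then remains to see for which $v > 0$ this expression lies in $[-1,1]$ (the case $s_1 = 0$ being degenerate, since then $B_0$ is a.s.\ constant and $v = s_2$ is forced). Multiplying by $2 s_1 v > 0$, the condition $\rho \le 1$ becomes $s_2^2 \le (s_1 + v)^2$, i.e.\ $v \ge s_2 - s_1$, and $\rho \ge -1$ becomes $s_2^2 \ge (s_1 - v)^2$, i.e.\ $|v - s_1| \le s_2$, that is $s_1 - s_2 \le v \le s_1 + s_2$. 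Together these are exactly $v \in \big[|s_1 - s_2|, s_1 + s_2\big]$, which is just the reverse triangle inequality in $L^2$ for $B_0 - \E B_0$, $B_1 - \E B_1$ and their sum.

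For the refined bounds on $\rho$ I would split into cases. If $s_2 > s_1$, the two inequalities above hold throughout $\big[|s_1-s_2|,s_1+s_2\big]$, so the formula gives $\rho \in [-1,1]$ there (indeed it sweeps the whole interval as $v$ does). If $s_1 \ge s_2$, I would verify the sharper bound $\rho \le -\sqrt{s_1^2 - s_2^2}/s_1$: after clearing the positive denominator this reads $s_2^2 - s_1^2 - v^2 + 2v\sqrt{s_1^2 - s_2^2} \le 0$, whose left-hand side equals $-\big(v - \sqrt{s_1^2 - s_2^2}\big)^2 \le 0$ — this perfect-square identity is really the only non-routine step. Combined with $\rho \ge -1$ from the previous paragraph, this gives the stated range. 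Finally, to pass from "$\rho$ admissible" to genuine non-identifiability, for each such $v$ (and its $\rho$) I would exhibit an actual joint law of $(B_0,B_1)$ with the prescribed means, variances $s_1^2, v^2$, and correlation $\rho$ — e.g.\ a bivariate normal, whose covariance matrix is positive semidefinite precisely because $\rho \in [-1,1]$ — so that the induced distribution of $(Y,W_1)$ reproduces the identified conditional moments while $\var(B_1)$ takes the chosen value; the endpoint $v = 0$, which occurs only when $s_1 = s_2$, is handled directly by taking $B_1$ constant.
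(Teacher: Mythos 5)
Your proposal is correct and follows essentially the same route as the paper: expand $s_2^2 = s_1^2 + v^2 + 2\rho\, s_1 v$, translate $|\rho|\le 1$ into the interval $\big[|s_1-s_2|,\,s_1+s_2\big]$ for $v$, and then sharpen the upper bound on $\rho$ when $s_1 \ge s_2$. The only differences are cosmetic: you verify $\rho \le -\sqrt{s_1^2-s_2^2}/s_1$ via a perfect-square identity whereas the paper maximizes $\rho$ over $v$ (locating the maximum at $v=\sqrt{s_1^2-s_2^2}$), and you make explicit the realizability step (a bivariate normal attaining any admissible pair $(v,\rho)$) that the paper leaves implicit.
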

%
Thus, to conclude from $\var (B_0) = \var(B_0 + B_1)$ that 
$\var(B_1) = 0 $ fully relies on the assumption of a diagonal covariance matrix, without this assumption, $B_1$ can well be random.

On the other hand, the following proposition shows that three distinct support points of $W_1$ are enough to identify the means $\E [B_j]$, the variances $\var(B_j)$, $j=0,1$, and the covariance $\cov(B_0,B_1)$. From Proposition \ref{prop:twosupportpointsfirst} and not surprisingly, two support points are insufficient for this purpose.

\begin{proposition}\label{prop:firstsimpleprop}
	In model \eqref{eq:randomcoefficients1}, if $W_1$ has $n+1$ support points and $\E \big[ |B_0|^n \big], \E \big[|B_1|^n\big]<\infty$, then all mixed moments $\E \big[B_0^j\, B_1^k\big]$, $j,k \geq 0$, $j+k \leq n$, are identified. 
\end{proposition}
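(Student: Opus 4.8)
The plan is to read off, from the observable joint law of $(Y,W_1)$, the conditional moments $\E[Y^m \mid W_1 = w]$ at each support point $w$ of $W_1$, expand them via the independence of $(B_0,B_1)$ and $W_1$ together with the binomial theorem into polynomials in $w$ whose coefficients are precisely the sought mixed moments, and then recover those coefficients by polynomial interpolation through the $n+1$ support points.

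Concretely, fix $m \in \{0,1,\dots,n\}$. Since $\E[|B_0|^n]<\infty$, $\E[|B_1|^n]<\infty$ and the support of $W_1$ is finite, hence bounded, we have $\E[|Y|^m] < \infty$, so the map $w \mapsto \E[Y^m \mid W_1 = w]$ is well-defined on the support of $W_1$ and is a functional of the distribution of $(Y,W_1)$, hence identified. Using independence of $(B_0,B_1)$ and $W_1$,
\[
\E[Y^m \mid W_1 = w] \;=\; \E\big[(B_0 + w\,B_1)^m\big] \;=\; \sum_{k=0}^{m} \binom{m}{k}\, w^{k}\; \E\big[B_0^{\,m-k}\, B_1^{\,k}\big].
\]

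Thus, for each fixed $m \le n$, the map $w \mapsto \E[Y^m\mid W_1=w]$ coincides with a polynomial $P_m(w)=\sum_{k=0}^m c_{m,k}\,w^k$ of degree at most $m$, with coefficients $c_{m,k}=\binom{m}{k}\,\E[B_0^{m-k}B_1^k]$. Let $w_0,\dots,w_n$ denote the $n+1$ distinct support points of $W_1$; the values $P_m(w_0),\dots,P_m(w_n)$ are identified. Since $m\le n$, the associated $(n+1)\times(m+1)$ coefficient matrix $\big(w_i^{\,k}\big)_{0\le i\le n,\,0\le k\le m}$ has full column rank $m+1$ by the usual Vandermonde argument (the nodes $w_i$ are pairwise distinct), so $c_{m,0},\dots,c_{m,m}$, and with them $\E[B_0^{m-k}B_1^k]$ for $k=0,\dots,m$, are uniquely determined. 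Letting $m$ run through $0,1,\dots,n$ then yields identification of $\E[B_0^jB_1^k]$ for all $j,k\ge0$ with $j+k\le n$.

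I expect the argument to be essentially routine: the only points needing attention are the integrability bookkeeping that justifies the conditional expectations, and the observation that the distinctness of the $n+1$ support points is exactly what makes the interpolation systems uniquely solvable — so that $n$ support points would leave the top-order mixed moments under-determined, in line with Proposition \ref{prop:twosupportpointsfirst}.
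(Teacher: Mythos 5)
Your proposal is correct and follows essentially the same route as the paper: expand $\E[Y^m\mid W_1=w]$ via independence and the binomial theorem, then invert the resulting Vandermonde system over the distinct support points. The only cosmetic difference is that you treat each order $m\le n$ directly via the full column rank of the rectangular Vandermonde matrix, whereas the paper handles order $n$ and dispatches lower orders by induction; these are interchangeable.
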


%
%


\subsection{Identification of the covariance matrix} \label{sec:identification:fullpoint}
Now let us turn to the identification of $\mu^* $ and $\Sigma^*$  in \eqref{def:cov:true} in general dimensions. To this end, consider the half-vectorization of symmetric matrices of dimension $p \times p$,
\begin{align}\label{def:vec} 
	\ovec(M) =\big(M_{11},\dotsc,M_{pp},M_{12},\dotsc,M_{1p},M_{23},\dotsc,M_{2p},\dotsc, M_{(p-1)p} \big)^\top \in \R^{\frac{p(p+1)}{2}} 
\end{align} 
for $ M \in \R^{p \times p}$ with $M^\top = M$, and set 
\[ \sigma^* \defeq \ovec(\Sigma^*).\]
Note that the first $p$ entries of $\sigma^*$ are the variances and the remaining entries are the covariances.
In model \eqref{random:coefficient:model:intercept} we have that 
\begin{align}\label{linear:system:identifying:variances}
	\var\,\big(Y \, \big| \, \bW = \bw) = (1, \f{w^\top}) \, \Sigma^* \, (1, \f{w^\top} )^\top,
\end{align} 
so that the quadratic form in $\Sigma^*$ is identified over $(1, \f{w})$ with $\f{w}$ ranging over the support of $\f{W}$.
%
%
 Note that \eqref{linear:system:identifying:variances} can be written in vectorized form as 
\begin{align}\label{linear:system:identifying:variances1}
 \var\,\big(Y \, \big| \, \bW = \bw) & = \big(1, (\f{w}^2)^\top, 2 \f{w^\top}, 2 w_1 w_2,\dotsc, 2 w_1 w_{p-1}, 2 w_2 w_3,\dotsc, 2 w_{p-2} w_{p-1} \big)\, \sigma^*\,\nonumber\\
 &= \ov \big( (1, \f{w^\top })^\top \big)^\top\, \sigma^*\,,
\end{align}
where we recall that $\sigma^* = \ovec(\Sigma^*)$, and the vector transformation $\ov$ is defined by 
\begin{align}\label{def:v}
	\ov(\bx) = \big(x_1^2,\dotsc,x_p^2,2 x_1 x_2,\dotsc,2 x_1 x_p,2 x_2 x_3,\dotsc, 2 x_2 x_p,\dotsc, 2 x_{p-1} x_p \big)^\top \in \R^{\frac{p(p+1)}{2}} \,, \quad \bx \in \R^p.
\end{align}
Based on \eqref{linear:system:identifying:variances1} we can establish linear equations for the $p (p+1)/2$ entries of $\Sigma^*$ respectively $\sigma^*$. With the above notation, we may state the following basic result. 

\begin{theorem}\label{prop:necsuffcond}
	In model \eqref{random:coefficient:model:intercept} a sufficient condition for identification of the mean vector $\mu^*$ and the covariance matrix $\Sigma^*$ is the existence of $p(p+1)/2$ points $\f{w_1},\dotsc, \f{w_{p(p+1)/2}} \in \R^{p-1}$ in the support of $\bW$, for which the matrix 
\begin{equation}\label{eq:matrixident}
	 S = \bigg[ \ov \Big( (1, \f{w_1^\top} )^\top \Big) ,\dotsc, \ov \Big( (1, \f{w_{p(p+1)/2}^\top} )^\top \Big) \bigg]^\top 
	\end{equation} 
	of dimension $p(p+1)/2 \times p(p+1)/2$ is of full rank. This condition is also necessary for identification in the subset of full-rank covariance matrices.  
\end{theorem}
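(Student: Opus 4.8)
The plan is to reduce everything to a rank computation for a system of linear equations in $\mu^*$ and $\sigma^* = \ovec(\Sigma^*)$. The basic ingredient is the bilinear identity $\ov(\bx)^\top \ovec(M) = \bx^\top M\, \bx$, valid for every symmetric $M \in \R^{p\times p}$ and every $\bx \in \R^p$, which is immediate from \eqref{def:vec} and \eqref{def:v}. With $\bx = (1,\f{w^\top})^\top$ and $M = \Sigma^*$ this identity is exactly the passage from \eqref{linear:system:identifying:variances} to \eqref{linear:system:identifying:variances1}: the conditional variance $\var(Y \mid \bW = \bw)$ is the value at $\sigma^*$ of the linear functional $\sigma \mapsto \ov((1,\f{w^\top})^\top)^\top \sigma$, while $\E[Y \mid \bW = \bw] = (1,\f{w^\top})\,\mu^*$ is linear in $\mu^*$. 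Since both conditional moments are determined by the law of $(Y,\bW)$ at each support point of $\bW$, identification of $(\mu^*,\Sigma^*)$ amounts to unique solvability of these linear systems.

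For sufficiency I would stack \eqref{linear:system:identifying:variances1} at the $p(p+1)/2$ prescribed points, obtaining $S\,\sigma^* = \big(\var(Y \mid \bW = \f{w_1}),\dots,\var(Y \mid \bW = \f{w_{p(p+1)/2}})\big)^\top$ with $S$ as in \eqref{eq:matrixident}; invertibility of $S$ then pins down $\sigma^*$, hence $\Sigma^*$. For $\mu^*$ I would first note that full rank of $S$ already forces the vectors $(1,\f{w_i^\top})$, $i = 1,\dots,p(p+1)/2$, to span $\R^p$: if they all lay in a hyperplane $\{\bx : \f{a}^\top \bx = 0\}$ with $\f{a} \ne 0$, then $\ov((1,\f{w_i^\top})^\top)^\top \ovec(\f{a}\f{a}^\top) = \big(\f{a}^\top(1,\f{w_i^\top})^\top\big)^2 = 0$ for all $i$, so $\ovec(\f{a}\f{a}^\top)$ would be a nonzero element of the kernel of $S$, a contradiction. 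Given such a spanning set, the conditional-mean equations $\E[Y \mid \bW = \f{w_i}] = (1,\f{w_i^\top})\mu^*$ identify $\mu^*$.

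For necessity, suppose no collection of $p(p+1)/2$ support points gives a full-rank $S$. Then the subspace $V \defeq \operatorname{span}\{\ov((1,\f{w^\top})^\top) : \bw \in \supp(\bW)\}$ has dimension strictly below $p(p+1)/2$, so there is a nonzero $\delta$ orthogonal to $V$. Writing $\delta = \ovec(\Delta)$ with $\Delta$ symmetric and nonzero, the identity gives $(1,\f{w^\top})\,\Delta\,(1,\f{w^\top})^\top = 0$ for all $\bw \in \supp(\bW)$. Now fix any full-rank covariance matrix $\Sigma^*$ and any mean $\mu^*$; since positive definiteness is an open condition, $\Sigma^* + t\Delta$ is, for all small enough $t \ne 0$, a full-rank covariance matrix different from $\Sigma^*$. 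Realizing $\f{A}$ as Gaussian with mean $\mu^*$ and covariance $\Sigma^*$, respectively $\Sigma^* + t\Delta$, independent of $\bW$ in both cases, produces two random-coefficient models with different covariance parameters but, by the choice of $\Delta$, identical conditional mean and conditional variance of $Y$ given $\bW$ --- hence identical conditional, and therefore joint, law of $(Y,\bW)$. Thus $\Sigma^*$ is not identified, even when it is known to be of full rank.

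The genuinely routine parts are the verification of the bilinear identity and the small-perturbation argument. The step that deserves care is the equivalence used at the start of the necessity direction --- "no $p(p+1)/2$ support points give a full-rank $S$" versus "$\dim V < p(p+1)/2$" --- together with confirming that $\Sigma^* + t\Delta$ remains a bona fide full-rank covariance matrix realizable by an admissible random-coefficient vector; I do not anticipate any obstacle beyond this bookkeeping.
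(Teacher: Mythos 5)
Your proposal is correct and follows essentially the same route as the paper: for sufficiency you invert the stacked linear system $S\,\sigma^* = \big(\var(Y\mid \bW=\f{w_i})\big)_i$, and for necessity you perturb a positive definite $\Sigma^*$ by a symmetric $\Delta$ with $\ovec(\Delta)$ orthogonal to the span of the vectors $\ov\big((1,\f{w^\top})^\top\big)$ and realize $\f{A}$ as Gaussian, exactly as in the paper's proof. The only cosmetic difference is how you show that full rank of $S$ forces the vectors $(1,\f{w_i^\top})$ to span $\R^p$ (you exhibit $\ovec(\f{a}\f{a}^\top)$ as a would-be kernel element of $S$, whereas the paper extracts the $\big[1,\,2\f{w_i^\top}\big]$ submatrix and invokes general position together with Lemma \ref{prop:onlyvariances}); both arguments are valid and reach the same conclusion.
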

The theorem remains valid if one can show that for $m \geq p (p+1)/2$ support points, the resulting matrix $S_m$ has full rank $p (p+1)/2$. 
%
%
%
%
In the following example, we show that the condition of the previous theorem can never be satisfied if one of the regressors only has two support points. 
\begin{example}\label{ex:twosupportpoints}
	Suppose that $W_1$ has only two support points $a$ and $b$ and that the joint support of $\bW$ is finite. Then the matrix $S_m$, where $m$ is the total number of support points, has rank at most $p (p+1)/2-1$. Thus, from Theorem \ref{prop:necsuffcond}, full-rank covariance matrices $\Sigma^*$ are not identified. Indeed, the matrix $S_m^\top$ contains the submatrix 
	\[ \begin{bmatrix} 1 & \dotsc & 1 & 1 & \dotsc&  1 \\ a^2 & \dotsc & a^2 & b^2 & \dotsc &  b^2 \\ 2a & \dotsc&  2a & 2b & \dotsc & 2b \end{bmatrix} \quad \in \R^{3 \times m}\,.\] 
	Evidently, this matrix is of column rank at most $2$, since there are only two distinct columns. Thus, its row rank is also at most two, which implies that the corresponding three columns in $S_m$ are linearly dependent. 
\end{example}
%
%
In contrast, if each covariate has at least three support points and the joint support contains the corresponding Cartesian product, then we retain identification of $\Sigma^*$. 
\begin{theorem}\label{the:identcartprodt}
	Consider model \eqref{random:coefficient:model:intercept}. Suppose that the support of $\bW = (W_1, \dotsc, W_{p-1})^\top$ contains the Cartesian product of three points in each coordinate. Then there exist $p(p+1)/2$ support points such that the matrix $S$ in \eqref{eq:matrixident} has full rank $p(p+1)/2$ and consequently, the means and (co-)variances of the random coefficients $\bA$ are identified. Conversely, if there is a $W_j$ having only two support points, then in the full-rank covariance matrices identification fails.   
\end{theorem}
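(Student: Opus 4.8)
For the converse I would simply appeal to Example \ref{ex:twosupportpoints}: if some $W_j$ has only two support points, then among the columns of the matrix in \eqref{eq:matrixident} the three belonging to the entries $1$, $w_j^2$ and $2w_j$ of $\ov(\cdot)$ take only two distinct values across all support points and are therefore linearly dependent. This argument uses nothing about finiteness of the joint support, so $S$ (and $S_m$ for any number $m$ of points) never attains full rank $p(p+1)/2$, and by the necessity part of Theorem \ref{prop:necsuffcond} identification fails within the full-rank covariance matrices.

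For the sufficiency direction my plan is to translate the full-rank question into polynomial interpolation. Write $\f{x}=(1,\f{w}^\top)^\top\in\R^{p}$, so that the vector $\ov(\f{x})\in\R^{p(p+1)/2}$, which constitutes a row of $S$ in \eqref{eq:matrixident}, has entries $x_ix_j$ for $1\le i\le j\le p$ (up to the factors $2$ off the diagonal, which only rescale the columns of $S$ and so do not affect its rank). The linear map $c=(c_{ij})_{i\le j}\mapsto\big(\f{w}\mapsto\langle c,\ov(\f{x})\rangle\big)$ is then an isomorphism onto the space $P_{\le 2}$ of polynomials in $w_1,\dots,w_{p-1}$ of total degree at most $2$, since the monomials it produces, namely $1$, $w_k$, $w_k^2$ and $w_kw_\ell$ ($k<\ell$), form a basis of $P_{\le 2}$, and $\dim P_{\le 2}=p(p+1)/2$ matches. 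Under this identification, the constraint on $c$ imposed by a support point $\f{w}$ is exactly the vanishing of the associated polynomial at $\f{w}$. Hence the matrix $S_m$ built from all $3^{p-1}$ points of the grid $\prod_{j=1}^{p-1}\{a_j^{(1)},a_j^{(2)},a_j^{(3)}\}$ contained in the support has full column rank $p(p+1)/2$ if and only if no nonzero polynomial in $P_{\le 2}$ vanishes on the entire grid.

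The key step is then tensor-product Lagrange interpolation. Every monomial of total degree $\le 2$ has degree $\le 2$ in each variable separately, so $P_{\le 2}$ is a linear subspace of the tensor-product space $V$ of polynomials of coordinatewise degree $\le 2$, which has dimension $3^{p-1}$. Using the one-dimensional Lagrange polynomials $\ell^{(j)}_k$ (degree $\le 2$, equal to $1$ at $a_j^{(k)}$ and $0$ at the other two nodes), the products $\prod_j\ell^{(j)}_{k_j}$ lie in $V$ and evaluate to the standard basis of $\R^{3^{p-1}}$ on the grid; thus the evaluation map $V\to\R^{3^{p-1}}$ is onto and, by equality of dimensions, a bijection, in particular injective on the subspace $P_{\le 2}$. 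So a polynomial of total degree $\le 2$ vanishing on the grid is identically zero, the rows of $S_m$ attached to the $3^{p-1}$ grid points span $\R^{p(p+1)/2}$, and we may select $p(p+1)/2$ linearly independent ones. The corresponding support points make $S$ of \eqref{eq:matrixident} full rank, and identification of $\mu^*$ and $\Sigma^*$ follows from Theorem \ref{prop:necsuffcond}.

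I expect the only genuinely non-routine point to be spotting this reformulation: the naive route is to exhibit $p(p+1)/2$ explicit grid points and evaluate a determinant, which quickly becomes unwieldy. Recognizing instead that the row space of $S$ is the space of total-degree-$\le 2$ polynomials evaluated on the grid, and that this space embeds into the tensor-product space on which grid evaluation is automatically bijective, makes the argument essentially computation-free.
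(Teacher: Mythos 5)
Your proof is correct, and the sufficiency direction takes a genuinely different route from the paper's. The paper proceeds constructively: it first proves an auxiliary lemma (Lemma \ref{the:identificationsufficient}) that isolates a geometric configuration of exactly $p(p+1)/2$ points ($p$ points in general position together with collinear triples $\f{w_j},\f{w_{j,k}},\f{z_{j,k}}$), identifies the means via general position (Lemma \ref{prop:onlyvariances}), recovers each off-diagonal value $(1,\f{w_j^\top})\,\Sigma^*\,(1,\f{w_{j,k}^\top})^\top$ by a polarization identity along a line (Lemma \ref{lem:second}), propagates to all of $\Sigma^*$ by expanding in the basis $(1,\f{w_j^\top})^\top$ (Lemma \ref{lem:first}), and finally verifies by explicit enumeration that the three-point grid contains such a configuration. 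Your argument instead identifies the column space of $S_m$ with the space $P_{\le 2}$ of total-degree-$\le 2$ polynomials in $w_1,\dotsc,w_{p-1}$ (the dimension count $p(p+1)/2$ and the harmlessness of the factors $2$ both check out) and deduces injectivity of grid evaluation on $P_{\le 2}$ from bijectivity of tensor-product Lagrange interpolation on the larger space of coordinatewise-degree-$\le 2$ polynomials. This is essentially computation-free and yields the stronger conclusion that the rows indexed by the \emph{entire} $3^{p-1}$-point grid already span $\R^{p(p+1)/2}$, at the cost of not exhibiting which $p(p+1)/2$ points to take; the paper's construction, by contrast, pins down a minimal explicit subset. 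Your treatment of the converse matches the paper's (Example \ref{ex:twosupportpoints} plus the necessity part of Theorem \ref{prop:necsuffcond}), and your observation that finiteness of the joint support is not actually needed for that argument is accurate, since the rank-deficiency of the three columns indexed by $1$, $w_j^2$, $2w_j$ and the subspace-perturbation argument in Theorem \ref{prop:necsuffcond} hold for arbitrary supports.
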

%


%
%

%


%
\subsection{Partial identification} \label{sec:identification:partial}

What can be said about the covariance matrix of the random coefficients if there are binary regressors?
Assume a single binary regressor $Z$, and additional regressors $\bW \in \R^{p-2}$ (slightly modifying the notation in this section) for which the support contains a Cartesian product with at least three points in each coordinate. Our model is then written as  
\begin{equation*} 
	Y = B_0 + Z \, B_1 + \f{W^\top} \f{B_2} \,.
\end{equation*}
The set of covariance matrices of $\f{A} = (B_0, B_1, \f{B_2^\top})^\top \in \R^p$ consistent with the conditional second moments of $Y$ is
\begin{align}\label{eq:setofcov}
	\begin{split}
		\mathcal{S} \defeq & \big\{\Sigma \in \R^{p \times p} \, \big| \,  \Sigma \text{ positive semi-definite and} \\
		& \quad \quad (1,z,\f{w^\top}) \, \Sigma \, (1,z,\f{w^\top})^\top = \var(Y \, | \, Z=z,\bW=\bw)\ \forall \ (z,\bw) \in \text{supp}(Z,\bW) \big\} \,.
	\end{split}
\end{align}
Suppose that the support of $(Z,\f{W^\top})^\top \in \R^{p-1}$ has a product structure. From Theorem \ref{the:identcartprodt}, using $Z=0$ and $Z=1$ we identify the covariance matrices
\begin{equation*}
	\cov\big((B_0, \f{B_2^\top})^\top \big)\quad \text{and} \quad \cov \big((B_0+B_1, \f{B_2^\top})^\top \big) \, ,
\end{equation*}
or equivalently 
\begin{equation}\label{eq:identified}
	\cov\big((B_0, \f{B_2^\top})^\top \big)\,, \qquad \cov \big(B_1 ; \f{B_2} \big)\,,\qquad \var(B_0 + B_1) \,.
\end{equation}
Here for random vectors $\f{C}$ and $\f{D}$, $\cov(\f{C})$ is the covariance matrix of $\f{C}$, while $\cov(\f{C}; \f{D})$ contains the cross-covariances of $\f{C}$ and $\f{D}$. 
%
%
%
Sharp bounds for $\var(B_1)$ are given by
\begin{equation}\label{eq:boundscovariance}
	\inf_{\Sigma \in \mathcal{S}} \Sigma_{22} \leq \var (B_1)\, \leq \sup_{\Sigma \in \mathcal{S}} \Sigma_{22}\,,
\end{equation}
where the set $\mathcal{S}$ in \eqref{eq:setofcov} is characterized by the restrictions given by the identified parts \eqref{eq:identified} of the matrix $\Sigma^*$. These bounds can be obtained numerically by semi-definite programming. 
An interesting particular question is the potential randomness of $B_1$, which is addressed in the following proposition, which relies on the identified quantities in \eqref{eq:identified}.   
%
%
\begin{proposition}\label{prop:identifiedzero}
	Suppose that the support of $(Z,\f{W^\top})^\top$ has a product structure, and that only $Z$ is binary. 
	\begin{enumerate}[label=\arabic*.]
		\item If $\var( B_0) \not = \var(B_0 + B_1) $, or if $\cov(B_1; \f{B_2} )$ is not the zero vector, then $\var (B_1) >0$. 
		\item Conversely, suppose that  $\var( B_0) = \var(B_0 + B_1)$ and that $\cov(B_1; \f{B_2}) = \f{0}_{p-2}$.
		\begin{enumerate}
			\item If $\cov\big((B_0, \f{B_2^\top})^\top \big)$ is degenerate, and its kernel contains a vector with non-zero first coordinate, then necessarily $\var(B_1)=0$.
			\item On the other hand, if $\cov\big((B_0, \f{B_2^\top})^\top \big)$ has full rank, then the upper bound in \eqref{eq:boundscovariance} for $\var(B_1)$ is strictly positive. 
		\end{enumerate}
	\end{enumerate}

\end{proposition}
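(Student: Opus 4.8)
The plan is to parametrize the set $\mathcal{S}$ from \eqref{eq:setofcov} explicitly. Split $\f{A} = (B_0, B_1, \f{B}_2^\top)^\top$ into the three blocks $B_0$, $B_1$ and $\f{B}_2$, and for a generic $\Sigma \in \mathcal{S}$ write $M = \cov\big((B_0,\f{B}_2^\top)^\top\big)$ for the block on the coordinates $\{1,3,\dots,p\}$, which by \eqref{eq:identified} is identified, as are the cross-covariance $\f{d} = \cov(B_1;\f{B}_2)$ and the scalar $\var(B_0+B_1)$. The only entries of $\Sigma$ not pinned down by the identified data are $\Sigma_{22}=\var(B_1)$ and $\Sigma_{12}=\cov(B_0,B_1)$, and these obey the single identified linear relation $\Sigma_{11}+\Sigma_{22}+2\Sigma_{12}=\var(B_0+B_1)$; hence, once the identified quantities are fixed, $\mathcal{S}$ is a one-parameter family cut out of this family by the positive semidefiniteness constraint.

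Part 1 then follows by contraposition on the true law: if $\var(B_1)=0$ then $B_1$ is almost surely constant, so $\cov(B_1;\f{B}_2)=\f{0}$ and $\cov(B_0,B_1)=0$, the latter giving $\var(B_0+B_1)=\var(B_0)$; this contradicts either of the two hypotheses, so $\var(B_1)>0$.

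For Part 2 we are in the case $\var(B_0)=\var(B_0+B_1)$ and $\f{d}=\f{0}_{p-2}$, in which the linear relation reduces to $\Sigma_{12}=-\Sigma_{22}/2$, so that every $\Sigma\in\mathcal{S}$ has the form
\begin{equation*}
\Sigma(t)=\begin{pmatrix} \Sigma_{11} & -t/2 & \f{b}^\top \\ -t/2 & t & \f{0}^\top \\ \f{b} & \f{0} & \Gamma\end{pmatrix},\qquad t=\Sigma_{22}\geq 0,
\end{equation*}
where $\f{b}=\cov(B_0;\f{B}_2)$ and $\Gamma=\cov(\f{B}_2)$ are the blocks of $M$; note $\Sigma(0)$ is a zero-padded copy of $M$ and hence positive semidefinite, so $t=0$ is always admissible. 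For part (a), take a kernel vector $\f{u}=(u_0,\f{u}_2^\top)^\top$ of $M$ with $u_0\neq 0$ and lift it to $\f{x}=(u_0,0,\f{u}_2^\top)^\top\in\R^p$; a direct computation gives $\f{x}^\top\Sigma(t)\f{x}=\f{u}^\top M\f{u}=0$ for every $t$, so if $\Sigma(t)$ is positive semidefinite then $\f{x}$ lies in its kernel, and reading off the second coordinate of $\Sigma(t)\f{x}$ yields $-t\,u_0/2=0$, i.e.\ $t=0$. Thus every $\Sigma\in\mathcal{S}$ has $\Sigma_{22}=0$, and since $\Sigma^*\in\mathcal{S}$ we conclude $\var(B_1)=0$. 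For part (b), full rank of $M$ means $M$ is positive definite; permuting the $B_1$-coordinate to the front turns $\Sigma(t)$ into the bordered matrix with corner $t$, border $-\frac{t}{2}\f{e}_1^\top$ and trailing block $M$, which by the Schur complement criterion is positive semidefinite if and only if $t-\frac{t^2}{4}(M^{-1})_{11}\geq 0$; since $(M^{-1})_{11}>0$ this holds precisely for $0\leq t\leq 4/(M^{-1})_{11}$, whence $\sup_{\Sigma\in\mathcal{S}}\Sigma_{22}=4/(M^{-1})_{11}>0$.

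All the computations involved are elementary. The one genuine idea, and the step I expect to be the crux, is the choice of test vector in part (a): padding a kernel vector of the identified block $M$ with a zero in the $B_1$-slot so that the associated quadratic form vanishes identically in the free parameter $t$, after which positive semidefiniteness forces $t=0$.
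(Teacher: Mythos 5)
Your proof is correct, and its core is the same as the paper's: both reduce part 2 to the relation $\cov(B_0,B_1)=-\tfrac{1}{2}\var(B_1)$ and then analyze positive semidefiniteness of the resulting one-parameter family. The differences are in how semidefiniteness is exploited. In (a) the paper directly exhibits a test vector $(v_1,z,v_2,\dotsc,v_{p-1})$ with $0<z<v_1$ on which the quadratic form is negative whenever $\var(B_1)>0$, whereas you pad the kernel vector with a zero in the $B_1$-slot and invoke the fact that $\f{x}^\top\Sigma\f{x}=0$ forces $\Sigma\f{x}=\f{0}$ for positive semidefinite $\Sigma$; both are valid, and yours isolates the mechanism cleanly. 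In (b) the paper only needs the qualitative conclusion and settles for the lower bound $v^\top M v\geq\lambda_{\min}\normzq{v}$, showing the form is nonnegative for $0\leq\var(B_1)\leq 4\lambda_{\min}(M)$; your Schur-complement computation is sharper, identifying the admissible range exactly as $0\leq t\leq 4/(M^{-1})_{11}$ and hence the exact value $\sup_{\Sigma\in\mathcal{S}}\Sigma_{22}=4/(M^{-1})_{11}\geq 4\lambda_{\min}(M)$, which the paper's argument does not deliver. One presentational point: your parametrization of $\mathcal{S}$ as the family $\Sigma(t)$ implicitly uses that the identified quantities \eqref{eq:identified} pin down every entry of $\Sigma$ except $\Sigma_{22}$ and $\Sigma_{12}$, linked by the single relation $\Sigma_{11}+\Sigma_{22}+2\Sigma_{12}=\var(B_0+B_1)$; this is exactly how the paper characterizes $\mathcal{S}$, so no gap, but it is worth stating that this is where the product-support assumption and Theorem \ref{the:identcartprodt} enter.
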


\subsection{Identification of higher-order moments}

The $k^{\text{th}}$ -order mixed moments of the random vector $\f{A}$, $k \in \N$, are given by 
$$ m(k_1, \ldots, k_p) = \E\big[A_1^{k_1}\, \ldots \, A_p^{k_p} \big],\qquad k_j \in \N_0,\ k_1 + \ldots + k_p = k,$$ 
of which there are $\binom{p+k-1}{k}$ many. 
Information on the mixed moments in the linear random coefficient model $ Y = A_1 + A_2 \, W_1 + \ldots  + A_p\, W_{p-1}$ comes from the identified conditional $k^{\text{th}}$ moments  of $Y$ given $\f{W}$, 
\begin{equation}\label{eq:conditionalhighermoments}
\E[Y^k | \f{W} = \f{w}] = \E\big[\big((1,\f{w}^\top)\, \f{A} \big)^k\big].
\end{equation}
These can be represented as an inner product of $\binom{p+k-1}{k}$ - dimensional vectors, one consisting of the mixed moments $m(k_1, \ldots, k_p)$, the other with corresponding entry 
\begin{equation}\label{eq:entriesmatrix}
\binom{k}{k_1 \,  \ldots \, k_p}\, w_1^{k_2}\, \cdot \ldots \cdot w_{p-1}^{k_p},
\end{equation}
where 
$\f{w} = (w_1, \ldots, w_{p-1})$. Hence, we have analogously to the result in Theorem \ref{prop:necsuffcond} that if there are  $\binom{p+k-1}{k}$ support points $\f{w}_j$ of $\f{W}$ such that if we form the matrix with rows as in \eqref{eq:entriesmatrix} for the coordinates of the $\f{w}_j$, the resulting quadratic matrix has full rank, then the $k^{\text{th}}$ -order mixed moments of $\f{A}$ are identified. 

While we were not able to obtain a sufficient condition along the lines of Theorem \ref{the:identcartprodt}, we have the following result which guarantees identification. 

\begin{theorem}\label{th:identhigherorder}
	%
%
	If in model \eqref{random:coefficient:model:intercept}, the support of $\bW = (W_1, \dotsc, W_{p-1})^\top$ contains $p$ points $\f{w}_1, \ldots, \f{w}_p$ in general position, for which for each $j \in \{1, \ldots, k\}$ and $i_1, \ldots, i_j \in \{1, \ldots, p\}$, the vector $ (\f{w}_{i_1} + \ldots + \f{w}_{i_j})/j$ is also in the support of $\bW$.
%
%
	Then the mixed moments of $\f{A}$ up to order $k$ are identified. 
	%
\end{theorem}
%



\section{Sign-consistency of the adaptive LASSO estimator} \label{ch:fixedp}

%

In this section we derive the asymptotic variable selection properties of the adaptive LASSO in the linear random coefficient regression model \eqref{random:coefficient:model:intercept}, where we focus on estimating and selecting the variances and covariances of the random coefficients. 
	First, in Section \ref{sec:fixedparasymp} we consider an asymptotic regime with a fixed number $p$ of regressors, before turning to the moderately high-dimensional setting in which $p \to \infty$ but at a slower rate than the sample size $n$. 

The adaptive LASSO and its variable selection properties, originally introduced in \citet{zou2006adaptive}, have already been investigated intensively in the literature. For example, \citet{zou2009adaptive} consider the adaptive LASSO and an adaptive version of the elestic net in moderately high dimensions, while \citet{huang2008adaptive} investigate the high-dimensional situation with strong assumptions on the first stage estimator, and  \citet{wagener2013adaptive}  extend their approach to a heteroscedastic framework. 
Here, our contributions mainly are to deal with the residuals when estimating centered second moments of the random coefficients, and to extend the analysis of \citet{zou2009adaptive} to our setting with random coefficients.   	
	


%
%

\bigskip

We observe independent random vectors $(Y_1,\f{W_1^\top})^\top,\dotsc,(Y_n,\f{W_n^\top})^\top$  distributed according to the random coefficient regression model \eqref{random:coefficient:model:intercept}, and write
\begin{align*}
	Y_i =  B_{i,0} + \f{W_i^\top} \f{B_{i}} = \f{X_i^\top} \f{A_i} \,, \quad i =1,\dotsc,n\,,   
\end{align*}
where $\f{X_i} = ( 1, \f{W_i^\top} )^\top \in \R^p$ with $\f{W_i} \sim \f{W}$ and $\f{A_i} = (B_{i,0},\f{B_{i}^\top} )^\top \sim \f{A}$ are independent random vectors. Here $\f{X_i} = ( X_{i,1},\dotsc,X_{i,p} )^\top$ represents the observed covariates and $\f{A_i} = ( A_{i,1},\dotsc,A_{i,p})^\top$ the unobserved individual regression coefficients.\\

In the following we denote by
\begin{align*}
	S_{\sigma} &\defeq \supp\big(\sigma^* \big) = \Big\{ k \in \big\{1,\dotsc,p(p+1)/2\big\} \, \big| \, \sigma^*_k \neq 0 \Big\}\,, \qquad s_{\sigma} \defeq | S_{\sigma} |\,,
\end{align*}
the support of 
the half-vectorization $\sigma^*$ of the covariance matrix $\Sigma^*$. 
$S_{\sigma}^c \defeq \big\{1,\dotsc,p(p+1)/2\big\} \setminus S_{\sigma}$ will denote 
the relative complement of this set. 


%
%
For an estimator $\widehat{\mu}_n$ of $\mu^*$ we define the regression residuals 
$\widetilde{Y}_i \defeq Y_i - \f{X_i^\top} \widehat{\mu}_n$, 
%
and write the squared residuals as
\begin{align*}
Y_i^{\sigma} \defeq \widetilde{Y}_i^2 = \f{X_i^\top} \big( D_i - \Sigma^* + E_n + F_{n,i} \big) \f{X_i}\, ,
\end{align*}
%
where we set
\begin{align}
D_i &\defeq \big(\f{A_i} - \mu^* \big) \big(\f{A_i}- \mu^*\big)^\top, \qquad E_n \defeq \big( \mu^* - \widehat{\mu}_n \big) \big( \mu^* - \widehat{\mu}_n \big)^\top  , \label{def:Di}\\
F_{n,i} &\defeq \big( \f{A_i} - \mu^* \big) \big( \mu^* - \widehat{\mu}_n \big)^\top + \big( \mu^* - \widehat{\mu}_n \big) \big( \f{A_i} - \mu^* \big)^\top. \label{def:Fni}
\end{align}
%
Applying the half-vectorization $\ovec$ for symmetric matrices in \eqref{def:vec} and the corresponding vector transformation $\ov$ in \eqref{def:v} we obtain in vector-matrix form
\begin{align*}
	\Ysigma = \Xsigma \, \sigma^* + \epssigma = \XsigmaS \, \sigma_{S_\sigma}^* + \epssigma \,, 
\end{align*}
%
%
where
\begin{align}
	\Ysigma 
	&\defeq \Big( \big( Y_1 - \f{X_1^\top} \widehat{\mu}_n \big)^2,\dotsc,\big( Y_n - \f{X_n^\top} \widehat{\mu}_n \big)^2 \Big)^\top ,\qquad 
	\Xsigma \defeq \Big[ \ov\big(\f{X_1}\big), \dotsc, \ov\big(\f{X_n}\big) \Big]^\top  , \notag \\
	\epssigma &\defeq \Big( \ov\big(\f{X_1}\big)^\top \ovec\big( D_1 - \Sigma^* + E_n + F_{n,1} \big),\dotsc, 
	\ov\big(\f{X_n}\big)^\top \ovec\big( D_n - \Sigma^* + E_n + F_{n,n} \big) \Big)^\top.  \label{def:epssigma}
\end{align}

Then the adaptive LASSO estimator with regularization parameter $\lambda_n^{\sigma}>0$ is given by
\begin{align}
	\widehat{\sigma}_n^{\,\AL} \in \rho_{\sigma,n,\lambda_n^{\sigma}}^{\,\AL} &\defeq \underset{\beta \in \R^{p(p+1)/2} }{\arg\min} ~ \Bigg( \frac{1}{n} \, \normzq{\Ysigma - \Xsigma \, \beta} +  2 \lambda_n^{\sigma}  \sum_{k=1}^{p(p+1)/2} \frac{|\beta_k|}{\big|\widehat{\sigma}_{n,k}^{\,\init}\big|} \Bigg) \,, \label{def:adaptive:LASSO:second:moments}
\end{align}
where $\widehat{\sigma}_n^{\,\init} \in \R^{p(p+1)/2}$ is an initial estimator of $\sigma^*$. 
Note that if $\widehat{\sigma}_{n,k}^{\,\init} = 0$, we require $\beta_k=0$.

\subsection{Asymptotics for fixed dimension $p$}\label{sec:fixedparasymp}

The proofs of the results in this section are deferred to Section \ref{sec:fixedp:proofs} in the supplement. 

\begin{assumption}[Fixed $p$] \label{ass:moments:second:moments} 
	We assume that $\big(\f{X_i^\top}, \f{A_i^\top}\big)^\top$, $i=1, \ldots, n$, are identically distributed, and that 
	\begin{enumerate}[label=\normalfont{(A\arabic*)},leftmargin=9.9mm]
		\setcounter{enumi}{\value{temp1}}
		\item the random coefficients $\f{A}$ have finite forth moments, \label{moments:second:moments:B1}
		\item the covariates $\f{X}=(1,\f{W^\top})^\top$ (or rather $\f{W}$) have finite eighth moments, \label{moments:second:moments:B2}
		\item the symmetric matrix \begin{align*}
			\mathrm{C}^{\sigma} \defeq \E \Big[ \ov\big(\f{X}\big)\,\ov\big(\f{X}\big)^\top\Big] \,, 
		\end{align*}  
		which contains the fourth moments of the covariates, is positive definite. \label{moments:second:moments:B3}
		\setcounter{temp1}{\value{enumi}}
	\end{enumerate}
\end{assumption}

In the following proposition, we show that the critical third part of the assumptions follows from our identification results in Section \ref{sec:model:random:coefficients}.

\begin{proposition} \label{lemma:Csigma:positive:definite}
	Under the assumption of Theorem \ref{the:identcartprodt}, that the support of the covariate vector $\f{W}$ contains a Cartesian product with three points in each coordinate, Assumption \ref{ass:moments:second:moments}, (A3), is satisfied, that is,  
	 $\mathrm{C}^{\sigma}$ is positive definite.
\end{proposition}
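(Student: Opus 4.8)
The plan is to use that $\mathrm{C}^{\sigma}=\E\big[\ov(\f{X})\,\ov(\f{X})^\top\big]$ is automatically a positive semidefinite second-moment matrix and to verify that its kernel is trivial. First I would record that $\mathrm{C}^{\sigma}$ is well defined because the fourth moments of the covariates are finite (Assumption~\ref{ass:moments:second:moments}~(A2)), and that for every $\sigma\in\R^{p(p+1)/2}$ one has $\sigma^\top\mathrm{C}^{\sigma}\sigma=\E\big[\big(\ov(\f{X})^\top\sigma\big)^2\big]\ge 0$, with equality precisely when $\ov(\f{X})^\top\sigma=0$ almost surely. So the whole claim reduces to showing: if $\sigma\neq\f{0}$, then $\ov(\f{X})^\top\sigma$ is non-zero with positive probability.

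Fix such a $\sigma$ and let $\Sigma$ be the symmetric $p\times p$ matrix with $\ovec(\Sigma)=\sigma$. By the vectorization identity in \eqref{linear:system:identifying:variances1}, the map
\[
	g(\bw)\defeq(1,\bw^\top)\,\Sigma\,(1,\bw^\top)^\top=\ov\big((1,\bw^\top)^\top\big)^\top\sigma,\qquad \bw\in\R^{p-1},
\]
is a polynomial, hence continuous, in $\bw$. Suppose for contradiction that $g(\bW)=0$ almost surely. Then the open set $\{\bw:g(\bw)\neq 0\}$ has probability zero, so it cannot meet $\supp(\bW)$; that is, $g$ vanishes identically on $\supp(\bW)$.

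Finally I would invoke Theorem~\ref{the:identcartprodt}: since the support of $\bW$ contains a Cartesian product with three points in each coordinate, that theorem supplies support points $\bw_1,\dotsc,\bw_{p(p+1)/2}$ for which the matrix $S$ of \eqref{eq:matrixident} has full rank $p(p+1)/2$. The $j$-th coordinate of $S\sigma$ equals $\ov\big((1,\bw_j^\top)^\top\big)^\top\sigma=g(\bw_j)=0$, so $S\sigma=\f{0}$, and invertibility of $S$ forces $\sigma=\f{0}$ --- contradicting $\sigma\neq\f{0}$. Hence $\ov(\f{X})^\top\sigma$ is non-zero with positive probability, $\sigma^\top\mathrm{C}^{\sigma}\sigma>0$, and $\mathrm{C}^{\sigma}$ is positive definite. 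I expect the only mildly delicate point to be the passage from ``$g(\bW)=0$ a.s.'' to ``$g\equiv 0$ on $\supp(\bW)$'': this must be argued via continuity of the quadratic form, rather than by assigning positive probability to the individual grid points, since the points of a Cartesian product contained in $\supp(\bW)$ need not be atoms of the distribution of $\bW$. Everything else is an immediate consequence of Theorem~\ref{the:identcartprodt}.
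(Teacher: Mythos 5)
Your proof is correct, but it takes a different route from the paper's. The paper argues on the ``primal'' side: it draws $p(p+1)/2$ i.i.d.\ copies of $\f{W}$, asserts via Theorem~\ref{the:identcartprodt} that the resulting matrix $S$ is of full rank with positive probability, concludes that the empirical Gram matrix $\frac{1}{n}(\Xsigma)^\top\Xsigma$ is positive definite on an event of positive probability, and then passes to its expectation $\mathrm{C}^{\sigma}$. You instead argue on the ``dual'' side, characterizing the kernel of $\mathrm{C}^{\sigma}$: any $\sigma$ with $\sigma^\top\mathrm{C}^{\sigma}\sigma=0$ makes the quadratic form $g(\bw)=\ov\big((1,\bw^\top)^\top\big)^\top\sigma$ vanish almost surely, hence (by continuity of $g$ and the definition of the support) identically on $\supp(\bW)$, hence at the $p(p+1)/2$ points supplied by Theorem~\ref{the:identcartprodt}, forcing $S\sigma=\f{0}$ and so $\sigma=\f{0}$. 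Both arguments rest on the same input (the full-rank matrix $S$ from Theorem~\ref{the:identcartprodt}), but yours handles explicitly the delicate point you flag --- that the grid points in the support need not be atoms --- via the openness of $\{g\neq 0\}$. The paper's one-line claim that $S$ is full rank ``with positive probability'' for i.i.d.\ draws implicitly needs an argument of exactly the same flavor (the full-rank tuples form an open set containing a point of the support of the product measure), so your version makes rigorous a step the paper leaves compressed; the paper's version, in exchange, is shorter and reuses the random-design object $\frac{1}{n}(\Xsigma)^\top\Xsigma$ that appears later in the estimation theory.
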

To formulate an asymptotic result on variable selection consistency and asymptotic normality in fixed dimensions, set
\begin{align}
	\mathrm{B}^{\sigma} \defeq \E \bigg[ \Big( \ov\big( \f{X} \big)^\top \Psi^* \, \ov\big( \f{X} \big) \Big) \, \ov\big( \f{X} \big) \, \ov\big( \f{X} \big)^\top \bigg] \,, \label{def:B:second:moments}
\end{align}
	where
\begin{align*}
	\Psi^* &\defeq \Big[ \ovec\big(\mathcal{M}^{11}\big),\dotsc, \ovec\big(\mathcal{M}^{pp}\big),\ovec\big(\mathcal{M}^{12}\big),\dotsc,\ovec\big(\mathcal{M}^{1p}\big), \\ 
	&\quad \quad \quad \quad  \quad \quad  \quad \quad \ovec\big(\mathcal{M}^{23}\big), \dotsc,\ovec\big(\mathcal{M}^{2p} \big),\dotsc,\ovec\big(\mathcal{M}^{(p-1)p}\big) \Big]^\top \notag 
\end{align*} 
with $\mathcal{M}^{kl} \in \R^{p \times p}$ and 
\begin{align}
	\big( \mathcal{M}^{kl} \big)_{uv} \defeq \cov\Big( \big( A_{k} -\mu_k^* \big) \big( A_{l} -\mu_l^* \big), \big( A_{u} -\mu_u^* \big) \big( A_{v} -\mu_v^* \big) \Big) \,. \label{def:Mkl}
\end{align}

\begin{theorem}[Variable selection and asymptotic normality for fixed $p$] \label{theorem:adaptive:LASSO:second:moments} 
Suppose that the estimator $\widehat{\mu}_{n} $ of $\mu^*$ used in the residuals $\widetilde{Y}_i$ is $\sqrt{n}$-consistent, that is $\sqrt{n} \, \big( \widehat{\mu}_{n} - \mu^* \big) = \Op{1}$. 
%
	 Further, let Assumption \ref{ass:moments:second:moments} be satisfied, and assume that for the initial estimator $\widehat{\sigma}_{n}^{\,\init}$ in the adaptive LASSO $\widehat{\sigma}_n^{\,\AL}$ in \eqref{def:adaptive:LASSO:second:moments} we also have that $\sqrt{n} \, \big( \widehat{\sigma}_{n}^{\,\init} - \sigma^* \big) = \Op{1}$. If the regularization parameter is chosen as $\lambda_n^{\sigma} \to 0$, $\sqrt{n} \, \lambda_n^{\sigma} \to 0$ and $n \, \lambda_n^{\sigma} \to \infty$, then it follows that $\widehat{\sigma}_n^{\,\AL}$ is sign-consistent, 
	\begin{align}
		\mathbb{P}\Big( \sign\big(\widehat{\sigma}_n^{\,\AL}\big) = \sign\big(\sigma^*\big)\Big) \to 1 \, , \label{sign:consistency:adaptive:LASSO:second:moments}
	\end{align}
	and satisfies 
	\begin{align}
		\sqrt{n} \, \big( \widehat{\sigma}_{n,S_{\sigma}}^{\,\AL} - \sigma_{S_\sigma}^* \big) ~ \stackrel{d} \longrightarrow ~ \mathcal{N}_{s_{\sigma}} \Big( \f{0}_{s_{\sigma}} ,\big(\mathrm{C}_{S_\sigma S_\sigma}^{\sigma}\big)^{-1} \, \mathrm{B}_{S_{\sigma} S_{\sigma}}^{\sigma} \, \big(\mathrm{C}_{S_\sigma S_\sigma}^{\sigma}\big)^{-1} \Big) \, . \label{asymptotic:normality:adaptive:LASSO:second:moments}
	\end{align} 
\end{theorem}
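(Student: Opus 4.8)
\quad The plan is to follow the classical convex-objective (epi-convergence) approach to the asymptotics of $\ell_1$-penalized $M$-estimators, as used for the adaptive LASSO by \citet{zou2006adaptive} and \citet{zou2009adaptive}; the two features needing extra care here are that the ``design'' $\Xsigma$, the ``response'' $\Ysigma$ and the error $\epssigma$ all depend on the plug-in $\widehat{\mu}_n$ through the squared residuals, and that the errors $\epssigma$ are heteroscedastic, so a sandwich covariance appears. First I would record the two limiting ingredients. By the strong law of large numbers, using the eighth moments of $\f W$ from (A2),
\begin{equation*}
\tfrac1n\,(\Xsigma)^\top\Xsigma=\tfrac1n\sum_{i=1}^n\ov\big(\f{X_i}\big)\,\ov\big(\f{X_i}\big)^\top~\longrightarrow~\mathrm{C}^{\sigma}\qquad\text{almost surely,}
\end{equation*}
and $\mathrm{C}^{\sigma}$ is positive definite by (A3), so $\mathrm{C}^{\sigma}_{S_\sigma S_\sigma}$ is invertible and strictly positive definite. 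Second, I would prove the central limit statement
\begin{equation*}
\tfrac1{\sqrt n}\,(\Xsigma)^\top\epssigma~\stackrel{d}{\longrightarrow}~\f G\sim\mathcal N_{p(p+1)/2}\big(\f 0,\mathrm{B}^{\sigma}\big).
\end{equation*}

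Proving this CLT is where the residuals enter, and I expect it to be the main obstacle. Using the decomposition $D_i-\Sigma^*+E_n+F_{n,i}$ behind \eqref{def:epssigma}, the leading term is the i.i.d.\ sum $\tfrac1{\sqrt n}\sum_i\big(\ov(\f{X_i})^\top\ovec(D_i-\Sigma^*)\big)\,\ov(\f{X_i})$, which is centered because $\f{X_i}$ and $\f{A_i}$ are independent and $\E[D_i]=\Sigma^*$, and whose covariance is $\mathrm{B}^{\sigma}$: indeed $\Psi^*=\E\big[\ovec(D-\Sigma^*)\,\ovec(D-\Sigma^*)^\top\big]$, hence $\E\big[(\ov(\f X)^\top\ovec(D-\Sigma^*))^2 \mid \f X\big]=\ov(\f X)^\top\Psi^*\,\ov(\f X)$, and substituting this into the covariance reproduces exactly \eqref{def:B:second:moments}; square-integrability of the summands (so the Lindeberg--L\'evy CLT applies) follows from (A1)--(A2) by independence and Cauchy--Schwarz. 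For the two remainders I would use $\sqrt n(\widehat\mu_n-\mu^*)=\Op{1}$: the $E_n$-part equals $\sqrt n\,\big(\tfrac1n\sum_i\ov(\f{X_i})\ov(\f{X_i})^\top\big)\,\ovec(E_n)$ with $\ovec(E_n)=\Op{1/n}$, hence it is $\Op{1/\sqrt n}=\op{1}$; the $F_{n,i}$-part equals $2\,\big(\tfrac1n\sum_i\ov(\f{X_i})\,(\f{X_i^\top}(\f{A_i}-\mu^*))\,\f{X_i^\top}\big)\,\sqrt n(\mu^*-\widehat\mu_n)$, and the sample average there converges almost surely to $\E\big[\ov(\f X)\,(\f{X^\top}(\f A-\mu^*))\,\f{X^\top}\big]=\f 0$ (again by independence of $\f X$ and $\f A$ and $\E[\f A-\mu^*]=\f 0$), so this part is $\op{1}\cdot\Op{1}=\op{1}$. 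Thus $\tfrac1{\sqrt n}(\Xsigma)^\top\epssigma$ has the same weak limit $\f G$ as its leading term.

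For the penalized-objective step, write $\beta=\sigma^*+u/\sqrt n$ and let $V_n(u)$ be $n$ times the difference of the objective in \eqref{def:adaptive:LASSO:second:moments} evaluated at $\beta$ and at $\sigma^*$, so that
\begin{equation*}
V_n(u)=u^\top\Big(\tfrac1n(\Xsigma)^\top\Xsigma\Big)u-2\Big(\tfrac1{\sqrt n}(\Xsigma)^\top\epssigma\Big)^\top u+2n\lambda_n^{\sigma}\sum_{k=1}^{p(p+1)/2}\frac{\big|\sigma_k^*+u_k/\sqrt n\big|-\big|\sigma_k^*\big|}{\big|\widehat\sigma_{n,k}^{\,\init}\big|}\,.
\end{equation*}
By the two ingredients above and Slutsky, the first two terms converge jointly to $u^\top\mathrm{C}^{\sigma}u-2\f G^\top u$. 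For the penalty I would use $\sqrt n(\widehat\sigma_n^{\,\init}-\sigma^*)=\Op{1}$: if $k\in S_\sigma$ then $|\widehat\sigma_{n,k}^{\,\init}|\to|\sigma_k^*|>0$ a.s.\ while $\sqrt n(|\sigma_k^*+u_k/\sqrt n|-|\sigma_k^*|)\to u_k\sign(\sigma_k^*)$, so the $k$-th summand is $2\sqrt n\lambda_n^{\sigma}\cdot\Op{1}\to0$ since $\sqrt n\lambda_n^{\sigma}\to0$; if $k\in S_\sigma^c$ the $k$-th summand equals $2n\lambda_n^{\sigma}\,|u_k|/\big(\sqrt n\,|\widehat\sigma_{n,k}^{\,\init}|\big)$ with $\sqrt n\,|\widehat\sigma_{n,k}^{\,\init}|=\Op{1}$, which tends to $+\infty$ in probability when $u_k\neq0$ (as $n\lambda_n^{\sigma}\to\infty$) and vanishes when $u_k=0$. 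Hence $V_n\stackrel{d}{\longrightarrow}V$ pointwise (jointly over finitely many $u$), where $V(u)=u^\top\mathrm{C}^{\sigma}u-2\f G^\top u$ if $u_{S_\sigma^c}=\f0$ and $V(u)=+\infty$ otherwise; $V$ is convex and, since $\mathrm{C}^{\sigma}_{S_\sigma S_\sigma}$ is strictly positive definite, has the unique minimizer $\widehat u$ given by $\widehat u_{S_\sigma}=\big(\mathrm{C}^{\sigma}_{S_\sigma S_\sigma}\big)^{-1}\f G_{S_\sigma}$, $\widehat u_{S_\sigma^c}=\f0$. As the $V_n$ are convex (and, with probability tending to $1$, genuine convex functions on $\R^{p(p+1)/2}$, a vanishing initial component only constraining the corresponding coordinate to $0$), the standard convexity argument for argmins of convex processes gives $\arg\min V_n=\sqrt n(\widehat\sigma_n^{\,\AL}-\sigma^*)\stackrel{d}{\longrightarrow}\widehat u$; restricting to $S_\sigma$, and noting $\f G_{S_\sigma}\sim\mathcal N_{s_\sigma}(\f0,\mathrm{B}^{\sigma}_{S_\sigma S_\sigma})$, yields \eqref{asymptotic:normality:adaptive:LASSO:second:moments}, while for $k\in S_\sigma^c$ we get $\sqrt n\,\widehat\sigma_{n,k}^{\,\AL}=\op{1}$; in particular $\widehat\sigma_n^{\,\AL}-\sigma^*=\Op{1/\sqrt n}$.

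It remains to upgrade this to the sign consistency \eqref{sign:consistency:adaptive:LASSO:second:moments}, coordinate-wise. For $k\in S_\sigma$, the asymptotic normality just established (hence consistency) and $\sigma_k^*\neq0$ give $\Prob\big(\sign(\widehat\sigma_{n,k}^{\,\AL})=\sign(\sigma_k^*)\big)\to1$. For $k\in S_\sigma^c$, $\sqrt n$-convergence to $0$ does not suffice, so I would invoke the KKT (subgradient) conditions for \eqref{def:adaptive:LASSO:second:moments}: on the event $\{\widehat\sigma_{n,k}^{\,\AL}\neq0\}$ they force $\tfrac2n\big((\Xsigma)^\top(\Ysigma-\Xsigma\widehat\sigma_n^{\,\AL})\big)_k=2\lambda_n^{\sigma}\,\sign(\widehat\sigma_{n,k}^{\,\AL})/|\widehat\sigma_{n,k}^{\,\init}|$. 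Expanding the left side as $\tfrac2n\big((\Xsigma)^\top\epssigma\big)_k-\tfrac2n\big((\Xsigma)^\top\Xsigma(\widehat\sigma_n^{\,\AL}-\sigma^*)\big)_k$, it is $\Op{1/\sqrt n}$ by the CLT, the almost sure limit of the Gram matrix, and $\widehat\sigma_n^{\,\AL}-\sigma^*=\Op{1/\sqrt n}$; but $\sqrt n$ times the right side equals $2n\lambda_n^{\sigma}/\big(\sqrt n\,|\widehat\sigma_{n,k}^{\,\init}|\big)$, which diverges in probability since $n\lambda_n^{\sigma}\to\infty$ and $\sqrt n\,|\widehat\sigma_{n,k}^{\,\init}|=\Op{1}$. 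Therefore this identity can hold only on an event of probability tending to $0$, i.e.\ $\Prob(\widehat\sigma_{n,k}^{\,\AL}=0)\to1$; intersecting the finitely many coordinate-wise events gives \eqref{sign:consistency:adaptive:LASSO:second:moments}. The heteroscedasticity of $\epssigma$ causes no additional difficulty, being absorbed into $\mathrm{B}^{\sigma}$; the genuinely new work relative to standard adaptive-LASSO proofs is the error decomposition above and the two cancellations --- for $E_n$ and for the $F_{n,i}$ cross-term --- showing the plug-in $\widehat\mu_n$ is negligible on the $\sqrt n$-scale.
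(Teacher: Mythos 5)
Your proposal is correct, but it follows a genuinely different route from the paper for the core variable-selection argument. The paper proves the theorem via the primal--dual witness characterization of the adaptive LASSO (Lemma \ref{lemma:primal:dual:witness:adaptive:lasso}): it constructs the oracle solution $\widetilde{\sigma}_{n,S_\sigma}$ restricted to the true support, verifies strict dual feasibility on $S_\sigma^c$ using the rates $\sqrt{n}\lambda_n^\sigma \to 0$ and $n\lambda_n^\sigma \to \infty$, and then reads off both sign consistency and the expansion $\sqrt{n}(\widehat{\sigma}_{n,S_\sigma}^{\,\AL}-\sigma^*_{S_\sigma}) = (\tfrac1n(\XsigmaS)^\top\XsigmaS)^{-1}\tfrac{1}{\sqrt n}(\XsigmaS)^\top\delta_n + \op{1}$, to which the Lindeberg--Feller CLT is applied. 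You instead use the Knight--Fu/Zou convex-objective argument: reparametrize $\beta = \sigma^* + u/\sqrt n$, show finite-dimensional weak convergence of the rescaled objective $V_n$ to a convex limit that is $+\infty$ off $\{u_{S_\sigma^c}=0\}$, invoke argmin convergence for convex processes, and then handle $\Prob(\widehat{\sigma}_{n,k}^{\,\AL}=0)\to 1$ for $k\in S_\sigma^c$ by a KKT/subgradient contradiction. Both are standard and both work here; the part that is specific to this paper --- the decomposition of $\epssigma$ into the i.i.d.\ term driven by $D_i-\Sigma^*$, the $E_n$ term of order $\Op{1/n}$, and the $F_{n,i}$ cross term killed by $\E[\f{X}^\top(\f{A}-\mu^*)\,\ov(\f{X})\,\f{X}^\top]=\f 0$ --- is essentially identical to the paper's Lemma \ref{lemma:gradient:bounded:second:moments:2}, and your identification of $\Psi^*$ as $\E[\ovec(D-\Sigma^*)\ovec(D-\Sigma^*)^\top]$ correctly reproduces $\mathrm{B}^\sigma$. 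The trade-off: your epi-convergence route is arguably cleaner for fixed $p$ and delivers the limiting law of the full argmin in one step, whereas the paper's PDW machinery is chosen because it carries over directly to the diverging-$p$ setting of Theorem \ref{theorem:adaptive:LASSO:second:moments:growing}. Two small points to tighten: the $\sqrt n$-consistency of $\widehat{\sigma}_n^{\,\init}$ only gives $|\widehat{\sigma}_{n,k}^{\,\init}|\to|\sigma_k^*|$ in probability, not almost surely (in probability suffices for your argument); and for $k\in S_\sigma$ you should note that the event $\{\widehat{\sigma}_{n,k}^{\,\init}=0\}$, on which the penalty forces $\beta_k=0$, has vanishing probability.
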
            
We defer the proof of the theorem to the supplementary appendix, Section \ref{sec:fixedp:proofs}.

\begin{remark}[Guaranteeing a positive semi-definite matrix] 
Consider the positive semi-definite cone
\begin{align*}
	\mathbb{S}_p^+ \defeq \big\{ M \in \R^{p \times p} \mid M \text{ is symmetric and positive semi-definite} \big\} \quad \subset \R^{p \times p}\,,
\end{align*} 
and its image under the vectorization operator
	\begin{align*}
	\mathbb{V}_p^+ \defeq \big\{ \ovec(M) \mid M \in \mathbb{S}_p^+ \big\} \quad \subset \R^{\frac{p(p+1)}{2}}.
\end{align*}
It would be of interest to directly restrict the estimate of $\sigma^*$ to $\mathbb{V}_p^+$, resulting in 
\begin{align}
	\widehat{\sigma}_{n,\text{pos}}^{\,\AL} \in  \underset{\beta \in \mathbb{V}_p^+ }{\arg\min} ~ \Bigg( \frac{1}{n} \, \normzq{\Ysigma - \Xsigma \, \beta} +  2 \lambda_n^{\sigma}  \sum_{k=1}^{p(p+1)/2} \frac{|\beta_k|}{\big|\widehat{\sigma}_{n,k}^{\,\init}\big|} \Bigg) \,, \label{def:adaptive:LASSO:second:moments:posdef}
\end{align}
an actual covariance matrix. 
Computationally this estimate is feasible in principle by using methods from semidefinite programming as discussed e.g.~in \citet{vandenberghe1996semidefinite}, or by reparametrizing positive semidefinite matrices in terms of Cholesky factors and maximizing over these Cholesky factors.
However, technically it is hard to extend the primal-dual witness approach underlying the proof of Theorem \ref{theorem:adaptive:LASSO:second:moments} to this setting. Indeed, the primal-dual witness approach amounts to showing that a vector with the correct sparsity pattern asymptotically satisfies the necessary and sufficient KKT - conditions for a minimizer of \eqref{def:adaptive:LASSO:second:moments}. However, these KKT conditions become intractable  for the semindefinite problem in \eqref{def:adaptive:LASSO:second:moments:posdef}.
\end{remark}

Fortunately, we have the following result, in which some coefficients are non-random, while those which actually are random have a non-singular covariance matrix.
\begin{corollary} \label{cor:blockmatrix}
	Under the conditions of Theorem \ref{theorem:adaptive:LASSO:second:moments}, suppose that the covariance matrix of the random coefficients in \eqref{def:cov:true} has the form 
		\begin{align*}
		\Sigma^* = \begin{bmatrix}   
			\Sigma_1^* & \f{0}_{d \times (p-d)}  \\
			\f{0}_{(p-d) \times d} & \f{0}_{(p-d) \times (p-d)} 
		\end{bmatrix}
	\end{align*}
for a positive definite $d \times d$ - matrix $\Sigma_1^*$. Then $\mathbb{P}(\widehat{\sigma}_{n,\text{pos}}^{\,\AL} = \widehat{\sigma}_{n}^{\,\AL}) \to 1$, $n \to \infty$. 
\end{corollary}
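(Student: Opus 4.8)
The plan is to show that, on an event of probability tending to one, the unconstrained estimator $\widehat{\sigma}_n^{\,\AL}$ is already feasible for the constrained problem, i.e.\ $\widehat{\sigma}_n^{\,\AL} \in \mathbb{V}_p^+$, and that both problems have a \emph{unique} minimizer; the two minimizers must then coincide, which is exactly the claim $\widehat{\sigma}_{n,\text{pos}}^{\,\AL} = \widehat{\sigma}_n^{\,\AL}$.

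First I would record three high-probability events. \emph{(i) Uniqueness.} By the law of large numbers, using the eighth-moment condition \ref{moments:second:moments:B2}, $\tfrac1n (\Xsigma)^\top \Xsigma \to \mathrm{C}^{\sigma}$ in probability, and since $\mathrm{C}^{\sigma}$ is positive definite by \ref{moments:second:moments:B3} (or by Proposition \ref{lemma:Csigma:positive:definite}), $\tfrac1n (\Xsigma)^\top \Xsigma$ is positive definite with probability $\to 1$; on that event the quadratic term $\tfrac1n \normzq{\Ysigma - \Xsigma \beta}$ is strictly convex, the full objective in \eqref{def:adaptive:LASSO:second:moments} is strictly convex, so $\widehat{\sigma}_n^{\,\AL}$ is its unique minimizer, and likewise the constrained problem, being minimization of the same strictly convex function over the closed convex set $\mathbb{V}_p^+$ (the image of the closed convex cone $\mathbb{S}_p^+$ under the linear bijection $\ovec$), has a unique minimizer $\widehat{\sigma}_{n,\text{pos}}^{\,\AL}$. \emph{(ii) Support.} By sign-consistency \eqref{sign:consistency:adaptive:LASSO:second:moments}, with probability $\to 1$ one has $\widehat{\sigma}_{n,k}^{\,\AL} = 0$ for every $k \in S_{\sigma}^c$; under the assumed block form $\Sigma^* = \diag(\Sigma_1^*, \f{0})$ every $\ovec$-index lying outside the top-left $d\times d$ block belongs to $S_{\sigma}^c$, so on this event the symmetric matrix $\widehat{\Sigma}_n^{\,\AL}$ with $\ovec(\widehat{\Sigma}_n^{\,\AL}) = \widehat{\sigma}_n^{\,\AL}$ has the form $\diag(\widehat{\Sigma}_{n,1}^{\,\AL}, \f{0})$ with $\widehat{\Sigma}_{n,1}^{\,\AL} \in \R^{d\times d}$. \emph{(iii) Definiteness.} By the asymptotic normality \eqref{asymptotic:normality:adaptive:LASSO:second:moments}, the entries of $\widehat{\sigma}_n^{\,\AL}$ indexed by $S_{\sigma}$ are $\sqrt{n}$-consistent for $\sigma_{S_\sigma}^*$, and combined with (ii) (the in-block entries outside $S_\sigma$ are exactly $0 = (\Sigma_1^*)$ there) this yields $\widehat{\Sigma}_{n,1}^{\,\AL} \to \Sigma_1^*$ in probability; since $\Sigma_1^*$ is positive definite and positive definiteness is an open condition (equivalently $\lambda_{\min}$ is continuous), $\widehat{\Sigma}_{n,1}^{\,\AL}$ is positive definite with probability $\to 1$.

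On the intersection of the events (i)--(iii), $\widehat{\Sigma}_n^{\,\AL} = \diag(\widehat{\Sigma}_{n,1}^{\,\AL}, \f{0})$ is positive semi-definite, hence $\widehat{\sigma}_n^{\,\AL} = \ovec(\widehat{\Sigma}_n^{\,\AL}) \in \mathbb{V}_p^+$; thus $\widehat{\sigma}_n^{\,\AL}$ is feasible for the constrained problem, and, being the global minimizer of the objective over all of $\R^{p(p+1)/2} \supseteq \mathbb{V}_p^+$, it is a fortiori a minimizer over $\mathbb{V}_p^+$. By the uniqueness from (i), this minimizer is $\widehat{\sigma}_{n,\text{pos}}^{\,\AL}$, so $\widehat{\sigma}_{n,\text{pos}}^{\,\AL} = \widehat{\sigma}_n^{\,\AL}$ on an event of probability tending to one, proving the corollary.

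The main obstacle is step (iii): upgrading support recovery and $\sqrt{n}$-consistency into positive definiteness of the estimated $d\times d$ block. This is where the hypothesis that $\Sigma_1^*$ be positive definite, rather than merely positive semi-definite, is essential --- it supplies the ``slack'' (openness of the PD cone) that lets a consistent estimator of $\Sigma_1^*$ inherit definiteness with high probability; if $\Sigma_1^*$ were only PSD, the estimated block could slip outside $\mathbb{S}_d^+$ and the two estimators would genuinely differ. The uniqueness bookkeeping in (i) is routine but has to be spelled out, since $\widehat{\sigma}_n^{\,\AL}$ and $\widehat{\sigma}_{n,\text{pos}}^{\,\AL}$ are only defined as elements of $\arg\min$ sets.
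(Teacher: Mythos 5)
Your proposal is correct and follows essentially the same route as the paper's own (very brief) argument after the corollary: sign-consistency forces the off-block entries to zero, consistency of the in-block entries plus openness of the positive definite cone makes $\widehat{\Sigma}_{n,1}^{\,\AL}$ positive definite with probability tending to one, and hence the unconstrained minimizer is feasible for the constrained problem. Your step \emph{(i)}, establishing uniqueness of both minimizers via strict convexity of the objective on the event that the Gram matrix is positive definite, is a detail the paper leaves implicit, and it is a worthwhile addition since both estimators are only defined as elements of $\arg\min$ sets.
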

This follows from Theorem \ref{theorem:adaptive:LASSO:second:moments} since the blocks of zeros in $\Sigma^*$ are estimated as zero with probability tending to one, and the estimate for $\Sigma_1^*$ will be positive definite asymptotically with full probability, since the positive definite matrices are open in $\R^{d \times d}$. Hence the unconstrained estimator $\widehat{\sigma}_{n}^{\,\AL}$ will correspond with probability tending to $1$ to a positive semi-definite matrix, which proves the corollary. Note that the corresponding statement would not be true for the ordinary least squares estimator.

\subsection{Diverging number $p$ of parameters}\label{sec:divparasymp}

Again we shall focus on the covariance matrix, for a discussion of estimating the means see the appendix, Section \ref{sec:appendix:meandiv}. 
Recall $\mathrm{C}^{\sigma}$ and $\mathrm{B}^{\sigma}$ which are given in \ref{moments:second:moments:B3} and \eqref{def:B:second:moments}.

\begin{assumption}[Growing $p$] \label{ass:moments:second:moments:growing} 
	We assume that $\big(\f{X_i^\top}, \f{A_i^\top}\big)^\top$, $i=1, \ldots, n$, are identically distributed, and that 
	\begin{enumerate}[label=\normalfont{(A\arabic*)},leftmargin=9.9mm]
		\setcounter{enumi}{\value{temp1}}
		\item the random coefficients $\f{A}$ have finite fourth moments, \label{moments:coefficients:growing}
		\item the vector transformation $\ov(\f{X})$ of the covariates $\f{X}$ is sub-Gaussian after centering,\label{ass:regressors:subgaussian}
		\item $\Csigmal \leq \lambda_{\min} \big( \mathrm{C}^{\sigma} \big) \leq \lambda_{\max} \big( \mathrm{C}^{\sigma} \big) \leq \Csigmau
		$ for some positive constants $ 0 < \Csigmal \leq \Csigmau < \infty$, where $\lambda_{\min}(A)$ and $\lambda_{\max}(A)$ denote the minimal and maximal eigenvalues of a symmetric matrix $A$, \label{ass:second:moments:C}
		\item  $\lambda_{\max} \big(\mathrm{B}^{\sigma} \big) \leq \Bsigmau$ for some positive constant $\Bsigmau > 0$, \label{ass:second:moments:B}
		\item $\lim_{n\to \infty} p^4 / n = 0$.  \label{ass:second:moments:limit:pn}
		\setcounter{temp1}{\value{enumi}}
	\end{enumerate}
\end{assumption}

The proof of the following result is provided in Section \ref{sec:divoar:proofs}.

\begin{theorem}[Variable selection for diverging $p$] \label{theorem:adaptive:LASSO:second:moments:growing} 
	Suppose that the estimator $\widehat{\mu}_{n} $ of $\mu^*$ used in the residuals $\widetilde{Y}_i$ is $\sqrt{n/p}$-consistent, that is $\sqrt{n/p} \, \normz{ \widehat{\mu}_{n} - \mu^* } = \Op{1}$. 
	Further, let Assumption \ref{ass:moments:second:moments:growing} be satisfied, and assume that for the initial estimator $\widehat{\sigma}_{n}^{\,\init}$ in the adaptive LASSO $\widehat{\sigma}_n^{\,\AL}$ in \eqref{def:adaptive:LASSO:second:moments} we have also $\sqrt{n}/p  \, \normz{ \widehat{\sigma}_{n}^{\,\init} - \sigma^* } = \Op{1}$. Moreover, if the regularization parameter is chosen as $\lambda_n^{\sigma} \to 0$, 
\begin{align*}
	\sqrt{s_\sigma \, n} \, \lambda_n^{\sigma} \, / (\sigma_{\min}^* \, p) \to 0 \,, \quad p/(\sigma_{\min}^* \, \sqrt{n}) \to 0\,,\quad n\,\lambda_n^\sigma/p^2 \to \infty
\end{align*}  
	with $\sigma_{\min}^* \defeq \min_{k \in S_{\sigma}} |\sigma_k^*|$, then it follows that $\widehat{\sigma}_n^{\,\AL}$ is sign-consistent, 
	\begin{align}
		\mathbb{P}\Big( \sign\big(\widehat{\sigma}_n^{\,\AL}\big) = \sign\big(\sigma^*\big)\Big) \to 1 \, . \label{sign:consistency:adaptive:LASSO:second:moments:growing}
	\end{align}
\end{theorem}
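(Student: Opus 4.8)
The plan is to run the primal--dual witness (PDW) construction for the adaptive LASSO, in the spirit of \citet{zou2009adaptive} and \citet{wagener2013adaptive}, but carrying the dependence on $p$ through every bound and paying particular attention to the noise vector $\epssigma$ in \eqref{def:epssigma}. This vector is not an ordinary regression error: it carries the ``ideal'' heteroscedastic fluctuation $\ov(\f{X_i})^\top \ovec(D_i - \Sigma^*)$ together with the contributions $\ov(\f{X_i})^\top \ovec(E_n)$ and $\ov(\f{X_i})^\top \ovec(F_{n,i})$ that stem from plugging the estimated residuals $\widetilde{Y}_i$ into the squared response.

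First I would introduce the oracle estimator $\widehat{\sigma}^{\,\mathrm{o}}_n$: restrict the minimization in \eqref{def:adaptive:LASSO:second:moments} to vectors supported on $S_\sigma$, solve it, set $\widehat{\sigma}^{\,\mathrm{o}}_{n,S_\sigma^c} = \f{0}$, and take the dual certificate $\widehat{z}_n$ with $\widehat{z}_{n,k} = \sign(\widehat{\sigma}^{\,\mathrm{o}}_{n,k})/|\widehat{\sigma}^{\,\init}_{n,k}|$ for $k \in S_\sigma$ and $\widehat{z}_{n,S_\sigma^c}$ determined by the stationarity equations on $S_\sigma^c$. Writing $\widehat{\mathrm{C}}^{\sigma} \defeq \tfrac1n (\Xsigma)^\top \Xsigma$, the restricted stationarity condition reads
\begin{align*}
	\widehat{\sigma}^{\,\mathrm{o}}_{n,S_\sigma} - \sigma^*_{S_\sigma} = \big(\widehat{\mathrm{C}}^{\sigma}_{S_\sigma S_\sigma}\big)^{-1}\Big( \tfrac1n (\XsigmaS)^\top \epssigma - \lambda_n^{\sigma}\, \widehat{z}_{n,S_\sigma}\Big)\,,
\end{align*}
while on the complement $\lambda_n^{\sigma}\,\widehat{z}_{n,S_\sigma^c}$ equals the analogous expression built from the columns of $\Xsigma$ outside $S_\sigma$, minus the cross term $\widehat{\mathrm{C}}^{\sigma}_{S_\sigma^c S_\sigma}\big(\widehat{\sigma}^{\,\mathrm{o}}_{n,S_\sigma} - \sigma^*_{S_\sigma}\big)$. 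By the usual PDW uniqueness argument it suffices to show, with probability tending to one, that (a) $\widehat{\mathrm{C}}^{\sigma}_{S_\sigma S_\sigma}$ is invertible with $\lambda_{\min} \geq \Csigmal/2$; (b) $\normi{\widehat{\sigma}^{\,\mathrm{o}}_{n,S_\sigma} - \sigma^*_{S_\sigma}} = \op{\sigma_{\min}^*}$, which forces $\sign(\widehat{\sigma}^{\,\mathrm{o}}_{n,S_\sigma}) = \sign(\sigma^*_{S_\sigma})$; and (c) the strict dual feasibility $|\widehat{z}_{n,k}| < 1/|\widehat{\sigma}^{\,\init}_{n,k}|$ for every $k \in S_\sigma^c$; then $\widehat{\sigma}_n^{\,\AL} = \widehat{\sigma}^{\,\mathrm{o}}_n$ on this event, which yields \eqref{sign:consistency:adaptive:LASSO:second:moments:growing}.

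The bulk of the work is establishing (a)--(c) under the stated rates. For (a), the sub-Gaussianity of $\ov(\f{X})$ after centering together with $p^4/n \to 0$ gives $\normzM{\widehat{\mathrm{C}}^{\sigma} - \mathrm{C}^{\sigma}} = \op{1}$ via a matrix concentration bound, and the eigenvalue bound $\lambda_{\min}(\mathrm{C}^{\sigma}) \geq \Csigmal$ then transfers to $\widehat{\mathrm{C}}^{\sigma}_{S_\sigma S_\sigma}$. For (b) and (c) I would first bound the noise term: split $\epssigma_i$ into its three pieces as above, control the $\ell_2$-norm of $\tfrac1n (\XsigmaS)^\top$ applied to the ideal piece by $\Op{\sqrt{s_\sigma/n}}$ using that its second-moment structure is governed by $\mathrm{B}^{\sigma}_{S_\sigma S_\sigma}$ with maximal eigenvalue bounded by $\Bsigmau$, control the $E_n$-piece through $\normzq{\widehat{\mu}_n - \mu^*} = \Op{p/n}$ (from the $\sqrt{n/p}$-consistency) times a bounded empirical moment of $\ov(\f{X})$, and control the $F_{n,i}$-piece by Cauchy--Schwarz after extracting the factor $\normz{\widehat{\mu}_n - \mu^*} = \Op{\sqrt{p/n}}$; the residual sub-exponential chaos terms are absorbed using $p^4/n \to 0$. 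Plugging these bounds and the weight estimate $|\widehat{\sigma}^{\,\init}_{n,k}| \geq \sigma_{\min}^*/2$ on $S_\sigma$ (valid since $\normz{\widehat{\sigma}^{\,\init}_n - \sigma^*} = \Op{p/\sqrt n}$ while $p/(\sigma_{\min}^* \sqrt n) \to 0$) into the displayed identity, the conditions $p/(\sigma_{\min}^* \sqrt n) \to 0$ and $\sqrt{s_\sigma n}\,\lambda_n^{\sigma}/(\sigma_{\min}^* p) \to 0$ give (b). For (c), the decisive point is that $|\widehat{\sigma}^{\,\init}_{n,k}| \leq \normz{\widehat{\sigma}^{\,\init}_n - \sigma^*} = \Op{p/\sqrt n}$ uniformly over $k \in S_\sigma^c$, so the adaptive weights $1/|\widehat{\sigma}^{\,\init}_{n,k}|$ are of order at least $\sqrt n / p$; multiplying by $\lambda_n^{\sigma}$ and using $n\lambda_n^{\sigma}/p^2 \to \infty$ shows that $\lambda_n^{\sigma}/|\widehat{\sigma}^{\,\init}_{n,k}|$ strictly dominates both $\normi{\tfrac1n (\Xsigma)^\top \epssigma}$ and the cross term, establishing strict dual feasibility without any mutual-incoherence assumption. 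Assembling (a)--(c) completes the proof.

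The main obstacle is the noise control feeding (b) and (c): obtaining $\normi{\tfrac1n (\Xsigma)^\top \epssigma}$ and its $\ell_2$-analogue on $S_\sigma$ with the correct power of $p$, given that the coordinates of $\ov(\f{X})$ are degree-two polynomials in a sub-Gaussian vector (hence only sub-exponential) and that the $E_n$- and $F_{n,i}$-terms couple the estimation error $\widehat{\mu}_n - \mu^*$ with the heteroscedastic design. It is precisely the interaction of these bounds with $p^4/n \to 0$ and the three rate conditions on $\lambda_n^{\sigma}$ that dictates the scalings in the statement; once the noise is tamed, what remains is the by-now-standard adaptive-LASSO PDW machinery, generalized from \citet{zou2009adaptive} and \citet{zou2006adaptive} to the random-coefficient design.
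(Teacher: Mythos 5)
Your proposal follows essentially the same route as the paper's proof: the primal--dual witness characterization of the adaptive LASSO, the same decomposition of $\epssigma$ into the ideal fluctuation $\ov(\f{X_i})^\top\ovec(D_i-\Sigma^*)$ and the $E_n$- and $F_{n,i}$-contributions from the estimated residuals, the same sub-Gaussian concentration of the Gram matrix, and the same roles for the three rate conditions on $\lambda_n^{\sigma}$ (with the condition $p/(\sigma_{\min}^*\sqrt n)\to 0$ correctly identified as what turns the $\Op{p/\sqrt n}$ oracle error into a sign-preserving bound). The one step you leave as a sketch --- taming the $F_{n,i}$-piece --- is indeed where the paper invests the most technical effort (a truncation of $\normz{\f{A_i}-\mu^*}$ combined with a covering argument and sub-exponential tail bounds, yielding $\Op{p^{3/2}/n^{5/8}+p^2/n^{3/4}}$), and you correctly flag it as the main obstacle.
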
  

\begin{remark}
Additional technical issues in the proof of Theorem \ref{theorem:adaptive:LASSO:second:moments:growing}, as compared to the analysis in \citet{Zou2006}, are to deal with the residuals when estimating centered second moments of the random coefficients as well as with the heteroscedasticity of the model.] Let us also point out that under the assumptions of the theorem, the least squares estimator satisfies the requirements made on the initial estimator, 

\end{remark}

\begin{remark}
	For fixed $p$ (and $S_\sigma$) we obtain the same conditions for the choice of the regularization parameter as in Theorem \ref{theorem:adaptive:LASSO:second:moments}. Moreover, if only $S_\sigma$ is fixed, but the number of coefficients grows, the first condition on the regularization parameter in Theorem \ref{theorem:adaptive:LASSO:second:moments:growing} simplifies to $\sqrt{n} \, \lambda_n^{\sigma} \, /p \to 0$ and the second one is satisfied by  \ref{ass:second:moments:limit:pn}.
\end{remark}

\begin{remark}
Assumption \ref{ass:regressors:subgaussian} is satisfied for bounded covariates which we mainly focus on in this paper. If we merely assume a sub-Gaussian distribution for the regressor vector $\f{X}$ instead of its vector transformation $\ov(\f{X})$, we would require a result for the rate of concentration of the sample fourth moment matrix of sub-Gaussian random vectors in the spectral norm.
\end{remark}

\begin{remark} \label{remark:assumption:p2n} Assumption \ref{ass:second:moments:limit:pn} can be relaxed to $\lim_{n\to \infty} p^2 / n = 0$, which is the minimal condition so that the assumptions of Theorem \ref{theorem:adaptive:LASSO:second:moments:growing} can be satisfied, if the centered coefficients $\f{A}-\mu^*$ are sub-Gaussian as well and $\lim_{n\to \infty} n \exp(-C_p \,p) = 0$ holds for some positive constant $C_p>0$.	 See Remark \ref{rem:technicalcondagain} after the proof of Lemma \ref{lemma:gradient:bounded:second:moments:3:growing}. 
\end{remark}

\begin{remark}[Elastic net] \label{remark:elastic:net} Our results in Theorem \ref{theorem:adaptive:LASSO:second:moments:growing} should extend to the adaptive elastic net estimator, see \citet{zou2009adaptive} for an analysis of the adaptive elastic net in moderately high dimensions. The asymptotic properties should be similar to those of the adaptive LASSO, but its numerical performance may be better since the covariates in the design matrix $\Xsigma$ may be highly correlated.
\end{remark}

\section{Simulations} \label{simulations}

In this section we investigate numerically the performance of the adaptive LASSO with respect to variable selection of the variances and covariances of the random coefficients in two settings. Moreover, we consider various combinations for the sample size $n$ and the number $p$ of coefficients to study the performance for growing $p$.   

\medskip

We consider the linear random coefficient regression model \eqref{random:coefficient:model:intercept} where the first four coefficients $\big( B_0,B_1,B_2,B_3 \big)^\top \sim \mathcal{N}_4 \big(\mu_1^*,\Sigma_1^*\big)$ are normally distributed with mean vector $\mu_1^* = \big( 40,15,0,-10 \big)^\top$ and covariance matrix
\begin{align*}
	\Sigma_1^* = \begin{bmatrix}   
		10 & 15.65 & -5.20 & 0 \\
		15.65 & 50 & 0 & 12.65 \\
		-5.20 & 0 & 30 & -12.25 \\
		0 & 12.65 & -12.25 & 20 \\    
	\end{bmatrix} \,.
\end{align*}  

The exact correlations of the coefficients are $\rho_{01} = \cor(B_0,B_1) = 0.7$, $\rho_{02} = -0.3$, $\rho_{13} = 0.4$, $\rho_{23} = -0.5$ and evidently $\rho_{03} = \rho_{12} = 0$. Furthermore, we set the fifth coefficient $B_4$ equal to $20$ and add deterministic zeros for the remaining $p-5$ coefficients in model \eqref{random:coefficient:model:intercept}. Hence we obtain in total the mean vector
\begin{align*}
\mu^* = \Big(\big(\mu_1^*\big)^\top, 20, \f{0}_{(p-5)}^\top \Big)^\top
\end{align*}
and the covariance matrix
\begin{align*}
	\Sigma^* = \begin{bmatrix}   
		\Sigma_1^* & \f{0}_{4 \times (p-4)}  \\
		\f{0}_{(p-4) \times 4} & \f{0}_{(p-4) \times (p-4)} 
	\end{bmatrix}
\end{align*} 
(which equals the setting in Corollary \ref{cor:blockmatrix}) for the random coefficient vector $\f{A}$. Obviously the number $s_\sigma$ of non-zero elements in the half-vectorization $\sigma^*$ of the covariance matrix $\Sigma^*$ is always equal to $8$ for each $p \geq 5$. Moreover, the covariates $W_1,\dotsc,W_{p-1}$ in model \eqref{random:coefficient:model:intercept} are assumed to be independent and identically uniform distributed on the interval $[-1,1]$ ($\mathcal{U}[-1,1]$) or on the set $\{-1,0,1\}$ ($\mathcal{U}\{-1,0,1\}$). 

\medskip

In our numerical study we simulate $n$ pairs $(Y_1,\f{W_1^\top})^\top,\dotsc,(Y_n,\f{W_n^\top})^\top$ of data according to one of the above specified models and use them for variable selection of the second central moments of the random coefficients. For that purpose we apply the adaptive LASSO $\widehat{\sigma}_n^{\,\AL}$, which is given in \eqref{def:adaptive:LASSO:second:moments}, with the ordinary LASSO estimator as well as the least squares estimator as initial estimators $\widehat{\sigma}_n^{\,\init}$.  
To determine the residuals of the first stage mean regression we use the ordinary least squares estimator $\widehat{\mu}_n^{\,\LS}$. The adaptive LASSO is computed in our simulation by using the function \texttt{glmnet} of the eponymous package. Note that the intercept of the regression model is not penalized by this function, which means that the variance of the random intercept $B_0$ is not penalized in our setting. This is plausible since the coefficient $B_0$ includes the deterministic intercept as well as a random error which is not affected by the covariates.

\medskip

In each of the following scenarios we perform a Monte Carlo simulation with $m=10.000$ iterations to illustrate the sign-consistency of the adaptive LASSO $\widehat{\sigma}_n^{\,\AL}$ for various sample sizes, numbers of coefficients and supports for the regressors. Its regularization parameter $\lambda$ is always chosen such that the sign-recovery rate is as high as possible. For this purpose we use $1000$ independent repetitions in each scenario, run through a grid for $\lambda$ in each data set and determine the regularization parameters with a correct number of degrees of freedom.

 The average percentage of correct sign-recoveries are displayed in the subsequent Figure \ref{figure:1} for $n=5000$ and Figure \ref{figure:6} for $n=10000$ for both the least squares estimator as well as the ordinary LASSO as initial estimators, and for both choices of covariates. As the LASSO as initial estimator leads to much better selection performance, we concentrate on it in the following, where we consider in more detail the number of false positives and false negatives.

\begin{itemize}
\item[(a)] \textbf{Findings for sample size $\boldsymbol{n=5.000}$.} \\
Let us discuss the findings from Figures \ref{figure:1} - \ref{figure:5}. Evidently for both kinds of regressors the sign-recovery rate decreases if the number of coefficients increases. Note that the number of parameters which are estimated grows quadratically with the number $p$ of random coefficients since the half-vectorization $\sigma^*$ of the covariance matrix $\Sigma^*$ has dimension $p(p+1)/2$. In particular, if we consider $p=60$ coefficients in our model, we obtain $1830$ variances and covariances. Hence the results look quite satisfying, however, if the support of the regressors consists only of the three points $\{-1,0,1\}$, the sign-recovery rate is somewhat lower and decreases also slightly faster, as seen in Figure \ref{figure:1}. Second, there are rarely false positives, so that discoveries actually correspond to signals. The error in the sign recovery mainly stems from false negatives, of which there are rarely more than one, as seen in Figures \ref{figure:3} and \ref{figure:5}.

\begin{figure}[!h]       
	\centering	
	\includegraphics[width=0.7\linewidth]{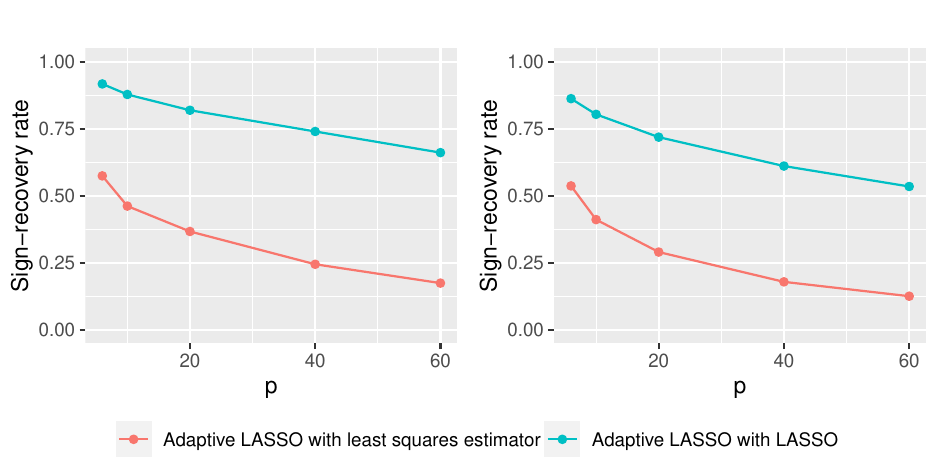}
	\caption{left chart shows the sign-recovery rate for $\mathcal{U}[-1,1]$ distributed regressors, right one for $\mathcal{U}\{-1,0,1\}$ distributed regressors. The sample size is always $n=5.000$.}
	\label{figure:1}
\end{figure}


\begin{figure}[!h]       
	\centering	
	\includegraphics[width=0.7\linewidth]{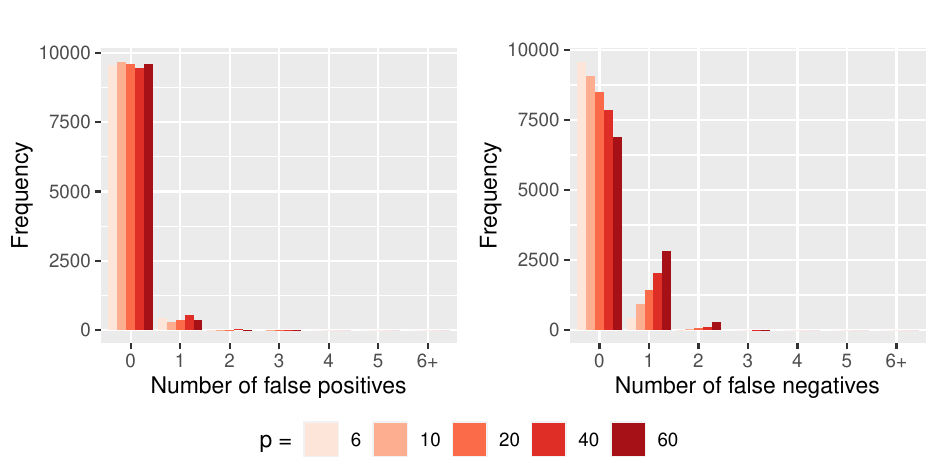}
	\caption{frequency of false positives and false negatives for adaptive LASSO with LASSO as inital estimator, $\mathcal{U}[-1,1]$ distributed regressors and sample size $n=5.000$.}
	\label{figure:3}
\end{figure}


\begin{figure}[!h]       
	\centering	
	\includegraphics[width=0.7\linewidth]{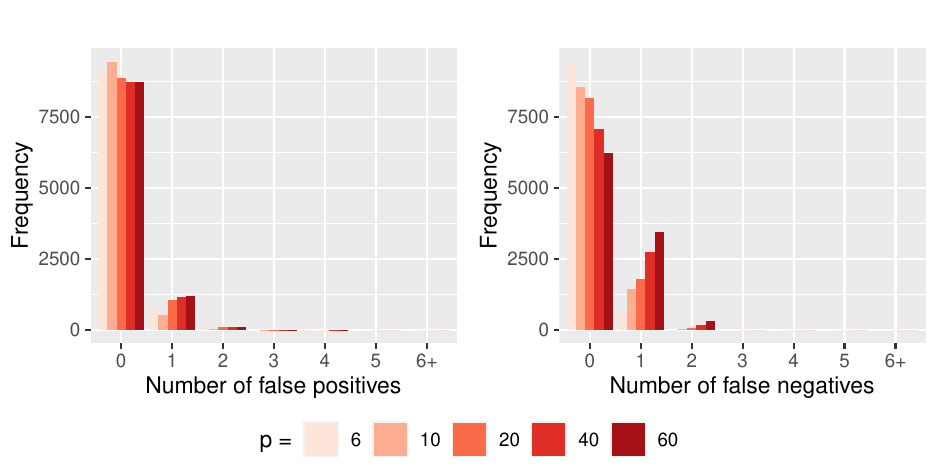}
	\caption{frequency of false positives and false negatives for adaptive LASSO with LASSO as inital estimator, $\mathcal{U}\{-1,0,1\}$ distributed regressors and sample size $n=5.000$.}
	\label{figure:5}  
\end{figure}

\item[(b)] \textbf{Sample size $\boldsymbol{n=10.000}$.} \\
In this setting the sign-recovery rate is in all scenarios much higher than in the first one with $n=5000$. In the bar charts of the false positives and false negatives there are no unexpected results to detect.

\begin{figure}[!h]       
	\centering	
	\includegraphics[width=0.7\linewidth]{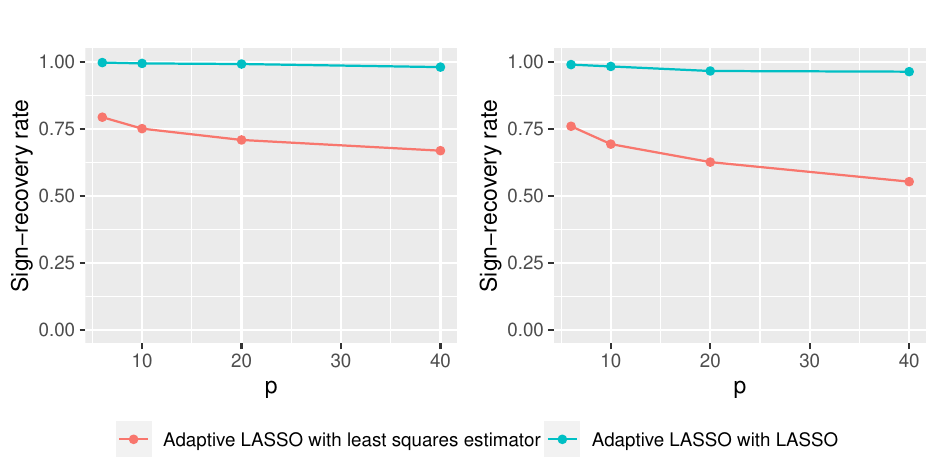}
	\caption{left chart shows the sign-recovery rate for $\mathcal{U}[-1,1]$ distributed regressors, right one for $\mathcal{U}\{-1,0,1\}$ distributed regressors. The sample size is always $n=10.000$.}
	 \label{figure:6} 
\end{figure}


\begin{figure}[!h]       
	\centering	
	\includegraphics[width=0.7\linewidth]{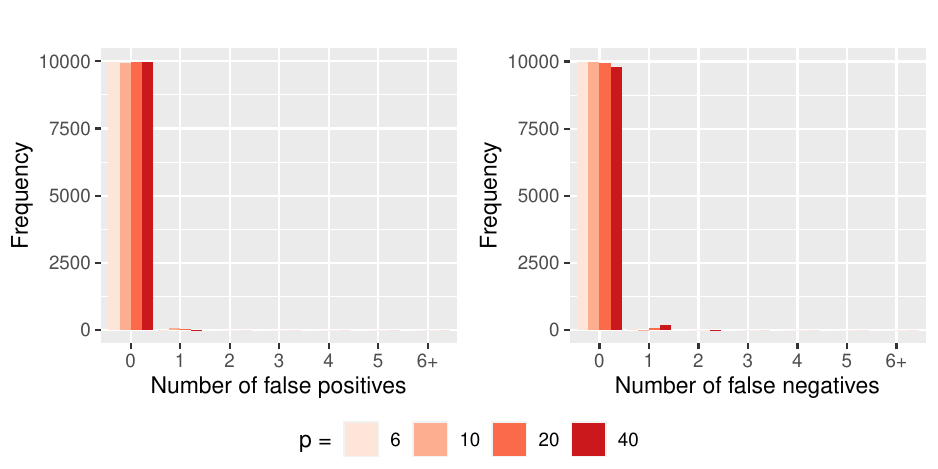}
	\caption{frequency of false positives and false negatives for adaptive LASSO with LASSO as inital estimator, $\mathcal{U}[-1,1]$ distributed regressors and sample size $n=10.000$.}
	\label{figure:8}  
\end{figure}


\begin{figure}[!h]        
	\centering	
	\includegraphics[width=0.7\linewidth]{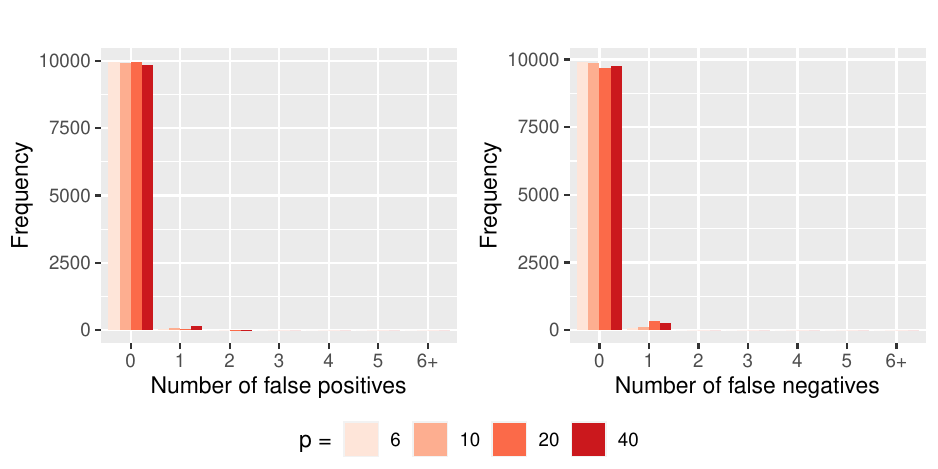}
	\caption{frequency of false positives and false negatives for adaptive LASSO with LASSO as inital estimator, $\mathcal{U}\{-1,0,1\}$ distributed regressors and sample size $n=10.000$.}
	\label{figure:10}  
\end{figure}

\end{itemize}

For comparison, we also present a figure for the sign-recovery rate for the mean in Figure  \ref{figure:11}. Here, even with the simple least squares estimator as initial estimator, the sign-recovery rate is already very high for sample size $n=5000$. 

\begin{figure}[!h]       
	\centering	
	\includegraphics[width=0.7\linewidth]{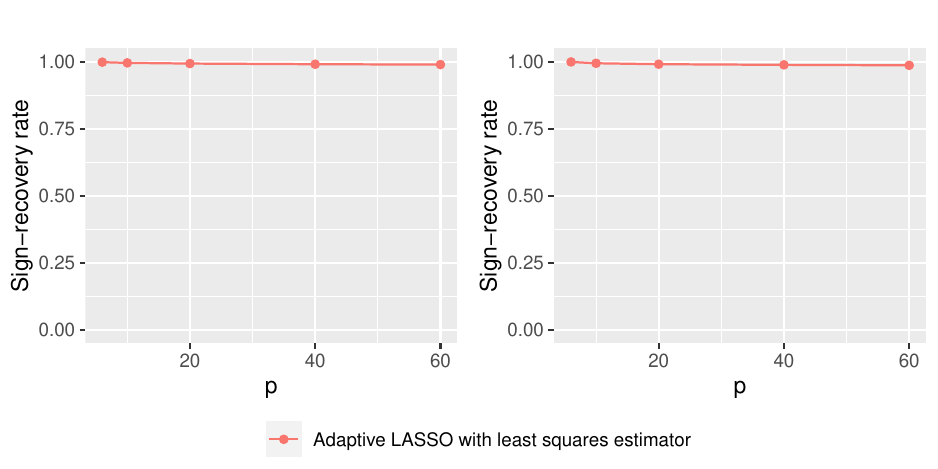}
	\caption{left chart shows the sign-recovery rate for $\mathcal{U}[-1,1]$ distributed regressors, right one for $\mathcal{U}\{-1,0,1\}$ distributed regressors. The sample size is always $n=5.000$.}
	\label{figure:11}
\end{figure}

\section{Proofs of the main results}\label{sec:mainproofs}

\subsection{Proofs for Section \ref{sec:model:random:coefficients}} \label{sec:identification:proofs}

\subsubsection*{Proofs of Propositions \ref{prop:twosupportpointsfirst} and \ref{prop:firstsimpleprop}} 

\begin{proof}[Proof of Proposition \ref{prop:twosupportpointsfirst}]
	%
	%
	Set $u = \sqrt{\var(B_1)}$. From
	$ s_2^2 = s_1^2 + u^2 + 2\, \rho \, s_1 \, u$ 
	and $| \rho| \leq 1$ we obtain the inequalities
	\[ (u-s_1)^2 \leq s_2^2 \leq (u+s_1)^2.\]
	By equating $s_2^2 = (u+s_1)^2$ we obtain the solutions $\pm s_2 - s_1$ for $u$, which yields 
	$ u \geq s_2 - s_1$ if $s_2 >s_1.$ If $s_2 \leq s_1$ we obviously have only the bound $u \geq 0$.
	Equating $s_2^2 = (u-s_1)^2$ gives the solutions $\pm s_2 + s_1$ for $u$, which yields the bounds
	\[ u \in \big[| s_1 - s_2|,s_1 + s_2 \big]\]
	for the standard deviation $u=\sqrt{\var (B_1)}$. Solving the equation at the beginning for the correlation gives $ \rho = (s_2^2 - s_1^2 - u^2)/(2\, s_1 u)$, which ranges over the whole interval $[-1,1]$ if $s_2 > s_1$. If $s_1 \geq s_2$, the correlation must be negative, and 
	maximizing the above expression for $\rho$ over $u$ yields $u = \sqrt{s_1^2 - s_2^2}$, and finally the upper bound in (\ref{eq:boundcorrelation}). 
\end{proof}


\begin{proof}[Proof of Proposition \ref{prop:firstsimpleprop}] 
	It is enough to show that all mixed moments of order $n$ are identified from $n+1$ support points, the claim then follows by induction. By model \eqref{eq:randomcoefficients1} we obtain
	\begin{align*}
		\E\big[Y^n \, \big| \, W_1=w \big] = \E\big[ (B_0 + w \,B_1 )^n \big] = \sum_{k=0}^n \binom{n}{k}\, w^k \,\E\big[ B_0^{n-k}\, B_1^{k} \big].
	\end{align*}

	If $W$ has distinct support points $w_1, \dotsc, w_{n+1}$, we obtain a linear system for the moments $\E\big[ B_0^{n-k}\, B_1^{k} \big]$, $k=0, \dotsc, n$. Its design matrix satisfies
	\begin{align*}
		\det \Bigg(\binom{n}{k-1} w_j^{k-1} \Bigg)_{j,k \in \{1, \dotsc, n+1 \}} & = \prod_{l=0}^n \binom{n}{l}\ \det \Big(w_j^{k-1} \Big)_{j,k \in \{1, \dotsc, n+1\}}\\
		& = \prod_{l=0}^n \binom{n}{l}\  \prod_{1 \leq j < k \leq n+1} (w_k - w_j) \not=0 \,,
	\end{align*}
	so that the solution is unique. In the last equation we used the determinant of the Vandermonde matrix.
\end{proof}
%
%
%

\subsubsection*{Proof of Theorem \ref{prop:necsuffcond}} 

The proof needs some preparations. Recall that points $\f{w_1}, \dotsc, \f{w_{d}} \in \R^{d-1}$ are said to be in general position if $\sum_{k=1}^{d} \alpha_k \f{w_k} = \f{0}_{d-1}$ for $\alpha_k \in \R$, $\sum_{k=1}^{d} \alpha_k = 0$, implies that $\alpha_1 = \dotsc = \alpha_{d}=0$. The following result is well-known. 

\begin{lemma}\label{lem:generalposition}
	Points $\f{w_1}, \dotsc, \f{w_{d}} \in \R^{d-1}$ are in general position if and only if one of the following conditions holds.
	\begin{enumerate}[label=\arabic*.]
		\item $\f{w_2} - \f{w_1},\dotsc,  \f{w_{d}} - \f{w_1}$ are linearly independent.
		\item For each $ j \in \{1, \dotsc, d\}$ the point $\f{w_j}$ is not contained in $\big\{ \sum_{k=1,k \not= j}^{d} \alpha_k \f{w_k} \mid \sum_{k=1,k \not= j}^{d} \alpha_k = 1 \big\}$, the hyperplane generated by $\f{w_k}, k \not=j$. 
	\end{enumerate}
\end{lemma}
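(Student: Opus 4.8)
The plan is to prove the chain of equivalences (general position) $\Leftrightarrow$ 1. $\Leftrightarrow$ 2., treating everything as elementary linear algebra of affine combinations. The one identity I will use repeatedly is that a relation $\sum_{k=1}^{d}\alpha_k\f{w_k}=\f{0}_{d-1}$ with $\sum_{k=1}^{d}\alpha_k=0$ is, after substituting $\alpha_1=-\sum_{k=2}^{d}\alpha_k$, exactly the same as the relation $\sum_{k=2}^{d}\alpha_k(\f{w_k}-\f{w_1})=\f{0}_{d-1}$, and conversely every linear dependence among the $d-1$ differences $\f{w_k}-\f{w_1}$ extends (by defining $\alpha_1$ this way) to such an $\alpha$.

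For (general position) $\Leftrightarrow$ 1.: by the identity just described, the tuples $\alpha$ appearing in the definition of general position are in bijection, via $\alpha_1=-\sum_{k\ge 2}\alpha_k$, with the coefficient tuples $(\alpha_2,\dots,\alpha_d)$ of linear combinations of $\f{w_2}-\f{w_1},\dots,\f{w_d}-\f{w_1}$, and one tuple vanishes iff the other does. Hence ``the only admissible $\alpha$ is $\f{0}_{d}$'' says precisely that these $d-1$ differences are linearly independent, which, there being $d-1$ of them in $\R^{d-1}$, is the same as their forming a basis. This settles the first equivalence and, as a by-product, shows that general position does not really single out the index $1$: using any $\f{w_j}$ as base point gives an equivalent condition.

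For (general position) $\Leftrightarrow$ 2.\ I will argue the contrapositive in each direction. If 2.\ fails, there is a $j$ and coefficients $\beta_k$, $k\neq j$, with $\sum_{k\neq j}\beta_k=1$ and $\f{w_j}=\sum_{k\neq j}\beta_k\f{w_k}$; putting $\alpha_j=-1$ and $\alpha_k=\beta_k$ otherwise gives $\sum_k\alpha_k=0$, $\sum_k\alpha_k\f{w_k}=\f{0}_{d-1}$ and $\alpha_j\neq 0$, so general position fails. Conversely, if general position fails, take a nontrivial admissible $\alpha$ and an index $j$ with $\alpha_j\neq 0$; dividing the relation by $-\alpha_j$ exhibits $\f{w_j}=\sum_{k\neq j}(-\alpha_k/\alpha_j)\f{w_k}$ with $\sum_{k\neq j}(-\alpha_k/\alpha_j)=1$, so $\f{w_j}$ lies in the hyperplane generated by the remaining points and 2.\ fails. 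Combining the two equivalences completes the proof.

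The argument is entirely routine and I do not expect a genuine obstacle; the only points that need a little care are keeping straight the affine constraint $\sum=1$ in 2.\ against the homogeneous constraint $\sum=0$ in the definition, and noting (as above) that condition 1., although phrased with the distinguished base point $\f{w_1}$, is in fact symmetric in the $\f{w_k}$ because it is equivalent to the manifestly symmetric notion of general position. This symmetry is what allows 1.\ and 2., each of which treats the indices asymmetrically, to be compared by routing through general position as an intermediary.
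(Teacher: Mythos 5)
Your proof is correct. Note that the paper itself offers no proof of this lemma---it is stated as ``well-known''---so there is no argument to compare against; your chain of equivalences (general position $\Leftrightarrow$ 1.\ via the substitution $\alpha_1=-\sum_{k\ge 2}\alpha_k$, and general position $\Leftrightarrow$ 2.\ by contraposition in both directions) is the standard elementary argument and handles the homogeneous-versus-affine normalization of the coefficients correctly.
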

%


\begin{lemma}\label{prop:onlyvariances}
	If the support of $\bW$ contains $p$ points $\f{w_1}, \dotsc, \f{w_p} \in \R^{p-1}$ in general position, then the means $\mu^*=\E [\bA ]$ are identified. 
\end{lemma}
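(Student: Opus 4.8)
The plan is to exploit the fact that the conditional mean $\E[Y \mid \bW = \bw] = (1, \bw^\top) \mu^*$ is identified for every $\bw$ in the support of $\bW$, and in particular at the $p$ points $\f{w_1}, \dotsc, \f{w_p}$ in general position. This gives a linear system of $p$ equations
\[
(1, \f{w_j}^\top)\, \mu^* = \E[Y \mid \bW = \f{w_j}]\,, \qquad j = 1, \dotsc, p\,,
\]
for the $p$ unknown entries of $\mu^*$. The claim reduces to showing that the coefficient matrix of this system, namely the $p \times p$ matrix whose $j$-th row is $(1, \f{w_j}^\top)$, is invertible.

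First I would recall the characterization of general position from Lemma \ref{lem:generalposition}: the points $\f{w_1}, \dotsc, \f{w_p} \in \R^{p-1}$ are in general position if and only if $\f{w_2} - \f{w_1}, \dotsc, \f{w_p} - \f{w_1}$ are linearly independent in $\R^{p-1}$. Then I would argue that invertibility of the matrix with rows $(1, \f{w_j}^\top)$ is equivalent to this condition. Concretely, subtracting the first row from each of the others (a row operation that does not change the determinant) turns the matrix into one whose first row is $(1, \f{w_1}^\top)$ and whose remaining rows are $(0, (\f{w_j} - \f{w_1})^\top)$ for $j = 2, \dotsc, p$; expanding the determinant along the first column shows it equals $\det\big[\f{w_2} - \f{w_1}, \dotsc, \f{w_p} - \f{w_1}\big]^\top$, which is nonzero precisely when those difference vectors are linearly independent, i.e. when the points are in general position.

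Since the system therefore has a unique solution, and $\mu^*$ is a solution, $\mu^*$ is identified. The only mild subtlety — and the one place warranting a sentence of care rather than a genuine obstacle — is ensuring we are invoking identification of the \emph{conditional mean} $\E[Y \mid \bW = \bw]$ from the distribution of the observables, which is immediate since $\bw$ lies in the support of $\bW$; and noting that $\E[Y \mid \bW = \bw] = (1,\bw^\top)\E[\bA] = (1,\bw^\top)\mu^*$ uses only the independence of $\bA$ and $\bW$ together with the existence of first moments. Everything else is elementary linear algebra, so there is no real hard step here; the lemma is a stepping stone toward the second-moment identification results.
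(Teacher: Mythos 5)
Your proposal is correct and follows essentially the same route as the paper: set up the linear system $(1,\f{w_j^\top})\,\mu^* = \E[Y \mid \bW = \f{w_j}]$, subtract the first row from the others, and invoke Lemma \ref{lem:generalposition} (item 1) to conclude that the resulting design matrix is invertible. No gaps.
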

\begin{proof}[Proof of Lemma  \ref{prop:onlyvariances}] 
	The design matrix of the linear system $\E[ Y \, | \, \bW = \f{w_j}] = \E [B_0] + \f{w_j^\top} \E [\f{B}]$, $j=1,\dotsc,p$, has the same rank as the matrix  
	%
	%
	\[ \begin{bmatrix} 
		1 & \f{w_1^\top} \\ 0 & \f{w_2^\top} - \f{w_1^\top}\\ \vdots & \vdots  \\ 0 & \f{w_p^\top} - \f{w_1^\top}
	\end{bmatrix}\,,\] 
	which is invertible by Lemma \ref{lem:generalposition}. 
	%
\end{proof}
\begin{proof}[Proof of Theorem \ref{prop:necsuffcond}] 
	Suppose that $S$ is of full rank. Since $S$ contains the matrix
	\[ \begin{bmatrix} 1 & 2 \f{w_1^\top} \\ \vdots & \vdots  \\ 1 & 2 \f{w_{p(p+1)/2}^\top} \end{bmatrix} \quad \in \R^{\frac{p(p+1)}{2} \times p}\] 
	as a submatrix, in order for $S$ to have full rank, it is necessary that this submatrix has rank $p$. This implies that there are $p$ points among the support points $\f{w_1},\dotsc, \f{w_{p(p+1)/2}}$ in general position, thus identifying the means by Lemma \ref{prop:onlyvariances}. Then, the linear system which determines $\var(Y \, | \, \bW = \f{w_j})$ in terms of the entries of $\Sigma^*$ has full-rank design matrix $S$, see \eqref{linear:system:identifying:variances1}, thus identifying $\Sigma^*$ from the conditional variances. 

	Conversely, let $m=p(p+1)/2$. Suppose that the condition is not satisfied, then all support points $\bw$ of $\bW$ are such that the vectors $\ov \big( (1, \f{w^\top} )^\top \big)$ are contained in an $(m-1)$-dimensional linear subspace $V$ of $\R^m$. The $p\times p$-dimensional positive semi-definite matrices form a convex cone with interior consisting of positive definite matrices in the space of all $p \times p$-dimensional symmetric matrices. The image under the map $\ovec$ is thus a convex cone $\mathcal{C} \subset \R^m$ with non-empty interior in $\R^m$. 
	
	Let $\f{z}$ be a unit vector orthogonal to $V$, and let $Z$ be the $p \times p$-dimensional symmetric matrix for which $\ovec(Z)=\f{z}$. Since the positive definite matrices are open in the space of all $p \times p$-dimensional symmetric matrices, given a positive definite matrix $\Sigma^*$, for small $\epsilon>0$ the matrix $\Sigma_1 = \Sigma^* + \epsilon \,Z$ will still be positive definite, and hence a covariance matrix. Moreover, it is $\ovec(\Sigma_1) = \ovec(\Sigma^*) + \epsilon \, \ovec(Z) = \ovec(\Sigma^*) + \epsilon \,\f{z}$ and $(1, \f{w^\top})  \, Z \, (1, \f{w^\top})^\top  = \ov \big( (1, \f{w^\top} )^\top \big)^\top\, \f{z} = 0$ for $\bw$ in the support of $\bW$ by construction. Hence the conditional variances $(1, \f{w^\top}) \, \Sigma^* \, (1, \f{w^\top})^\top$ and $(1, \f{w^\top}) \, \Sigma_1 \, (1, \f{w^\top})^\top$ will be the same over the support of $\bW$. Thus, for normally distributed $\bA \sim \mathcal{N}_p(\f{0}_p,\Sigma^*)$ or $\bA \sim \mathcal{N}_p(\f{0}_p,\Sigma_1)$, the conditional normal distributions of $Y \, | \, \bW = \bw$ will coincide, showing nonidentifiability.
\end{proof}
\subsubsection*{Proof of Theorem \ref{the:identcartprodt}} 
%

%
%
For the proof of the theorem, we require the following lemma. 
\begin{lemma}\label{the:identificationsufficient}
	Suppose that the support of $\bW$ in \eqref{random:coefficient:model:intercept} contains points satisfying the following properties.
	\begin{enumerate}[label=\arabic*.]
		\item  The $p$ points $\f{w_1}, \dotsc,\f{w_p} \in \R^{p-1}$ are in general position.
		\item  For each $j \in \{1, \dotsc, p \}$ there exist points $\f{w_{j,1}}, \dotsc, \f{w_{j,p-1}} \in \R^{p-1}$, possibly equal to those in 1., such that 
		\begin{itemize}
			\item $\f{w_j},\f{w_{j,1}}, \dotsc, \f{w_{j,p-1}}$ are in general position,
			\item for each $j \in \{1, \dotsc, p\}$, $k \in \{1, \dotsc, p-1\}$ there is a $\f{z_{j,k}} \in \R^{p-1}$ for which $\f{w_j},\f{w_{j,k}},\f{z_{j,k}}$ are all distinct but generate only a one-dimensional affine space, i.e. are all contained in a line. 
		\end{itemize}
	\end{enumerate}
	Then the design matrix $S$ in \eqref{eq:matrixident} formed from all the points $\f{w_j},\f{w_{j,k}},\f{z_{j,k}}$ has full rank $p(p+1)/2$ and hence,  the mean vector $\mu^*$ and the covariance matrix $\Sigma^*$ of the random coefficients $\f{A}$ are identified. 
\end{lemma}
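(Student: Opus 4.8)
The plan is to show that the family of vectors $\ov\big((1,\f{w}^\top)^\top\big)$, with $\f{w}$ ranging over all the points $\f{w_j},\f{w_{j,k}},\f{z_{j,k}}$, spans $\R^{p(p+1)/2}$; then $p(p+1)/2$ of them can be chosen linearly independent, producing a matrix $S$ as in \eqref{eq:matrixident} of full rank (equivalently, by the remark after Theorem~\ref{prop:necsuffcond}, the matrix built from all the points has rank $p(p+1)/2$). Via the bijection $\ovec(V)\leftrightarrow V$ between $\R^{p(p+1)/2}$ and symmetric $p\times p$ matrices and the identity $\ov(\f{x})^\top\ovec(V)=\f{x}^\top V\f{x}$, a vector $v=\ovec(V)$ is orthogonal to all these vectors exactly when the quadratic polynomial
\[
q(\f{w})\defeq(1,\f{w}^\top)\,V\,(1,\f{w}^\top)^\top = V_{11}+2\beta^\top\f{w}+\f{w}^\top\Gamma\f{w}
\]
vanishes at every point $\f{w_j},\f{w_{j,k}},\f{z_{j,k}}$, where $V_{11}\in\R$ is the top-left entry of $V$, $\beta\in\R^{p-1}$ is its off-diagonal block, and $\Gamma\in\R^{(p-1)\times(p-1)}$ is its (symmetric) lower-right block. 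So it suffices to show that these vanishing conditions force $V=\f{0}$, i.e. $q\equiv 0$.

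First I would exploit the collinear triples. Fix $j\in\{1,\dotsc,p\}$ and $k\in\{1,\dotsc,p-1\}$ and restrict $q$ to the line through $\f{w_j}$ and $\f{w_{j,k}}$, parametrised by $t\mapsto\f{w_j}+t(\f{w_{j,k}}-\f{w_j})$; the restriction is a univariate polynomial in $t$ of degree at most two. Since $\f{z_{j,k}}$ lies on this line and $\f{w_j},\f{w_{j,k}},\f{z_{j,k}}$ are three distinct points, the restriction vanishes at three distinct values of $t$, hence is identically zero; in particular its coefficient of $t$ vanishes, giving $(\beta+\Gamma\f{w_j})^\top(\f{w_{j,k}}-\f{w_j})=0$.

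Now vary $k$ with $j$ fixed: since $\f{w_j},\f{w_{j,1}},\dotsc,\f{w_{j,p-1}}$ are in general position, Lemma~\ref{lem:generalposition} shows that $\f{w_{j,1}}-\f{w_j},\dotsc,\f{w_{j,p-1}}-\f{w_j}$ is a basis of $\R^{p-1}$, so the last identity forces $\beta+\Gamma\f{w_j}=\f{0}$. This holds for each $j$, hence $\Gamma(\f{w_j}-\f{w_1})=\f{0}$ for all $j$; and since $\f{w_1},\dotsc,\f{w_p}$ are in general position, $\f{w_2}-\f{w_1},\dotsc,\f{w_p}-\f{w_1}$ is a basis of $\R^{p-1}$, whence $\Gamma=\f{0}$. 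Then $\beta=\beta+\Gamma\f{w_1}=\f{0}$, and $V_{11}=q(\f{w_1})=0$, so $V=\f{0}$. This proves $S$ has full rank; identification of $\Sigma^*$ then follows from the full-rank linear system \eqref{linear:system:identifying:variances1}, and identification of $\mu^*$ from Lemma~\ref{prop:onlyvariances} applied to $\f{w_1},\dotsc,\f{w_p}$.

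I do not expect a serious obstacle; the point requiring attention is that, although the third collinear point for each pair $(j,k)$ is precisely what forces $q$ to vanish identically along the corresponding line, the information one actually uses is the vanishing of the \emph{linear} coefficient $(\beta+\Gamma\f{w_j})^\top(\f{w_{j,k}}-\f{w_j})$ rather than that of the leading coefficient $(\f{w_{j,k}}-\f{w_j})^\top\Gamma(\f{w_{j,k}}-\f{w_j})$, since the vanishing of a quadratic form along a basis would not by itself yield $\Gamma=\f{0}$.
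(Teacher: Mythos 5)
Your proof is correct, but it runs in the opposite direction from the paper's. The paper argues in the primal: it first identifies $\mu^*$ via Lemma~\ref{prop:onlyvariances} (as you do), and then \emph{reconstructs} $\Sigma^*$ explicitly --- Lemma~\ref{lem:second} (a polarization-type identity for the three affinely dependent lifted vectors $(1,\f{w_j^\top})^\top,(1,\f{w_{j,k}^\top})^\top,(1,\f{z_{j,k}^\top})^\top$) recovers each cross term $(1,\f{w_j^\top})\,\Sigma^*\,(1,\f{w_{j,k}^\top})^\top$ from the identified quadratic forms, and Lemma~\ref{lem:first} then extends these bilinear values over the basis $(1,\f{w_1^\top})^\top,\dotsc,(1,\f{w_p^\top})^\top$ to pin down $\Sigma^*$, which forces $S$ to have full rank. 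You instead work in the dual: you show the kernel of $S^\top$ is trivial by taking a symmetric $V$ with $\ov\big((1,\f{w}^\top)^\top\big)^\top\ovec(V)=(1,\f{w}^\top)V(1,\f{w}^\top)^\top=0$ at all the points, restricting the quadratic polynomial to the lines through the collinear triples (three distinct roots of a degree-two univariate polynomial kill it), and then using the two general-position hypotheses to conclude $\Gamma=\f{0}$, $\beta=\f{0}$, $V_{11}=0$. The geometric input is identical --- three collinear points determine a quadratic on a line, and general position supplies bases of difference vectors --- so the two proofs are essentially dual formulations of one argument; yours is slightly more self-contained (it does not need Lemmas~\ref{lem:first} and~\ref{lem:second} and establishes the rank statement directly, covering the case of more than $p(p+1)/2$ points without appeal to the remark after Theorem~\ref{prop:necsuffcond}), while the paper's version has the merit of being constructive, exhibiting how $\Sigma^*$ is actually recovered from the conditional variances. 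Your closing observation --- that it is the vanishing of the linear coefficient $(\beta+\Gamma\f{w_j})^\top(\f{w_{j,k}}-\f{w_j})$ and not of the leading coefficient $(\f{w_{j,k}}-\f{w_j})^\top\Gamma(\f{w_{j,k}}-\f{w_j})$ that does the work, since a quadratic form can vanish on a basis without being zero --- is exactly the right point to flag and is the dual counterpart of why Lemma~\ref{lem:second} targets the cross terms.
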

%

The minimal number of support points required in this lemma is $p + p(p-1)/2 = p(p+1)/2$, which corresponds to the number of free parameters in $\Sigma^*$.
For the proof of Lemma \ref{the:identificationsufficient} we first need the following two preliminary lemmas. 

\begin{lemma}\label{lem:first}
	Suppose that $\Sigma$ is a $p \times p$-dimensional symmetric matrix and $\f{v_1}, \dotsc, \f{v_p} \in \R^{p}$ is a known basis of $\R^{p}$. If $\bv \in \R^{p}$ and $\f{v^\top} \Sigma \, \f{v_j}$, $1 \leq j \leq p$, is identified, then $\f{v^\top} \Sigma \, \f{u}$ is identified for any vector $\f{u} \in \R^{p}$. In particular, $\Sigma$ is identified from the values $\f{v_j^\top} \Sigma \, \f{v_k}$, $1 \leq j \leq k \leq p$. 
\end{lemma}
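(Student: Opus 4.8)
The plan is to reduce everything to the bilinearity of the map $(\f{x},\f{y}) \mapsto \f{x}^\top \Sigma \, \f{y}$ together with the fact that $\f{v_1}, \dotsc, \f{v_p}$ being a \emph{known} basis means every vector has a known coordinate expansion in it. Write $V = [\f{v_1}, \dotsc, \f{v_p}] \in \R^{p\times p}$ for the invertible matrix whose columns are the basis vectors; then $V$ and $V^{-1}$ are known quantities.

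First I would establish the first assertion. Given any $\f{u} \in \R^{p}$, there are unique coefficients $c_1, \dotsc, c_p \in \R$ with $\f{u} = \sum_{j=1}^p c_j \f{v_j}$, namely $(c_1, \dotsc, c_p)^\top = V^{-1} \f{u}$, and these depend only on $\f{u}$ and the known basis. By linearity,
\[
\bv^\top \Sigma \, \f{u} \;=\; \sum_{j=1}^p c_j \, \bv^\top \Sigma \, \f{v_j},
\]
which is a known linear combination of the identified numbers $\bv^\top \Sigma \, \f{v_j}$, $1 \le j \le p$. Hence $\bv^\top \Sigma \, \f{u}$ is identified for every $\f{u}$.

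For the ``in particular'' claim I would argue as follows. By symmetry of $\Sigma$, the data $\{\f{v_j}^\top \Sigma \, \f{v_k} : 1 \le j \le k \le p\}$ determine $\f{v_j}^\top \Sigma \, \f{v_k}$ for all pairs $j,k$, i.e., they determine the full matrix $V^\top \Sigma V$, whose $(j,k)$ entry is exactly $\f{v_j}^\top \Sigma \, \f{v_k}$. Since $V$ is known and invertible, $\Sigma = (V^{-1})^\top (V^\top \Sigma V) V^{-1}$ is then identified. Equivalently, one may apply the first assertion with $\bv$ replaced by each $\f{v_j}$ to see that $\f{v_j}^\top \Sigma \, \f{u}$ is identified for all $j$ and all $\f{u}$, and then expand the left argument as well to conclude that $\f{u}'^\top \Sigma \, \f{u}$ is identified for all $\f{u}',\f{u} \in \R^p$.

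Since the whole argument is just a change of basis for a bilinear form, I do not expect any genuine obstacle; the only point worth a moment's care is invoking the symmetry of $\Sigma$ to pass from the upper-triangular data $\{\f{v_j}^\top \Sigma \, \f{v_k} : j \le k\}$ — which is $p(p+1)/2$ numbers, matching the parameter count used elsewhere in the paper — to the full matrix $V^\top \Sigma V$.
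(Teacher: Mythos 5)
Your proof is correct and follows essentially the same route as the paper: expand in the known basis and use bilinearity for the first claim, then use symmetry of $\Sigma$ to recover all pairwise values $\f{v_j^\top}\Sigma\,\f{v_k}$ and invert the change of basis for the second. Writing the second step as $\Sigma = (V^{-1})^\top (V^\top \Sigma V) V^{-1}$ is just a matrix-form restatement of the paper's coordinate computation with the unit vectors $e_k$.
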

\begin{proof}[Proof of Lemma \ref{lem:first}] 
	Given $\bu \in \R^{p}$ we may write $\bu = \sum_{j=1}^{p} \lambda_j \f{v_j}$ with $\lambda_1,\dotsc,\lambda_p \in \R$. Then 
	\[ \f{v^\top} \Sigma \, \f{u} = \sum_{j=1}^{p} \lambda_j \f{v^\top} \Sigma \,\f{v_j} \,,\]
	showing the first claim. For the second, let $e_k = (0, \dotsc, 0,1,0,\dotsc, 0)^\top$ denote the $k^{\text{th}}$ unit vector in $\R^{p}$. By assumption, one may write $e_k = \sum_{j=1}^{p} \lambda_{k,j} \f{v_j}$, where $\lambda_{k,j} \in \R$ and $1 \leq k \leq p$. Then 
	\[ \Sigma_{kl} = e_k^\top \Sigma \, e_l = \sum_{j_1, j_2 = 1}^{p} \lambda_{k,j_1} \lambda_{l,j_2}  \f{v_{j_1}^\top} \Sigma \, \f{v_{j_2}}\,.\]
	The result follows from the assumptions and the symmetry of $\Sigma$. 
\end{proof}
\begin{lemma}\label{lem:second}
	Let $\f{v_1}, \f{v_2}, \f{v_3} \in \R^{p}$ be such that each pair is linearly independent, but all three are linearly dependent, so that $\f{v_3} = \lambda_1 \f{v_1} + \lambda_2 \f{v_2}$, where $\lambda_1, \lambda_2 \not=0$. Then for a $p \times p$-dimensional symmetric matrix $\Sigma$ it holds that
	\[ \f{v_1^\top} \Sigma \, \f{v_2} = \frac{1}{2\, \lambda_1 \lambda_2} \Big( \f{v_3^\top} \Sigma \, \f{v_3} - \lambda_1^2 \,\f{v_1^\top} \Sigma \, \f{v_1} - \lambda_2^2 \, \f{v_2^\top} \Sigma \, \f{v_2} \Big) \,.\]
\end{lemma}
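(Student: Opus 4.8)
\textbf{Proof proposal for Lemma \ref{lem:second}.}

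The plan is to simply expand the quadratic form associated with $\f{v_3}$ using the given linear relation. Since $\f{v_3} = \lambda_1 \f{v_1} + \lambda_2 \f{v_2}$, bilinearity of the map $(\f{x},\f{y}) \mapsto \f{x}^\top \Sigma \, \f{y}$ gives
\[
	\f{v_3^\top} \Sigma \, \f{v_3} = \lambda_1^2 \, \f{v_1^\top} \Sigma \, \f{v_1} + \lambda_2^2 \, \f{v_2^\top} \Sigma \, \f{v_2} + \lambda_1 \lambda_2 \big( \f{v_1^\top} \Sigma \, \f{v_2} + \f{v_2^\top} \Sigma \, \f{v_1} \big).
\]
Because $\Sigma$ is symmetric, $\f{v_2^\top} \Sigma \, \f{v_1} = \f{v_1^\top} \Sigma \, \f{v_2}$, so the last term equals $2 \lambda_1 \lambda_2 \, \f{v_1^\top} \Sigma \, \f{v_2}$. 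Rearranging and dividing by $2 \lambda_1 \lambda_2$, which is legitimate since $\lambda_1, \lambda_2 \neq 0$ by hypothesis, yields the claimed identity. The pairwise linear independence of $\f{v_1}, \f{v_2}, \f{v_3}$ is exactly what forces both $\lambda_1 \neq 0$ and $\lambda_2 \neq 0$ in the relation $\f{v_3} = \lambda_1 \f{v_1} + \lambda_2 \f{v_2}$ (if one coefficient vanished, two of the vectors would be parallel), so the hypotheses are consistent and the division is valid; this is the only point worth checking and it is immediate.

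There is essentially no obstacle here: the statement is a one-line algebraic identity, and the role of the lemma in the larger argument is what matters. Combined with Lemma \ref{lem:first}, it shows that whenever the support of $\bW$ provides a collinear triple $\f{v_1}, \f{v_2}, \f{v_3}$ of the relevant lifted vectors together with enough points for the diagonal quadratic forms, the off-diagonal quantity $\f{v_1^\top} \Sigma^* \f{v_2}$ becomes identified from conditional variances alone. So in the write-up I would keep the proof to the three displayed lines above and simply remark that the coefficients $\lambda_1, \lambda_2$ are automatically nonzero under the stated pairwise-independence assumption.
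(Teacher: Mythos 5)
Your proof is correct and is exactly the paper's argument (the paper just says ``plug in the expression for $\f{v_3}$ and compute''), written out explicitly via bilinearity and the symmetry of $\Sigma$. The added observation that pairwise linear independence forces $\lambda_1,\lambda_2\neq 0$ is accurate and harmless, though the paper simply assumes it.
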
   
\begin{proof}[Proof of Lemma \ref{lem:second}] 
	Plug in the expression for $\f{v_3}$ and compute the right side of the equation. 
\end{proof}

\begin{proof}[Proof of Lemma \ref{the:identificationsufficient}] 
	By Lemma \ref{prop:onlyvariances} and the first assumption the means $\mu^*$ are identified. 
	Hence we obtain the equations \eqref{linear:system:identifying:variances}
	or equivalently \eqref{linear:system:identifying:variances1} with $\bw$ ranging over the support points mentioned in the statement of the lemma. 
	To show that the design matrix $S$ in \eqref{eq:matrixident} has full rank $p (p+1)/2$, it suffices to show that from  these equations one can uniquely solve for $\sigma^*$. To this end, from the second assumption, for $j \in \{ 1, \dotsc, p \}$ and $k \in \{1, \dotsc, p-1\}$, letting $\f{v_1} = (1,\f{w_j^\top})^\top$, $\f{v_2} = (1,\f{w_{j,k}^\top})^\top$ and $\f{v_3} = (1,\f{z_{j,k}^\top})^\top$ in Lemma \ref{lem:second} we identify $(1,\f{w_j^\top}) \, \Sigma^* \, (1,\f{w_{j,k}^\top})^\top$. Since $(1,\f{w_j^\top}) \, \Sigma^* \, (1,\f{w_j^\top})^\top$ are also identified, from the first part in Lemma \ref{lem:first} we identify $(1,\f{w_j^\top}) \, \Sigma^* \, (1,\f{w_l^\top})^\top$, $j,l \in \{1, \dotsc, p \}$. Hence from the second part of that lemma and the first assumption $\Sigma^*$ itself is identified.
\end{proof}
\begin{proof}[Proof of Theorem \ref{the:identcartprodt}] 
	For the sufficiency, suppose that the support of $W_j$ contains $\{w_{j,k},\ k=1,2,3\}$, $j=1, \dotsc, p-1$. We apply Lemma \ref{the:identificationsufficient} with
	\begin{itemize}
		\item $\f{w_j} = (w_{1,1}, \dotsc, w_{j-1,1}, w_{j,2}, w_{j+1,1},\dotsc, w_{p-1,1})^\top$, $j = 1, \dotsc, p-1$, and \linebreak $\f{w_p} = (w_{1,1}, \dotsc, w_{p-1,1})^\top$,
		\item for $j \in \{1, \dotsc, p-1\}$, let $\f{w_{j,k}}$, $k \in \{1, \dotsc, p-1\}$, $k \not=j$, enumerate the points having $k^{\text{th}}$ coordinate $w_{k,2}$ and $j^{\text{th}}$ coordinate  $w_{j,2}$, otherwise coordinates $w_{i,1}$, the corresponding $\f{z_{j,k}}$ having $k^{\text{th}}$ coordinate $w_{k,3}$, $j^{\text{th}}$ coordinate  $w_{j,2}$, otherwise coordinates $w_{i,1}$. Furthermore, let $\f{w_{j,j}} = \f{w_{p}}$ and let $\f{z_{j,j}}$ have $j^{\text{th}}$ coordinate $w_{j,3}$, otherwise $w_{i,1}$,
		\item let $\f{w_{p,k}} = \f{w_k}$, $k \in \{1, \dotsc, p-1\}$, and $\f{z_{p,k}} =(w_{1,1}, \dotsc, w_{k-1,1}, w_{k,3}, w_{k+1,1},\dotsc, w_{p-1,1})^\top$. 
	\end{itemize}
	The requirements of the lemma are then easily checked by applying Lemma \ref{lem:generalposition}, 1. The necessity of at least three support points in each coordinate, if $\Sigma^*$ has full rank, is clear from Example \ref{ex:twosupportpoints}.
\end{proof}

\subsubsection{Proofs for Section \ref{sec:identification:partial}} \label{sec:proofs:identification:partial}

%
\begin{proof}[Proof of Proposition \ref{prop:identifiedzero}] 
The claims in 1.~are clear. 
For 2., in both cases, from $\var(B_0) = \var(B_0 + B_1)$ we get that 
\[\var(B_1) = -2\,\cov(B_0,B_1)\,.\]
Since the covariance matrix $\cov\big((B_0,B_1,\f{B_2^\top})^\top\big)$ is positive semi-definite, setting $ s = \var(B_1)$, we hence require
\begin{equation}\label{eq:formpositive}
	s\,(z^2 - z v_1) + \f{v^\top}  \cov\big((B_0,\f{B_2^\top})^\top \big) \, \f{v} \geq 0
\end{equation}
for any $z \in \R$, $\bv = (v_1, \dotsc, v_{p-1})^\top \in \R^{p-1}$. 
\begin{enumerate}
\item[(a)] Choose $\bv\in \R^{p-1}$ in the kernel of $\cov\big((B_0,\f{B_2^\top})^\top \big)$ for which $v_1 >0$. If we assume $s>0$, then for $ 0 < z < v_1$ the form in (\ref{eq:formpositive}) would be negative. Hence $s=0$ in this case. 
\item[(b)] If $\cov\big((B_0,\f{B_2^\top})^\top \big)$ has full rank, then for the minimal eigenvalue $\lambda_{\min}>0$ of \linebreak $\cov\big((B_0,\f{B_2^\top})^\top \big)$ we have that
\[ \f{v^\top}  \cov\big((B_0,\f{B_2^\top})^\top \big) \, \f{v} \geq \lambda_{\min} \, \normzq{v} \,.\]
Therefore, the form in (\ref{eq:formpositive}) is positive definite for $0 \leq s \leq 4\, \lambda_{\min}$.
\end{enumerate}

\end{proof}
\begin{proof}[Proof of Theorem \ref{th:identhigherorder}]
By \eqref{eq:conditionalhighermoments}, the symmetric multilinear form
$$ u(\f{v}_1, \ldots, \f{v}_k) = \E\big[(\f{A}^\top \f{v}_1) \cdot \ldots \cdot (\f{A}^\top \f{v}_k)\big], \qquad \f{v}_j \in \R^p,\quad j=1, \ldots, k,$$
is identified over the diagonal 
$$\tilde u(\f{v}) = u(\f{v}, \ldots, \f{v})$$
 for $\f{v}^\top  = (1, \f{w}^\top)$ with $\f{w}$ in the support of $\f{W}$. 

We shall show that the symmetric multilinear form $u$ is identified. Then, inserting unit vectors $(0, \ldots, 0, 1, 0, \ldots, 0)$ yields the $k^{\text{th}}$ -order mixed moments.

By multilinearity, it sufficies to show that $u$ is identified over a basis of $\R^p$, that is, there exists a basis $\f{v_1}, \ldots, \f{v_p}$ of $\R^p$ such that $u(\f{v}_{i_1}, \ldots, \f{v}_{i_k})$ is identified for all choices $i_j \in \{1, \ldots, p\}$. To this end, we use the polarization formula for symmetric multilinear forms \citep[formula (7) ]{thomas2014polarization}, which we write as  
\begin{equation}\label{eq:polarizationmultilinear}
 u(\f{v}_1, \ldots, \f{v}_k) = \frac{1}{k!}\, \sum_{j=1}^k\, (-1)^{k-j}\, \sum_{\{i_1, \ldots i_j \} \subseteq \{1, \ldots, k\}} \,  j^k\, \tilde u \big((\f{v}_{i_1} +  \ldots +  \f{v}_{i_j})/j \big).
\end{equation}
Now for $\f{w}_1, \ldots, \f{w}_p$ as in the assumption of the theorem, the vectors $(1,\f{w}_i^\top)$, $i=1, \ldots, p$ are linearly independent by the proof of Lemma \ref{prop:onlyvariances}, and for $j \in \{1, \ldots, k\}$ and $i_1, \ldots, i_j \in \{1, \ldots, p\}$ we have  
$$ \big( (1,\f{w}_{i_1}^\top) + \ldots + (1,\f{w}_{i_j}^\top)\big)/j = \big(1,(\f{w}_{i_1} + \ldots + \f{w}_{i_j})^\top/j\big)$$
with $(\f{w}_{i_1} + \ldots + \f{w}_{i_j})^\top/j$ in the support of $\f{W}$. Hence the terms on the right of \eqref{eq:polarizationmultilinear} are identified for $k$ (not necessarily distinct) vectors $(1,\f{w}_i^\top)$, thus also the form $u$.  
\end{proof}


\subsection{Proofs for Section \ref{sec:divparasymp}} \label{sec:divoar:proofs}

\subsubsection*{Proof of Theorem \ref{theorem:adaptive:LASSO:second:moments:growing}}

Consider the following decomposition of the error term \eqref{def:epssigma}:
	\begin{align}\label{eq:errordecomp}
	\epssigma = \delta_n  + \zeta_n + \xi_n 
\end{align}
with 
\begin{align}
	\delta_n &\defeq \Big( \ov\big(\f{X_1}\big)^\top \ovec\big( D_1 - \Sigma^* \big),\dotsc, \ov\big(\f{X_n}\big)^\top \ovec\big( D_n - \Sigma^* \big) \Big)^\top \, ,\label{def:deltan} \\
	\zeta_n &\defeq \Big( \ov\big(\f{X_1}\big)^\top \ovec\big( E_n \big),\dotsc,\ov\big(\f{X_n}\big)^\top \ovec\big( E_n \big) \Big)^\top \, , \notag \\  
	\xi_n &\defeq \Big( \ov\big(\f{X_1}\big)^\top \ovec\big( F_{n,1} \big),\dotsc,\ov\big(\f{X_n}\big)^\top \ovec\big( F_{n,n} \big) \Big)^\top \,. \notag 
\end{align}

The matrices $D_1,\dotsc,D_n,E_n,F_{n,1},\dotsc,F_{n,n}$ are defined in \eqref{def:Di} and \eqref{def:Fni}. 

For the proof of Theorem \ref{theorem:adaptive:LASSO:second:moments:growing} we need the following auxiliary lemmas.

\begin{lemma} \label{lemma:gradient:bounded:second:moments:1:growing} 
	%
	Set $Z_n^{\sigma,1} = \frac{1}{n} \, \big(\Xsigma\big)^\top \delta_n$, then $\big\| Z_n^{\sigma,1} \big\|_2 = \Op{p/\sqrt{n}} $. 
\end{lemma}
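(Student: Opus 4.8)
The plan is to control $Z_n^{\sigma,1}$ in mean square and then to invoke Markov's inequality. Recall that $\delta_n = (\delta_{n,1},\dots,\delta_{n,n})^\top$ is the centred ``oracle'' part of the noise vector $\epssigma$ from \eqref{def:epssigma}, with
\[
\delta_{n,i} \;=\; \ov\big(\f{X_i}\big)^\top \ovec\big(D_i - \Sigma^*\big) \;=\; \f{X_i^\top}\big(D_i - \Sigma^*\big)\f{X_i}, \qquad D_i = \big(\f{A_i}-\mu^*\big)\big(\f{A_i}-\mu^*\big)^\top,
\]
cf.\ \eqref{def:Di}. Since $\f{A_i}$ (hence $D_i$) is independent of $\f{X_i}$ and $\E[D_i] = \Sigma^*$, we have $\E[\delta_{n,i}\mid \f{X_i}] = \ov(\f{X_i})^\top\ovec(\E[D_i]-\Sigma^*) = 0$, so $\E[\ov(\f{X_i})\,\delta_{n,i}] = 0$. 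Writing $(\Xsigma)^\top\delta_n = \sum_{i=1}^n \ov(\f{X_i})\,\delta_{n,i}$ and using independence of the pairs $(\f{X_i},\f{A_i})$ across $i$, the cross terms in $\E\,\normzq{Z_n^{\sigma,1}}$ vanish and, by the iid assumption,
\[
\E\,\normzq{Z_n^{\sigma,1}} \;=\; \frac{1}{n^2}\sum_{i=1}^n \E\Big[\normzq{\ov(\f{X_i})}\,\delta_{n,i}^2\Big] \;=\; \frac{1}{n}\,\E\Big[\normzq{\ov(\f{X})}\,\xi^2\Big], \qquad \xi \defeq \ov(\f{X})^\top\ovec\big(D-\Sigma^*\big),
\]
for a generic copy $(\f{X},\f{A})$ with $D = (\f{A}-\mu^*)(\f{A}-\mu^*)^\top$.

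Next I would condition on $\f{X}$. The vector $\ovec(D)-\sigma^*$ is independent of $\f{X}$, has mean zero, and its covariance matrix is exactly the matrix $\Psi^*$ assembled from the blocks $\mathcal{M}^{kl}$ of \eqref{def:Mkl}, since $(\mathcal{M}^{kl})_{uv} = \cov\big((A_k-\mu_k^*)(A_l-\mu_l^*),(A_u-\mu_u^*)(A_v-\mu_v^*)\big)$. Hence $\E[\xi^2\mid \f{X}] = \ov(\f{X})^\top\Psi^*\,\ov(\f{X})$, and using $\normzq{\ov(\f{X})} = \tr\big(\ov(\f{X})\ov(\f{X})^\top\big)$ together with linearity and cyclicity of the trace,
\[
\E\,\normzq{Z_n^{\sigma,1}} \;=\; \frac{1}{n}\,\E\Big[\tr\big(\ov(\f{X})\ov(\f{X})^\top\big)\,\ov(\f{X})^\top\Psi^*\ov(\f{X})\Big] \;=\; \frac{1}{n}\,\tr\big(\mathrm{B}^{\sigma}\big),
\]
with $\mathrm{B}^{\sigma}$ as in \eqref{def:B:second:moments}. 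Finally, $\mathrm{B}^{\sigma}$ is positive semi-definite of size $p(p+1)/2$, so $\tr(\mathrm{B}^{\sigma}) \le \frac{p(p+1)}{2}\,\lambda_{\max}(\mathrm{B}^{\sigma}) \le \frac{p(p+1)}{2}\,\Bsigmau$ by Assumption \ref{ass:second:moments:B}; thus $\E\,\normzq{Z_n^{\sigma,1}} = \Oop{p^2/n}$, and Markov's inequality gives $\normz{Z_n^{\sigma,1}} = \Op{p/\sqrt{n}}$.

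The computation itself is short; the point that needs care is that $\delta_n$ and $\Xsigma$ are \emph{not} independent, both being built from the $\ov(\f{X_i})$, so one cannot factor the expectation directly. The conditional-mean-zero identity $\E[\delta_{n,i}\mid \f{X_i}]=0$ is what rescues the argument: it kills the cross terms and collapses the diagonal terms to the conditional variance $\ov(\f{X})^\top\Psi^*\ov(\f{X})$, after which recognising the expectation as $\tr(\mathrm{B}^{\sigma})$ and bounding it via Assumption \ref{ass:second:moments:B} supplies the clean $p^2$ dimensional dependence (recall $p(p+1)/2$ is the ambient dimension of $\sigma^*$). Finiteness of all expectations involved follows from the finite fourth moments of $\f{A}$ in \ref{moments:coefficients:growing} and the sub-Gaussianity of $\ov(\f{X})$ in \ref{ass:regressors:subgaussian}.
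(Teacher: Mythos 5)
Your proof is correct and follows essentially the same route as the paper: both reduce $\E\,\normzq{Z_n^{\sigma,1}}$ via the conditional-mean-zero property of $\delta_n$ given $\Xsigma$ to $\tfrac{1}{n}\tr(\mathrm{B}^{\sigma})$ (the paper in matrix/trace form with $\Omega_n^{\sigma}$, you coordinate-wise), then bound the trace by $\tfrac{p(p+1)}{2}\,\Bsigmau$ using Assumption (A4) and conclude with Markov's inequality. No gaps.
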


\begin{proof}[Proof of Lemma \ref{lemma:gradient:bounded:second:moments:1:growing}]
	It is
	\begin{align*}
		\E \Big[ \normzq{Z_n^{\sigma,1}} \Big] &= \frac{1}{n^2} \, \E \Big[ \delta_n^\top \Xsigma \, (\Xsigma)^\top \delta_n \Big] = \frac{1}{n^2} \, \E \Big[ \tr \big( (\Xsigma)^\top \delta_n \, \delta_n^\top \Xsigma \big) \Big] \\
		&= \frac{1}{n^2} \, \E \Big[ \tr \big( (\Xsigma)^\top \E\big[ \delta_n \, \delta_n^\top \, \big| \,\Xsigma \big] \, \Xsigma \big) \Big] =   \frac{1}{n} \, \tr \Bigg( \E \bigg[ \frac{1}{n} \, (\Xsigma)^\top \Omega_n^{\sigma} \,\Xsigma \bigg] \Bigg) \,,
	\end{align*}
	where $\Omega_n^{\sigma} = \cov\big( \delta_n \, \big| \,\Xsigma \big)$ is given in Lemma \ref{lemma:gradient:bounded:second:moments:1}. It is obvious that
	\begin{align*}
		\E \bigg[ \frac{1}{n} \, (\Xsigma)^\top \Omega_n^{\sigma} \,\Xsigma \bigg] = \mathrm{B}^\sigma\,,
	\end{align*}
	and hence we obtain by Assumption \ref{ass:second:moments:B} the estimate
	\begin{align*}
		\E \Big[ \normzq{Z_n^{\sigma,1}} \Big] &= \frac{\tr \big( \mathrm{B}^\sigma \big)}{n} \leq \frac{\lambda_{\max} \big(\mathrm{B}^{\sigma} \big) \, p(p+1)}{2n} \leq \frac{\Bsigmau \, p(p+1)}{2n} \,. 
	\end{align*}
	Markov's inequality implies the assertion.
\end{proof}

\begin{lemma} \label{lemma:gradient:bounded:second:moments:2:growing} 
	%
	Set $Z_n^{\sigma,2} = \frac{1}{n} \, \big(\Xsigma\big)^\top \zeta_n$, then $\big\| Z_n^{\sigma,2} \big\|_2 = \Op{p/n} $. 
\end{lemma}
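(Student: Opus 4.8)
The plan is to exploit the special algebraic structure of $\zeta_n$: in the decomposition of the error vector $\epssigma$ in \eqref{def:epssigma}, $\zeta_n$ collects the contribution of the rank-one matrices $E_n = \big(\mu^* - \widehat{\mu}_{n}\big)\big(\mu^* - \widehat{\mu}_{n}\big)^\top$ from \eqref{def:Di}, so that $\zeta_{n,i} = \ov\big(\f{X_i}\big)^\top \ovec(E_n)$ for $i=1,\dots,n$. The first step is to rewrite $Z_n^{\sigma,2}$ in a factorized form. Using the quadratic-form identity $\f{x}^\top M \f{x} = \ov(\f{x})^\top \ovec(M)$ for symmetric $M \in \R^{p\times p}$ underlying \eqref{linear:system:identifying:variances1}, with $M = E_n$, together with the fact that the rows of $\Xsigma$ are the $\ov\big(\f{X_i}\big)^\top$, one gets
\begin{align*}
	Z_n^{\sigma,2} \;=\; \frac1n \big(\Xsigma\big)^\top \zeta_n \;=\; \frac1n \sum_{i=1}^n \ov\big(\f{X_i}\big)\,\ov\big(\f{X_i}\big)^\top\,\ovec(E_n) \;=\; \frac1n \big(\Xsigma\big)^\top \Xsigma\;\ovec(E_n)\,.
\end{align*}

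By submultiplicativity of the operator norm, $\normz{Z_n^{\sigma,2}} \leq \normzM{\tfrac1n \big(\Xsigma\big)^\top \Xsigma}\,\normz{\ovec(E_n)}$, and I would bound the two factors separately. Since $E_n$ is rank one, the second factor satisfies $\normz{\ovec(E_n)} \leq \normFM{E_n} = \normz{\widehat{\mu}_{n} - \mu^*}^2$, which is $\Op{p/n}$ by the assumed $\sqrt{n/p}$-consistency of $\widehat{\mu}_{n}$. For the first factor, $\tfrac1n\big(\Xsigma\big)^\top\Xsigma = \tfrac1n \sum_{i=1}^n \ov\big(\f{X_i}\big)\ov\big(\f{X_i}\big)^\top$ is the empirical counterpart of $\mathrm{C}^{\sigma} = \E\big[\ov(\f{X})\ov(\f{X})^\top\big]$ from \ref{moments:second:moments:B3}, whose largest eigenvalue is bounded by $\Csigmau$ under \ref{ass:second:moments:C}; it then suffices to establish the operator-norm concentration $\normzM{\tfrac1n\big(\Xsigma\big)^\top\Xsigma - \mathrm{C}^{\sigma}} = \op{1}$, which yields $\normzM{\tfrac1n\big(\Xsigma\big)^\top\Xsigma} \leq \Csigmau + \op{1} = \Op{1}$ and hence $\normz{Z_n^{\sigma,2}} = \Op{1}\cdot\Op{p/n} = \Op{p/n}$.

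The only non-routine ingredient, and the main obstacle, is that concentration of the sample fourth-moment matrix of the covariates. I would use \ref{ass:regressors:subgaussian}: writing $\ov\big(\f{X_i}\big) = \E[\ov(\f{X})] + V_i$ with $V_i$ centered and sub-Gaussian in $\R^{d}$, $d = p(p+1)/2$, one splits $\tfrac1n\big(\Xsigma\big)^\top\Xsigma - \mathrm{C}^{\sigma}$ into the centered sample-covariance term $\tfrac1n\sum_{i=1}^n V_iV_i^\top - \E[VV^\top]$, bounded in operator norm at the standard sub-Gaussian rate $\Op{\sqrt{d/n}+d/n}$, and the two cross terms $\E[\ov(\f{X})]\big(\tfrac1n\sum_{i=1}^n V_i\big)^\top$ and its transpose, of operator norm $\normz{\E[\ov(\f{X})]}\,\normz{\tfrac1n\sum_{i=1}^n V_i} = \Op{p^2/\sqrt n}$ (using that the entries of $\E[\ov(\f{X})]$ are bounded, so $\normz{\E[\ov(\f{X})]} = \Oop{\sqrt d}$, and that $\E\normz{\tfrac1n\sum_{i=1}^n V_i}^2 = \Oop{d/n}$). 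All three contributions are $\op{1}$ precisely when $p^4/n\to 0$, i.e.\ under \ref{ass:second:moments:limit:pn} --- which is exactly why that growth rate is imposed. This is consistent with the remark following the theorem: because \ref{ass:regressors:subgaussian} posits sub-Gaussianity of $\ov(\f{X})$ rather than merely of $\f{X}$, a second-moment concentration bound for $\ov(\f{X})$ suffices here, instead of a spectral-norm concentration result for sample fourth-moment matrices of sub-Gaussian vectors. If such a concentration statement is already one of the paper's auxiliary lemmas it can be quoted directly; otherwise it is standard, following e.g.\ from \citet{Vershynin2018} or \citet{Wainwright2019}.
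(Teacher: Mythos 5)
Your proposal is correct and follows essentially the same route as the paper's proof: the identical factorization $Z_n^{\sigma,2} = \tfrac1n (\Xsigma)^\top \Xsigma \,\ovec(E_n)$, the operator-norm/Euclidean-norm split, the bound $\normz{\ovec(E_n)} \leq \normFM{E_n} = \normzq{\widehat{\mu}_n - \mu^*} = \Op{p/n}$ via the rank-one structure of $E_n$, and the $\Op{1}$ control of $\normzM{\tfrac1n(\Xsigma)^\top\Xsigma}$ from sub-Gaussian concentration of $\ov(\f{X})$ (which the paper establishes in the proof of Theorem \ref{theorem:adaptive:LASSO:second:moments:growing} by citing Wainwright's covariance concentration result together with \ref{ass:second:moments:C}). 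No gaps.
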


\begin{proof}[Proof of Lemma \ref{lemma:gradient:bounded:second:moments:2:growing}]
	It is 
	\begin{align*}
		\normz{Z_n^{\sigma,2}} &= \normz{\frac{1}{n}\, \sum_{i=1}^n \big( e_i^\top \zeta_n \big) \ov \big(\f{X_i}\big)} = \normz{\frac{1}{n}\, \sum_{i=1}^n \Big( \ov\big(\f{X_i}\big)^\top \ovec\big( E_n \big) \Big)  \ov \big(\f{X_i}\big)} \\
		& \leq \normzM{\frac{1}{n}\, \sum_{i=1}^n \ov \big(\f{X_i}\big) \, \ov\big(\f{X_i}\big)^\top} \, \normz{\ovec\big( E_n \big)} \,.
	\end{align*}
	We multiply each entry of $E_n$, which is not on the diagonal, with $\sqrt{2}$ and denote the resulting matrix by $\widetilde{E}_n$. Then it is clear that $\normz{\ovec\big( E_n \big)} \leq \big\| \ovec\big( \widetilde{E}_n \big) \big\|_2$ and $\big\| \ovec\big( \widetilde{E}_n \big) \big\|_2 = \normFM{E_n}$. Moreover, recall that $E_n = \big( \mu^* - \widehat{\mu}_n \big) \big( \mu^* - \widehat{\mu}_n \big)^\top$ is a rank-one matrix and hence $\normFM{E_n} \leq \normzq{\widehat{\mu}_n - \mu^*}$. Hence we obtain
	\begin{align*}
		\frac{n}{p} \, \normz{Z_n^{\sigma,2}} \leq \normzM{\frac{1}{n}\, \big(\Xsigma\big)^\top \Xsigma} \, \frac{n}{p} \, \normzq{\widehat{\mu}_n - \mu^*} = \Op{1} \, \Op{1} = \Op{1}
	\end{align*}
	since $\sqrt{n/p} \, \normz{ \widehat{\mu}_{n} - \mu^* } = \Op{1}$.
\end{proof}

\begin{lemma} \label{lemma:gradient:bounded:second:moments:3:growing} 
	%
	Set $Z_n^{\sigma,3} = \frac{1}{n} \, \big(\Xsigma\big)^\top \xi_n $, then $\big\| Z_n^{\sigma,3}\big\|_2 = \Op{p^{3/2} / n^{5/8} + p^{2} / n^{3/4}} $. In particular, we obtain by Assumption \ref{ass:second:moments:limit:pn} the convergence $\sqrt{n}/p \, \big\| Z_n^{\sigma,3} \big\|_2 = \Op{p^{1/2} / n^{1/8} + p / n^{1/4}} = \op{1}$.
\end{lemma}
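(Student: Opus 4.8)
The plan is to exploit the product structure of the cross-term matrices $F_{n,i}$ from \eqref{def:Fni}. Recall that $\xi_n$ collects these contributions to $\epssigma$, i.e.\ $\xi_{n,i} = \ov(\f{X_i})^\top\ovec\big(F_{n,i}\big) = \f{X_i^\top} F_{n,i}\,\f{X_i}$. Writing $\eta_i \defeq \f{X_i^\top}\big(\f{A_i}-\mu^*\big) = Y_i - \f{X_i^\top}\mu^*$ for the centred ``oracle'' residual, which satisfies $\E\big[\eta_i \mid \f{X_i}\big]=0$ because $\f{A}$ and $\f{X}$ are independent, the definition of $F_{n,i}$ gives the scalar identity $\xi_{n,i} = 2\,\eta_i\,\f{X_i^\top}\big(\mu^*-\widehat{\mu}_n\big)$. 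Hence, setting $T_n \defeq \tfrac{2}{n}\sum_{i=1}^n \eta_i\,\ov(\f{X_i})\,\f{X_i^\top} \in \R^{p(p+1)/2 \times p}$, a matrix that does not involve the first-stage estimator, one has $Z_n^{\sigma,3} = T_n\,\big(\mu^*-\widehat{\mu}_n\big)$. The first step is then the purely deterministic bound $\normz{Z_n^{\sigma,3}} \le \normFM{T_n}\,\normz{\widehat{\mu}_n-\mu^*} = \normFM{T_n}\cdot\Op{\sqrt{p/n}}$, where the last factor is the assumed $\sqrt{n/p}$-consistency of $\widehat{\mu}_n$ in Theorem \ref{theorem:adaptive:LASSO:second:moments:growing}. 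This confines the randomness of the plug-in error to a scalar and leaves a clean i.i.d.\ average.

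The core of the argument is to control $\normFM{T_n}$. The summands $V_i \defeq 2\,\eta_i\,\ov(\f{X_i})\,\f{X_i^\top}$ are i.i.d.\ and, since $\f{A}\perp\f{X}$ and $\E[\eta_i\mid\f{X_i}]=0$, centred; expanding the fourth moment of $\sum_{i=1}^n V_i = n\,T_n$ with respect to the Frobenius inner product, independence and centring kill all mixed terms and leave $\E\big[\normFM{T_n}^4\big] \lesssim n^{-2}\big(\E\big[\normFM{V}^2\big]\big)^2 + n^{-3}\,\E\big[\normFM{V}^4\big]$, so that by Markov $\normFM{T_n} = \Op{ n^{-1/2}\big(\E\big[\normFM{V}^2\big]\big)^{1/2} + n^{-3/4}\big(\E\big[\normFM{V}^4\big]\big)^{1/4} }$. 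It then remains to bound the moments of $\normFM{V} = 2\,|\eta|\,\normz{\ov(\f{X})}\,\normz{\f{X}} \le 2\sqrt2\,|\eta|\,\normz{\f{X}}^3$, using $\normzq{\ov(\bx)}\le 2\,\normzv{\bx}$. Conditioning on $\f{X}$ and inserting $\E[\eta^2\mid\f{X}] = \f{X^\top}\Sigma^*\f{X} \le \lambda_{\max}(\Sigma^*)\,\normzq{\f{X}}$ and, more crudely, $\E[\eta^4\mid\f{X}] \le \normzv{\f{X}}\,\E\normzv{\f{A}-\mu^*}$ (Cauchy--Schwarz and independence, using only the finite fourth moments in Assumption \ref{ass:moments:second:moments:growing}), the problem reduces to moments of $\normz{\f{X}}$ of order up to $16$. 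For these I would invoke Assumption \ref{ass:regressors:subgaussian}: $\normzq{\f{X}}$ is a sum of the first $p$ coordinates of $\ov(\f{X})$, each of bounded mean (the diagonal of $\mathrm{C}^\sigma$ is controlled by \ref{ass:second:moments:C}) and uniformly sub-Gaussian fluctuation, so $\normzq{\f{X}}-\E\normzq{\f{X}}$ is sub-Gaussian with parameter $\Oop{p}$ while $\E\normzq{\f{X}} = \Oop{p}$, whence $\E\normz{\f{X}}^{2k} = \Oop{p^k}$ for every fixed $k$. Substituting these estimates and multiplying by the $\Op{\sqrt{p/n}}$ from the first step gives the claimed bound; the ``in particular'' statement then follows at once, since $\sqrt{n}/p$ times the rate equals $\Op{(p^4/n)^{1/8} + (p^4/n)^{1/4}} = \op{1}$ under Assumption \ref{ass:second:moments:limit:pn}.

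I expect the main obstacle to be tracking the covariate moments at the correct power of $p$: since $\ov(\f{X})$ lives in dimension $p(p+1)/2$, a lazy estimate such as $\normzq{\f{X}} \le p\,\normz{\ov(\f{X})}$ would place $\normzq{\f{X}}$ at scale $p^2$ rather than $p$, and one really has to combine the sub-Gaussianity of $\ov(\f{X})-\E\ov(\f{X})$ with the spectral bound on $\mathrm{C}^\sigma$ to land at $\E\normz{\f{X}}^{2k} = \Oop{p^k}$. The related and more serious difficulty is that $\f{A}$ is only assumed to have finite fourth moments, so $\E[\eta^4\mid\f{X}]$ cannot be bounded as tightly as under a sub-Gaussian design; this is precisely why the estimate splits into a second-moment part and a fourth-moment part, with the fractional $n$-exponents rather than the exponent $1$ one would obtain under stronger moment assumptions, consistent with Remark \ref{remark:assumption:p2n}. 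A final bookkeeping point is that $T_n$ must be handled through a Frobenius norm rather than by conditioning on $\widehat{\mu}_n$, since the first-stage estimator depends on the whole sample; the factorization $Z_n^{\sigma,3} = T_n(\mu^*-\widehat{\mu}_n)$ carried out in the first step is exactly what makes this legitimate.
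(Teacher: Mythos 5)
Your argument is correct, and while it opens with the same reduction as the paper --- the factorization $Z_n^{\sigma,3} = T_n\,(\mu^*-\widehat{\mu}_n)$ with $T_n = \tfrac{2}{n}\sum_{i=1}^n \eta_i\,\ov(\f{X_i})\,\f{X_i^\top}$ and the expenditure of the factor $\Op{\sqrt{p/n}}$ on the first-stage error --- it controls $T_n$ by a genuinely different mechanism. The paper bounds the \emph{operator} norm $\normzM{T_n}$ by truncating $\normz{\f{A_i}-\mu^*}$ at $\tau_n=n^{3/8}$, applying a covering argument with sub-exponential tail bounds to the truncated centered part, and separately handling the truncation bias and the probability of the complement; the exponents $n^{5/8}$ and $n^{3/4}$ in the statement are artifacts of optimizing over $\tau_n$. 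You instead bound the larger \emph{Frobenius} norm, whose moments are exactly computable because the summands $V_i$ are i.i.d.\ and centered, so no truncation is needed; your Hilbert-space Rosenthal inequality is valid (all mixed terms vanish exactly as you say), and in fact the fourth moment is superfluous, since already $\E\big[\normFM{T_n}^2\big]=\tfrac{4}{n}\,\E\big[\eta^2\,\normzq{\ov(\f{X})}\,\normzq{\f{X}}\big]=\Oop{p^5/n}$ gives $\normFM{T_n}=\Op{p^{5/2}/\sqrt{n}}$ by Chebyshev and hence $\normz{Z_n^{\sigma,3}}=\Op{p^3/n}$. Two bookkeeping points: your bound is not literally the displayed rate, but $p^3/n\leq p^2/n^{3/4}$ once $p^4\leq n$, so under Assumption \ref{ass:second:moments:limit:pn} it dominates the stated rate (it is in fact strictly sharper) and a fortiori yields the $\op{1}$ conclusion that is actually used downstream; and the step $\E[\eta^2\mid\f{X}]\leq\lambda_{\max}(\Sigma^*)\,\normzq{\f{X}}$ requires $\lambda_{\max}(\Sigma^*)$ to be controlled in $p$, for which $\lambda_{\max}(\Sigma^*)\leq\tr(\Sigma^*)=\Oop{p}$ suffices and follows from the same uniform fourth-moment bound on the coefficients that the paper itself invokes in \eqref{proof:gradient:bounded:second:moments:3:growing:4}. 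What each route buys: the paper's truncation-and-covering argument targets the sharper operator norm and would remain informative in regimes where the Frobenius norm is too crude, whereas your moment computation is shorter, dispenses with the sub-exponential machinery entirely, and under $p^4/n\to 0$ produces a strictly better bound.
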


\begin{proof}[Proof of Lemma \ref{lemma:gradient:bounded:second:moments:3:growing}]
	It is
	\begin{align}
		\normz{Z_n^{\sigma,3}} &= \normz{\frac{1}{n}\, \sum_{i=1}^n \big( e_i^\top \xi_n \big) \ov \big(\f{X_i}\big)} 
		= 2\, \normz{\frac{1}{n}\, \sum_{i=1}^n \Big( \f{X_i^\top} \big( \f{A_i} - \mu^* \big) \, \f{X_i^\top} \big( \mu^* - \widehat{\mu}_n \big) \Big)  \ov \big(\f{X_i}\big)} \notag \\
		&\leq 2\,\normzM{\frac{1}{n}\, \sum_{i=1}^n \Big( \f{X_i^\top} \big( \f{A_i} - \mu^* \big) \Big)  \ov \big(\f{X_i}\big) \, \f{X_i^\top}} \, \normz{\mu^* - \widehat{\mu}_n} \,, \label{proof:gradient:bounded:second:moments:3:growing:2}
	\end{align}
	and we have again $\sqrt{n/p} \, \normz{ \widehat{\mu}_{n} - \mu^* } = \Op{1}$. Moreover, let \begin{align*}
	\mathcal{T}_n (\tau_n) =  \bigcap_{i=1}^n \Big\{ \normz{\f{A_i} - \mu^*} \leq \tau_n \Big\}
	\end{align*}
	with $\tau_n > 0$, then we obtain 
	\allowdisplaybreaks
	{\small
	\begin{align}
		& \bigg\| \frac{1}{n}\, \sum_{i=1}^n \Big( \f{X_i^\top} \big( \f{A_i} - \mu^* \big) \Big)  \ov \big(\f{X_i}\big) \, \f{X_i^\top} \bigg\|_{\mathrm{M},2} \notag \\	
		&\leq \bigg\| \frac{1}{n}\, \sum_{i=1}^n \Big( \f{X_i^\top} \big( \f{A_i} - \mu^* \big) \Big)  \ov \big(\f{X_i}\big) \, \f{X_i^\top} \, \one_{  \mathcal{T}_n (\tau_n) } \bigg\|_{\mathrm{M},2} 
	+ \bigg\| \frac{1}{n}\, \sum_{i=1}^n \Big( \f{X_i^\top} \big( \f{A_i} - \mu^* \big) \Big)  \ov \big(\f{X_i}\big) \, \f{X_i^\top} \, \one_{  \mathcal{T}_n^c (\tau_n) } \bigg\|_{\mathrm{M},2} \notag \\
		&\leq \Bigg\| \frac{1}{n}\, \sum_{i=1}^n \Bigg( \Big( \f{X_i^\top} \big( \f{A_i} - \mu^* \big) \Big)  \ov \big(\f{X_i}\big) \, \f{X_i^\top} \, \one_{  \mathcal{T}_n (\tau_n) } - \E\bigg[ \Big( \f{X_i^\top} \big( \f{A_i} - \mu^* \big) \Big)  \ov \big(\f{X_i}\big) \, \f{X_i^\top} \, \one_{  \mathcal{T}_n (\tau_n) } \bigg] \Bigg) \Bigg\|_{\mathrm{M},2} \notag \\
		&  + \Bigg\| \frac{1}{n}\, \sum_{i=1}^n \E\bigg[ \Big( \f{X_i^\top} \big( \f{A_i} - \mu^* \big) \Big)  \ov \big(\f{X_i}\big) \, \f{X_i^\top} \, \one_{  \mathcal{T}_n (\tau_n) } \bigg] \Bigg\|_{\mathrm{M},2} + \bigg\| \frac{1}{n}\, \sum_{i=1}^n \Big( \f{X_i^\top} \big( \f{A_i} - \mu^* \big) \Big)  \ov \big(\f{X_i}\big) \, \f{X_i^\top} \, \one_{  \mathcal{T}_n^c (\tau_n) } \bigg\|_{\mathrm{M},2} \,. \label{proof:gradient:bounded:second:moments:3:growing:3}
	\end{align}
}
	For the first term of the sum we get
	\begin{align}
		\Bigg\| \frac{1}{n}\, &\sum_{i=1}^n \Bigg( \Big( \f{X_i^\top} \big( \f{A_i} - \mu^* \big) \Big)  \ov \big(\f{X_i}\big) \, \f{X_i^\top} \, \one_{  \mathcal{T}_n (\tau_n) } - \E\bigg[ \Big( \f{X_i^\top} \big( \f{A_i} - \mu^* \big) \Big)  \ov \big(\f{X_i}\big) \, \f{X_i^\top} \, \one_{  \mathcal{T}_n (\tau_n) } \bigg] \Bigg) \Bigg\|_{\mathrm{M},2} \notag \\
		 &= \sup_{\substack{u_1 \in \R^{p(p+1)/2},u_2 \in \R^{p},\\ \normz{u_1}, \normz{u_2} \leq 1}} \frac{1}{n} \, \sum_{i=1}^n \Bigg( u_1^\top \ov \big(\f{X_i}\big) \Big( \f{X_i^\top} \big( \f{A_i} - \mu^* \big) \, \f{X_i^\top} u_2  \Big) \, \one_{  \mathcal{T}_n (\tau_n) } \notag \\
		 & \qquad \qquad \qquad \qquad \qquad \qquad \qquad \qquad - \E\bigg[ u_1^\top \ov \big(\f{X_i}\big) \Big( \f{X_i^\top} \big( \f{A_i} - \mu^* \big) \, \f{X_i^\top} u_2  \Big) \, \one_{  \mathcal{T}_n (\tau_n) } \bigg]  \Bigg) \,. \label{proof:gradient:bounded:second:moments:3:growing:1}
	\end{align}
	For the second factors in brackets we obtain by the definition of the half-vectorization $\ovec$ in \eqref{def:vec} and the vector transformation $\ov$ in \eqref{def:v} the equation
	\begin{align*}
		\f{X_i^\top} \big( \f{A_i} - \mu^* \big) \, \f{X_i^\top} u_2  &= \f{X_i^\top} \big( \f{A_i} - \mu^* \big) \, u_2^\top \f{X_i} = \frac{1}{2} \, \f{X_i^\top} \Big( \big( \f{A_i} - \mu^* \big) \, u_2^\top +   u_2 \, \big( \f{A_i} - \mu^* \big)^\top \Big) \, \f{X_i} \\
		&=  \frac{1}{2} \, \ov\big(\f{X_i}\big)^\top \ovec \Big(  \big( \f{A_i} - \mu^* \big) \, u_2^\top +   u_2 \, \big( \f{A_i} - \mu^* \big)^\top \Big) 
	\end{align*}
	For the half-vectorization we can argue analogously as in Lemma \ref{lemma:gradient:bounded:second:moments:2:growing} and bound its Euclidean norm by
	\begin{align*}
		\frac{1}{2} \,\normz{\ovec \Big( \big( \f{A_i} - \mu^* \big) \, u_2^\top + u_2 \, \big( \f{A_i} - \mu^* \big)^\top \Big) } &\leq \frac{1}{2} \,\normFM{  \big( \f{A_i} - \mu^* \big) \, u_2^\top + u_2 \, \big( \f{A_i} - \mu^* \big)^\top } \\
		& \leq \normFM{  \big( \f{A_i} - \mu^* \big) \, u_2^\top }  \leq \normz{\f{A_i} - \mu^*} \, \normz{u_2} \,.
	\end{align*}
Suppose that $\ov(\f{X})$ is sub-Gaussian with variance proxy $\tauvX$, then conditionally on the coefficients $\f{A_1},\dotsc,\f{A_n}$, which are independent of the regressors $\f{X_1},\dotsc,\f{X_n}$, we obtain in \eqref{proof:gradient:bounded:second:moments:3:growing:1} for each $u_1,u_2$ a sum of centered and independent products consisting of two sub-Gaussian random variables with variance proxies $\tauvX \normzq{u_1} \leq \tauvX $ and $\tauvX \normzq{\f{A_i} - \mu^*} \, \normzq{u_2} \leq \tauvX \, \tau_n^2 $. Hence, in particular, the products are sub-Exponential with parameter bounded by $C_1 \, \tauvX \, \tau_n$ 
for a universal constant $C_1 > 0 $, see \citet[Lemma 2.7.7]{Vershynin2018}. Following the covering argument and applying the tail bound of sub-Exponential random variables as in \citet[Theorem 6.5]{Wainwright2019} leads to
\begin{align*}
	\Prob\Bigg( \, \Bigg\| \frac{1}{n}\, &\sum_{i=1}^n \Bigg( \Big( \f{X_i^\top} \big( \f{A_i} - \mu^* \big) \Big)  \ov \big(\f{X_i}\big) \, \f{X_i^\top} \, \one_{  \mathcal{T}_n (\tau_n) } \\
	&  \qquad - \E\bigg[ \Big( \f{X_i^\top} \big( \f{A_i} - \mu^* \big) \Big)  \ov \big(\f{X_i}\big) \, \f{X_i^\top} \, \one_{  \mathcal{T}_n (\tau_n) } \bigg] \Bigg) \Bigg\|_{\mathrm{M},2} \geq C_2\,\tau_n \, \frac{p}{\sqrt{n}} \Bigg) \leq C_3 \exp\big(-C_4 \, p^2 \big)
\end{align*}
for universal constants $C_2,C_3,C_4 >0$. Furthermore, we obtain for the third term in the sum in \eqref{proof:gradient:bounded:second:moments:3:growing:3} the estimate
\begin{align*}
	\Prob\Bigg( \bigg\| \frac{1}{n}\, \sum_{i=1}^n \Big( \f{X_i^\top} \big( \f{A_i} - \mu^* \big) \Big)  \ov \big(\f{X_i}\big) \, \f{X_i^\top} \, \one_{  \mathcal{T}_n^c (\tau_n) } \bigg\|_{\mathrm{M},2} \geq t \Bigg) \leq \Prob \big(\mathcal{T}_n^c (\tau_n)\big) \leq C_5 \, \frac{p^2\,n}{\tau_n^4} 
\end{align*}
for $t > 0$, since 
\begin{align}
	\Prob\big( \mathcal{T}_n^c (\tau_n) \big) &= \Prob \bigg( \bigcup_{i=1}^n \Big\{ \normz{\f{A_i} - \mu^*} > \tau_n \Big\} \bigg) \leq \sum_{i=1}^n \Prob\Big( \normz{\f{A_i} - \mu^*} > \tau_n \Big) \notag \\
	&= n \, \Prob\Big( \normz{\f{A} - \mu^*} > \tau_n \Big) \leq \frac{n\,\E\big[ \normzv{\f{A} - \mu^*} \big]}{\tau_n^4} \notag \\
	&= \frac{n}{\tau_n^4} \,\sum_{k,l=1}^p \E\Big[  \big(A_k - \mu_k^*\big)^2 \big(A_l - \mu_l^*\big)^2 \Big] 
	\leq C_5 \, \frac{p^2\,n}{\tau_n^4} \label{proof:gradient:bounded:second:moments:3:growing:4}
\end{align}
holds for a positive constant $C_5>0$ by Assumption \ref{moments:coefficients:growing}. Moreover, we obtain
\begin{align*}
	\E\bigg[ \Big( \f{X_i^\top} \big( \f{A_i} - \mu^* \big) \Big)  \ov \big(\f{X_i}\big) \, \f{X_i^\top} \, \one_{  \mathcal{T}_n (\tau_n) } \bigg] =  \E\bigg[ \Big( \f{X_i^\top} \big( \f{A_i} - \mu^* \big) \Big)  \ov \big(\f{X_i}\big) \, \f{X_i^\top} \, \big(  - \one_{  \mathcal{T}_n^c (\tau_n) } \big) \bigg]
\end{align*}
because 
\begin{align*}
	\E\bigg[ \Big( \f{X_i^\top} \big( \f{A_i} - \mu^* \big) \Big)  \ov \big(\f{X_i}\big) \, \f{X_i^\top} \bigg] = \E\bigg[ \Big( \f{X_i^\top} \E \big[ \f{A_i} - \mu^* \, \big| \, \f{X_i} \big] \Big)  \ov \big(\f{X_i}\big) \, \f{X_i^\top} \bigg] = \f{0}_{\frac{p(p+1)}{2} \times p} 
\end{align*}
is satisfied by the independence of $\f{X_i}$ and $\f{A_i}$. Hence the Cauchy Schwarz inequality implies for the second term in \eqref{proof:gradient:bounded:second:moments:3:growing:3} the estimate
\begin{align*}
	\Bigg\| \frac{1}{n}\, &\sum_{i=1}^n \E\bigg[ \Big( \f{X_i^\top} \big( \f{A_i} - \mu^* \big) \Big)  \ov \big(\f{X_i}\big) \, \f{X_i^\top} \, \one_{  \mathcal{T}_n (\tau_n) } \bigg] \Bigg\|_{\mathrm{M},2} \\
	&= \sup_{\substack{u_1 \in \R^{p(p+1)/2},u_2 \in \R^{p},\\ \normz{u_1}, \normz{u_2} \leq 1}} \frac{1}{n}\, \sum_{i=1}^n \E\bigg[ \Big( \f{X_i^\top} \big( \f{A_i} - \mu^* \big) \Big)  u_1^\top \ov \big(\f{X_i}\big) \, \f{X_i^\top} u_2 \, \big(  - \one_{  \mathcal{T}_n^c (\tau_n) } \big) \bigg]\\
	&\leq \sup_{\substack{u_1 \in \R^{p(p+1)/2},u_2 \in \R^{p},\\ \normz{u_1} , \normz{u_2} \leq 1}} \Bigg( \E\bigg[ \Big( \f{X^\top} \big( \f{A} - \mu^* \big) \Big)^2  \Big( u_1^\top \ov \big(\f{X}\big) \Big)^2 \big(\f{X^\top} u_2\big)^2 \bigg] \,  \Prob \big(\mathcal{T}_n^c (\tau_n)\big) \Bigg)^\frac{1}{2} \,.
\end{align*}
Further,
\begin{align*}
	&\sup_{\substack{u_1 \in \R^{p(p+1)/2},u_2 \in \R^{p},\\ \normz{u_1} , \normz{u_2} \leq 1}}  \E\bigg[ \Big( \f{X^\top} \big( \f{A} - \mu^* \big) \Big)^2  \Big( u_1^\top \ov \big(\f{X}\big) \Big)^2 \big(\f{X^\top} u_2\big)^2 \bigg]^\frac{1}{2} \\
	&\qquad \qquad \leq \sup_{\substack{u_1 \in \R^{p(p+1)/2},u_2 \in \R^{p},\\ \normz{u_1} , \normz{u_2} \leq 1}} \E\bigg[ \Big( \f{X^\top} \big( \f{A} - \mu^* \big) \Big)^4\bigg]^\frac{1}{4} \, \E\bigg[ \Big( u_1^\top \ov \big(\f{X}\big) \Big)^8 \bigg]^\frac{1}{8} \, \E\Big[ \big(\f{X^\top} u_2\big)^8 \Big]^\frac{1}{8}\\
	&\qquad \qquad \leq C_6 \, \E\Bigg[ \E\bigg[ \Big( \f{X^\top} \big( \f{A} - \mu^* \big) \Big)^4 \, \bigg| \, \f{A} \bigg] \Bigg]^\frac{1}{4}  \leq C_7 \, \E\Big[ \normzv{\f{A} - \mu^*}\Big]^\frac{1}{4} \leq C_8 \, \sqrt{p}\,,
\end{align*}
where $C_6,C_7,C_8>0$ are positive constants, since $\ov \big(\f{X}\big)$ and $\f{X}$ are sub-Gaussian and hence their moments exist, see \citet[Theorem 2.6]{Wainwright2019}. This implies together with \eqref{proof:gradient:bounded:second:moments:3:growing:4} the upper bound
\begin{align*}
	\Bigg\| \frac{1}{n}\, \sum_{i=1}^n \E\bigg[ \Big( \f{X_i^\top} \big( \f{A_i} - \mu^* \big) \Big)  \ov \big(\f{X_i}\big) \, \f{X_i^\top} \, \one_{  \mathcal{T}_n (\tau_n) } \bigg] \Bigg\|_{\mathrm{M},2} \leq C_8 \, \sqrt{p}\, \Big( \Prob \big(\mathcal{T}_n^c (\tau_n)\big) \Big)^\frac{1}{2} \leq C_8 \,\sqrt{C_5}\,\frac{p^{3/2} \, \sqrt{n}}{\tau_n^2}\,.
\end{align*}
So all in all collecting the terms leads to
\begin{align*}
	\Prob \Bigg( \bigg\| \frac{1}{n}\, &\sum_{i=1}^n \Big( \f{X_i^\top} \big( \f{A_i} - \mu^* \big) \Big)  \ov \big(\f{X_i}\big) \, \f{X_i^\top} \bigg\|_{\mathrm{M},2} \geq 2\, C_2\, \frac{\tau_n \,p}{\sqrt{n}} + 2 \, C_8 \,\sqrt{C_5}\,\frac{p^{3/2} \, \sqrt{n}}{\tau_n^2}  \Bigg) \\
	& \qquad \qquad \qquad \qquad \leq C_5 \, \frac{p^2\,n}{\tau_n^4} + C_3 \exp\big(-C_4 \, p^2 \big) \,.
\end{align*}
If $p^2 n /\tau_n^4 \to 0$ is satisfied, we obtain by \eqref{proof:gradient:bounded:second:moments:3:growing:2} the rate 
\begin{align*}
	\normz{Z_n^{\sigma,3}} = \Op{ \frac{\tau_n\, p^{3/2}}{n} + \frac{p^2}{\tau_n^2}} \,.
\end{align*}
Let $\tau_n = n^{3/8}$, then $p^2 n /\tau_n^4 = p^2/\sqrt{n} \to 0$ by Assumption \ref{ass:second:moments:limit:pn}, and 
\begin{align*}
	\normz{Z_n^{\sigma,3}} = \Op{ \frac{p^{3/2}}{n^{5/8}} + \frac{p^2}{n^{3/4}}} \,.
\end{align*}
\end{proof}

\begin{remark}\label{rem:technicalcondagain}	
	Suppose that the vector $\f{A}-\mu^*$ is sub-Gaussian with variance proxy $\tauA$, then we can use in the proof of Lemma \ref{lemma:gradient:bounded:second:moments:3:growing} the estimate 
	\begin{align*}
		\Prob \Big( \normz{\f{A}-\mu^*} > \tau_n \Big) = \Big( \sup_{v \in \R^p, \normz{v} \leq 1 } v^\top \big( \f{A}-\mu^* \big) > \tau_n \Big) \leq 6^p \exp\bigg(-\frac{\tau_n^2 }{8 \, \tauA}\bigg)  \,,
	\end{align*}
	see \citet[Theorem 1.19]{Rigollet2019}. Let $\tau_n = \sqrt{(C_p + \log(6) ) 8 \, \tauA\, p}$ with $C_p>0$, then
	\begin{align*}
		n \, \Prob \Big( \normz{\f{A}-\mu^*} > \tau_n \Big) \leq n \,\exp\bigg(-\frac{(C_p + \log(6) ) 8 \, \tauA\, p }{8 \, \tauA} + p \log(6) \bigg) = n \, \exp\big(-C_p \,p\big) ~ \to ~ 0 \,.
	\end{align*}
	Hence $\big\| Z_n^{\sigma,3}\big\|_2 = \Op{p^2/n} $ and, in particular, $\sqrt{n}/p \, \big\| Z_n^{\sigma,3}\big\|_2 = \Op{p / \sqrt{n}} = \op{1}$.
\end{remark}

\begin{proof}[Proof of Theorem \ref{theorem:adaptive:LASSO:second:moments:growing}]
	We shall use the primal-dual witness characterization of the adaptive LASSO in Lemma \ref{lemma:primal:dual:witness:adaptive:lasso} in the supplement, Section \ref{sec:appendix:estimators}, to prove the sign-consistency \eqref{sign:consistency:adaptive:LASSO:second:moments:growing}. We obtain by Assumption \ref{ass:regressors:subgaussian} and \citet[Theorem 6.5]{Wainwright2019} that
	\begin{align*}
		\normzM{\frac{1}{n} (\Xsigma)^\top \Xsigma - \mathrm{C}^\sigma} =  \Op{\sqrt{p(p+1)/n}} = \Op{p/\sqrt{n}}\,,
	\end{align*}
	which implies together with the Assumptions \ref{ass:second:moments:C} and \ref{ass:second:moments:limit:pn} the invertibility of the Gram matrix for large $n$, and hence by \citet[Lemma 11]{Loh2017} we get also
	\begin{align*}
		\normzM{ \bigg( \frac{1}{n} (\Xsigma)^\top \Xsigma \bigg)^{-1} - \big(\mathrm{C}^\sigma\big)^{-1}} = \Op{p/\sqrt{n}}\,.
	\end{align*}
	Furthermore, basic properties of the $\ell_2$ operator norm and Assumption \ref{ass:second:moments:C} lead to
	\begin{align*}
		&\normzM{\big(\X_{n,S_\sigma^c}^{\sigma}\big)^\top  \XsigmaS \Big( \big(\XsigmaS\big)^\top  \XsigmaS \Big)^{-1} - \mathrm{C}_{S_\sigma^c S_\sigma}^\sigma \big(\mathrm{C}_{S_\sigma S_\sigma}^\sigma\big)^{-1}} \\
		&  \quad \quad \quad =  \normzM{\frac{1}{n} \, \big(\X_{n,S_\sigma^c}^{\sigma}\big)^\top  \XsigmaS \bigg(\frac{1}{n} \, \big(\XsigmaS\big)^\top  \XsigmaS \bigg)^{-1} - \mathrm{C}_{S_\sigma^c S_\sigma}^{\sigma}  \big(\mathrm{C}_{S_\sigma S_\sigma}^{\sigma} \big)^{-1} } \\
		& \quad \quad \quad \leq  \Bigg( \normzM{\mathrm{C}_{S_\sigma^c S_\sigma}^{\sigma}} + \normzM{\frac{1}{n} \, \big(\X_{n,S_\sigma^c}^{\sigma}\big)^\top  \XsigmaS - \mathrm{C}_{S_\sigma^c S_\sigma}^{\sigma}} \Bigg) \\
		& \quad \quad \quad \quad \quad \quad \quad \quad \quad \quad \quad \quad \quad \quad \quad \cdot \normzM{\bigg(\frac{1}{n} \, \big(\XsigmaS\big)^\top  \XsigmaS \bigg)^{-1} - \big(\mathrm{C}_{S_\sigma S_\sigma}^{\sigma} \big)^{-1}} \\
		& \quad \quad \quad \quad \quad \quad \quad \quad + \normzM{\frac{1}{n} \, \big(\X_{n,S_\sigma^c}^{\sigma}\big)^\top  \XsigmaS - \mathrm{C}_{S_\sigma^c S_\sigma}^{\sigma}} \, \normzM{\big(\mathrm{C}_{S_\sigma S_\sigma}^{\sigma} \big)^{-1}}  = \Op{p/\sqrt{n}} \,.
	\end{align*} 
	In particular, this implies
	\begin{align}
		\normzM{ \bigg( \frac{1}{n} (\Xsigma)^\top \Xsigma \bigg)^{-1}} = \Op{1}\,, \qquad \normzM{\big(\X_{n,S_\sigma^c}^{\sigma}\big)^\top  \XsigmaS \Big( \big(\XsigmaS\big)^\top  \XsigmaS \Big)^{-1}} = \Op{1}\,. \label{proof:growing:1}
	\end{align}
	
	Moreover, let $\widehat{\sigma}_{n,\min}^{\,\init} \defeq \min_{k \in S_{\sigma}} |\widehat{\sigma}_{n,k}^{\,\init}|$, then
	\begin{align*}
		\bigg| \frac{\widehat{\sigma}_{n,\min}^{\,\init} - \sigma_{\min}^*}{\sigma_{\min}^*} \bigg| \leq \frac{1}{\sigma_{\min}^*} \, \normz{\widehat{\sigma}_{n}^{\,\init} - \sigma^*} = \Op{\frac{p}{\sigma_{\min}^* \, \sqrt{n}}} = \op{1}
	\end{align*}
	since $\sqrt{n}/p  \, \normz{ \widehat{\sigma}_{n}^{\,\init} - \sigma^* } = \Op{1}$ and $p/(\sigma_{\min}^* \, \sqrt{n}) \to 0$. This implies
	\begin{align*}
		\bigg( 1 + \frac{\widehat{\sigma}_{n,\min}^{\,\init} - \sigma_{\min}^*}{\sigma_{\min}^*}\bigg)^{-1} = \Op{1}\,,
	\end{align*}
	see \citet[Section 2.2]{Vaart1998}. Hence we obtain
	\begin{align}
		\frac{\sqrt{n}}{p}\, \normz{\lambda_n^{\sigma} \, \bigg( \frac{1}{ |\widehat{\sigma}_{n,S_{\sigma}}^{\,\init}|} \odot \sign\big(\sigma_{S_\sigma}^*\big) \bigg)} &\leq \frac{\sqrt{n} \, \lambda_n^{\sigma}}{p} \, \normz{\frac{1}{ |\widehat{\sigma}_{n,S_{\sigma}}^{\,\init}|}} \leq \frac{\sqrt{s_\sigma \, n} \, \lambda_n^{\sigma}}{p} \, \normi{\frac{1}{ |\widehat{\sigma}_{n,S_{\sigma}}^{\,\init}|}} \notag \\
		&= \frac{\sqrt{s_\sigma \, n} \, \lambda_n^{\sigma}}{p} \, \big(\widehat{\sigma}_{n,\min}^{\,\init}\big)^{-1} 
		= \op{1} \label{proof:growing:3}
	\end{align}
	since $\sqrt{s_\sigma \, n} \, \lambda_n^{\sigma} / (\sigma_{\min}^* \,p) \to 0 $ by assumption. It follows that
	\begin{align}
		&\frac{\sqrt{n}}{p} \, \normz{ \big(\X_{n,S_\sigma^c}^{\sigma}\big)^\top  \XsigmaS \Big( \big(\XsigmaS\big)^\top  \XsigmaS \Big)^{-1} \Bigg( \lambda_n^{\sigma} \, \bigg( \frac{1}{|\widehat{\sigma}_{n,S_{\sigma}}^{\,\init}| } \odot \sign\big(\sigma_{S_\sigma}^*\big) \bigg) \Bigg) + \frac{1}{n} \, \big(\X_{n,S_\sigma^c}^{\sigma}\big)^\top \proj{\XsigmaS} \, \epssigma } \notag \\
		& \quad \leq \normzM{ \big(\X_{n,S_\sigma^c}^{\sigma}\big)^\top  \XsigmaS \Big( \big(\XsigmaS\big)^\top  \XsigmaS \Big)^{-1}} \, \frac{\sqrt{n}}{p} \, \normz{ \lambda_n^{\sigma} \, \bigg( \frac{1}{|\widehat{\sigma}_{n,S_{\sigma}}^{\,\init}| } \odot \sign\big(\sigma_{S_\sigma}^*\big) \bigg) }\notag \\
		& \quad \quad  + \frac{\sqrt{n}}{p} \, \normz{\frac{1}{n} \, \big(\X_{n,S_\sigma^c}^{\sigma}\big)^\top  \epssigma} + \normzM{ \big(\X_{n,S_\sigma^c}^{\sigma}\big)^\top  \XsigmaS \Big( \big(\XsigmaS\big)^\top  \XsigmaS \Big)^{-1}} \, \frac{\sqrt{n}}{p} \, \normz{\frac{1}{n} \, \big(\X_{n,S_\sigma}^{\sigma}\big)^\top  \epssigma} \notag \\
		& \quad = \Op{1} \,\op{1} +  \Op{1} + \Op{1}  = \Op{1}\,, \label{proof:growing:2}
	\end{align}         
	by Lemmas \ref{lemma:gradient:bounded:second:moments:1:growing} - \ref{lemma:gradient:bounded:second:moments:3:growing} and \eqref{proof:growing:1}, where 
	\begin{align*}
		\proj{\XsigmaS} = \mathrm{I}_n - \XsigmaS \Big( \big(\XsigmaS\big)^\top \XsigmaS \Big)^{-1} \big(\XsigmaS\big)^\top \,.
	\end{align*}
	Furthermore, it is 
\begin{align*}
	\frac{\big|\widehat{\sigma}_{n,k}^{\,\init}\big|}{\lambda_{n}^{\sigma}} \leq \frac{\big\|\widehat{\sigma}_{n,S_\sigma^c}^{\,\init}\big\|_2 }{\lambda_{n}^{\sigma}} = \frac{\big\|\widehat{\sigma}_{n,S_\sigma^c}^{\,\init} - \sigma_{S_\sigma^c}^*\big\|_2 }{\lambda_{n}^{\sigma}} \leq \frac{\big\|\widehat{\sigma}_{n}^{\,\init} - \sigma^*\big\|_2 }{\lambda_{n}^{\sigma}} = \frac{ \sqrt{n}/p \, \big\|\widehat{\sigma}_{n}^{\,\init} - \sigma^*\big\|_2 }{ (\sqrt{n}/p) \, \lambda_{n}^{\sigma}}
\end{align*}
	for all $k \in S^c$. The condition $\sqrt{n}/p \, \normz{ \widehat{\sigma}_{n}^{\,\init} - \sigma^* } = \Op{1}$ together with $n\,\lambda_n^\sigma/p^2 \to \infty$ implies the convergence
\begin{align*}
	\frac{\big|\widehat{\sigma}_{n,k}^{\,\init}\big|}{ (\sqrt{n}/p) \, \lambda_{n}^{\sigma}} = \frac{1}{n\,\lambda_n^\sigma/p^2} \, \Op{1} = \op{1} \,. 
\end{align*}
	Hence it follows by \eqref{proof:growing:2} that the first condition \eqref{primal:dual:witness:adaptive:lasso:1} of Lemma \ref{lemma:primal:dual:witness:adaptive:lasso} is satisfied with high probability for a sufficient large sample size $n$. Furthermore, let
	\begin{align*}
		\widetilde{\sigma}_{n,S_\sigma} =  \sigma_{S_\sigma}^* + \bigg(\frac{1}{n} \, \big(\XsigmaS\big)^\top  \XsigmaS \bigg)^{-1} \Bigg( \frac{1}{n} \, \big(\XsigmaS\big)^\top \epssigma -  \lambda_n^{\sigma} \, \bigg( \frac{1}{ |\widehat{\sigma}_{n,S_{\sigma}}^{\,\init}|} \odot \sign\big(\sigma_{S_\sigma}^*\big) \bigg) \Bigg) \,.
	\end{align*}
	Then we obtain
	\begin{align*}
		\frac{\sqrt{n}}{p} \, \normz{ \widetilde{\sigma}_{n,S_\sigma} -  \sigma_{S_\sigma}^* } &\leq  \normzM{\bigg(\frac{1}{n} \, \big(\XsigmaS\big)^\top  \XsigmaS \bigg)^{-1}} \Bigg( \frac{\sqrt{n}}{p} \, \normz{\frac{1}{n} \, \big(\X_{n,S_\sigma}^{\sigma}\big)^\top  \epssigma}  \notag \\
		& \quad \quad \quad \quad \quad \quad + \frac{\sqrt{n}}{p} \, \normz{\lambda_n^{\sigma} \, \bigg( \frac{1}{ |\widehat{\sigma}_{n,S_{\sigma}}^{\,\init}|} \odot \sign\big(\sigma_{S_\sigma}^*\big) \bigg) } \Bigg) \notag \\
		&= \Op{1} \big( \Op{1} + \op{1} \big) = \Op{1}
	\end{align*}
	by \eqref{proof:growing:1}, \eqref{proof:growing:3} and Lemmas \ref{lemma:gradient:bounded:second:moments:1:growing} - \ref{lemma:gradient:bounded:second:moments:3:growing}. In particular, this implies 
	\begin{align*}
		\normz{ \widetilde{\sigma}_{n,S_\sigma} -  \sigma_{S_\sigma}^* } = \Op{p/\sqrt{n}}=\op{1} 
	\end{align*}
	by Assumption \ref{ass:second:moments:limit:pn}, and hence the second condition, $\sign\big(\widetilde{\sigma}_{n,S_\sigma}\big)= \sign\big(\sigma_{S_\sigma}^*\big)$, of Lemma \ref{lemma:primal:dual:witness:adaptive:lasso} is also satisfied with high probability for large sample sizes $n$. Sign-consistency of the adaptive LASSO and $\widehat{\sigma}_{n,S_\sigma}^{\,\AL} = \widetilde{\sigma}_{n,S_\sigma}$ is the consequence.
\end{proof}

\newpage

\appendix

\section{Supplement: Proofs for Section \ref{sec:fixedparasymp}} \label{sec:fixedp:proofs}

\subsubsection*{Proof of Proposition \ref{lemma:Csigma:positive:definite}}

\begin{proof}[Proof of Proposition \ref{lemma:Csigma:positive:definite}] 
From Theorem \ref{the:identcartprodt}, under the assumptions of the proposition the matrix
\[ 	 S = \bigg[ \ov \Big( \big(1, \f{W_1^\top} \big)^\top \Big) ,\dotsc, \ov \Big( \big(1, \f{W_{p(p+1)/2}^\top} \big)^\top\Big) \bigg]^\top 
\]
is of full rank with positive probability.
Therefore, the random positive semi-definite matrix 
\begin{align*}
	\frac{1}{n} \, \big(\Xsigma\big)^\top  \Xsigma = \frac{1}{n} \sum_{i=1}^{n} \ov\Big( \big(1,\f{W_i^\top} \big)^\top \Big)\,\ov\Big( \big(1,\f{W_i^\top} \big)^\top \Big)^\top 
\end{align*}
for $n \geq p(p+1)/2$ is positive definite with positive probability. Hence its expected value, which equals $\mathrm{C}^{\sigma} $, is positive definite.  
\end{proof}

%
%


\subsubsection*{Proof of Theorem \ref{theorem:adaptive:LASSO:second:moments}}

	Turning to the proof of Theorem \ref{theorem:adaptive:LASSO:second:moments}, recall the decomposition \eqref{eq:errordecomp} of  the error term \eqref{def:epssigma}. 

\begin{lemma} \label{lemma:gradient:bounded:second:moments:2} 
	Under the conditions of Theorem \ref{theorem:adaptive:LASSO:second:moments}, we have that 
	\begin{align*}
		\frac{1}{\sqrt{n}} \, \big(\Xsigma\big)^\top \big( \zeta_n + \xi_n \big) = \op{1} \, .
	\end{align*}
\end{lemma}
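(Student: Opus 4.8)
The plan is to treat the $\zeta_n$ and $\xi_n$ contributions separately, and in each case to factor the estimation error $\mu^* - \widehat{\mu}_n$ out of the sum, showing that what it multiplies is $\Op{1}$ while the remaining factor is $\op{1}$; Slutsky's lemma then finishes the argument. Throughout I would use that, by Assumption \ref{ass:moments:second:moments}, the covariates have finite eighth moments and $\f{A}$ has finite fourth moments, which is exactly what is needed so that the averages below satisfy a law of large numbers.

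For the $\zeta_n$ term, first write
\[
\frac{1}{\sqrt{n}}\,(\Xsigma)^\top \zeta_n = \Big(\frac{1}{n}\sum_{i=1}^n \ov(\f{X_i})\,\ov(\f{X_i})^\top\Big)\,\sqrt{n}\,\ovec(E_n).
\]
The matrix in parentheses converges to $\mathrm{C}^{\sigma}$ by the law of large numbers (finite fourth moments of $\f{X}$), hence is $\Op{1}$. For the vector $\sqrt{n}\,\ovec(E_n)$, I would use that $E_n = (\mu^*-\widehat{\mu}_n)(\mu^*-\widehat{\mu}_n)^\top$ is of rank one, so, exactly as in the proof of Lemma \ref{lemma:gradient:bounded:second:moments:2:growing}, $\normz{\ovec(E_n)} \le \normFM{E_n} \le \normzq{\widehat{\mu}_n - \mu^*}$; therefore $\sqrt{n}\,\normz{\ovec(E_n)} \le n^{-1/2}\big(\sqrt{n}\,\normz{\widehat{\mu}_n-\mu^*}\big)^2 = \Op{n^{-1/2}} = \op{1}$ by $\sqrt{n}$-consistency of $\widehat{\mu}_n$. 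Hence $\frac{1}{\sqrt{n}}(\Xsigma)^\top\zeta_n = \Op{1}\cdot\op{1} = \op{1}$.

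For the $\xi_n$ term, I would expand $F_{n,i} = (\f{A_i}-\mu^*)(\mu^*-\widehat{\mu}_n)^\top + (\mu^*-\widehat{\mu}_n)(\f{A_i}-\mu^*)^\top$ to get $\f{X_i^\top} F_{n,i}\f{X_i} = 2\big(\f{X_i^\top}(\f{A_i}-\mu^*)\big)\big(\f{X_i^\top}(\mu^*-\widehat{\mu}_n)\big)$, so that
\[
\frac{1}{\sqrt{n}}(\Xsigma)^\top\xi_n = 2\,\Big(\frac{1}{n}\sum_{i=1}^n \ov(\f{X_i})\,\big(\f{X_i^\top}(\f{A_i}-\mu^*)\big)\,\f{X_i^\top}\Big)\,\sqrt{n}\,(\mu^*-\widehat{\mu}_n).
\]
Here $\sqrt{n}(\mu^*-\widehat{\mu}_n) = \Op{1}$. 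The $\tfrac{p(p+1)}{2}\times p$ matrix in parentheses is an average of i.i.d.\ summands that have \emph{mean zero}: conditioning on $\f{X_i}$ and using independence of $\f{X_i}$ and $\f{A_i}$ gives $\E[\f{A_i}-\mu^*\mid \f{X_i}] = \f{0}_p$, which annihilates each entry. These summands have finite first moment since, by independence and Cauchy--Schwarz, $\E\big[\normzM{\ov(\f{X})(\f{X^\top}(\f{A}-\mu^*))\f{X^\top}}\big] \le C\,\E\big[\normz{\f{X}}^4\big]\,\E\big[\normz{\f{A}-\mu^*}\big] < \infty$ under (A1)--(A2). Therefore the averaged matrix is $\op{1}$ by the law of large numbers, and $\frac{1}{\sqrt{n}}(\Xsigma)^\top\xi_n = \op{1}\cdot\Op{1} = \op{1}$. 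Adding the two bounds yields $\frac{1}{\sqrt{n}}(\Xsigma)^\top(\zeta_n+\xi_n) = \op{1}$.

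I do not expect a genuine obstacle here; the only care required is the moment bookkeeping licensing the two applications of the law of large numbers — in particular separating the $\f{X}$- and $\f{A}$-moments in the $\xi_n$ term via independence — and the $\ovec$-versus-Frobenius comparison for $E_n$. Conceptually the point is just that $\mu^*-\widehat{\mu}_n$ enters $\zeta_n$ quadratically and enters $\xi_n$ multiplied by a mean-zero sample average, so in both cases the extra factor of order $n^{-1/2}$ needed to beat the $\sqrt{n}$ normalization is available for free.
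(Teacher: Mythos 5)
Your argument is correct and takes essentially the same route as the paper's proof: you isolate the $\zeta_n$ term, where $\widehat{\mu}_n-\mu^*$ enters quadratically so that $\sqrt{n}\,\ovec(E_n)=\Op{n^{-1/2}}$ against an $\Op{1}$ sample second-moment matrix, and the $\xi_n$ term, where the estimation error is multiplied by a mean-zero (by independence of $\f{X}$ and $\f{A}$) sample average that is $\op{1}$ by the law of large numbers. The only cosmetic difference is that you keep the factorization in matrix form while the paper expands it coordinate-wise over $q$ and over $(k,l)$; the moment bookkeeping is the same.
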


The proofs of the previous as well as the following lemma are deferred to the end of this section.

\begin{lemma} \label{lemma:gradient:bounded:second:moments:1} 
%
Set $Z_n^{\sigma,1} = \frac{1}{\sqrt{n}} \, \big(\Xsigma\big)^\top \delta_n$, then 
	\begin{align*}
		\E\big[ Z_n^{\sigma,1} \, \big| \, \Xsigma \big] = \f{0}_{p(p+1)/2} \quad \text{and} \quad \cov\big( Z_n^{\sigma,1} \, \big| \, \Xsigma \big) = \frac{1}{n}\, \big(\Xsigma\big)^\top \Omega_n^{\sigma} \, \Xsigma \,  ,
	\end{align*}
where $\Omega_n^{\sigma}$ is a diagonal matrix with entries $\ov( \f{X_1} )^\top \Psi^* \, \ov ( \f{X_1} ),\dotsc,\ov( \f{X_n} )^\top \Psi^* \, \ov ( \f{X_n} )$. In particular, $\cov(Z_n^{\sigma,1})= \mathrm{B}^\sigma$ and $Z_n^{\sigma,1} = \Op{1} $.
\end{lemma}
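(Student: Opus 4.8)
The plan is to condition on the design matrix $\Xsigma=(\f{X_1},\dots,\f{X_n})$ and observe that, conditionally, $Z_n^{\sigma,1}$ is a normalized sum of independent, centered terms whose conditional second moment can be written in closed form. Write $Z_n^{\sigma,1}=\tfrac{1}{\sqrt n}\sum_{i=1}^n \delta_{n,i}\,\ov(\f{X_i})$, where $\delta_{n,i}=\ov(\f{X_i})^\top\ovec(D_i-\Sigma^*)=\f{X_i}^\top(D_i-\Sigma^*)\f{X_i}$ is the $i$-th coordinate of $\delta_n$ in \eqref{def:deltan}. Since the pairs $(\f{X_i},\f{A_i})$ are independent over $i$ with $\f{A_i}$ independent of $\f{X_i}$, the conditional law of $(\f{A_1},\dots,\f{A_n})$ given $\Xsigma$ factorizes into $n$ copies of the marginal law of $\f{A}$; hence the summands $\delta_{n,i}\,\ov(\f{X_i})$ are conditionally independent given $\Xsigma$. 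Using $\E[D_i\mid\Xsigma]=\E[D_i]=\cov(\f{A})=\Sigma^*$ from \eqref{def:cov:true}, each $\delta_{n,i}$ has conditional mean zero, so $\E[Z_n^{\sigma,1}\mid\Xsigma]=\f{0}_{p(p+1)/2}$.

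Conditional independence and centering then give $\cov(Z_n^{\sigma,1}\mid\Xsigma)=\tfrac1n\sum_{i=1}^n \var(\delta_{n,i}\mid\Xsigma)\,\ov(\f{X_i})\ov(\f{X_i})^\top$, all cross terms vanishing. The crucial step is to evaluate $\var(\delta_{n,i}\mid\f{X_i})$: fixing $\f{X_i}$, only $\ovec(D_i)$ is random, so $\var(\delta_{n,i}\mid\f{X_i})=\ov(\f{X_i})^\top\,\cov\!\big(\ovec(D_i)\big)\,\ov(\f{X_i})$, where $\cov(\ovec(D_i))$ is the covariance matrix of the random vector $\ovec(D_i)\in\R^{p(p+1)/2}$. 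Its entry indexed by $(k,l)$ and $(u,v)$ equals $\cov\!\big((D_i)_{kl},(D_i)_{uv}\big)=\cov\!\big((A_k-\mu_k^*)(A_l-\mu_l^*),\,(A_u-\mu_u^*)(A_v-\mu_v^*)\big)=(\mathcal{M}^{kl})_{uv}$ from \eqref{def:Mkl}; collecting these into rows in the half-vectorization ordering identifies $\cov(\ovec(D_i))$ with $\Psi^*$. Consequently $\var(\delta_{n,i}\mid\Xsigma)=\ov(\f{X_i})^\top\Psi^*\,\ov(\f{X_i})$, which is precisely the $i$-th diagonal entry of $\Omega_n^{\sigma}$, and the claimed identity $\cov(Z_n^{\sigma,1}\mid\Xsigma)=\tfrac1n(\Xsigma)^\top\Omega_n^{\sigma}\Xsigma$ follows since the rows of $\Xsigma$ are the $\ov(\f{X_i})^\top$.

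Next, the tower rule together with $\E[Z_n^{\sigma,1}\mid\Xsigma]=\f{0}$ yields $\cov(Z_n^{\sigma,1})=\E\big[\tfrac1n(\Xsigma)^\top\Omega_n^{\sigma}\Xsigma\big]$; by identical distribution every summand has common mean $\E\big[(\ov(\f{X})^\top\Psi^*\ov(\f{X}))\,\ov(\f{X})\ov(\f{X})^\top\big]$, which is $\mathrm{B}^{\sigma}$ by \eqref{def:B:second:moments}, so $\cov(Z_n^{\sigma,1})=\mathrm{B}^{\sigma}$. Finally, $\E[Z_n^{\sigma,1}]=\f{0}$ and $\E\big[\normzq{Z_n^{\sigma,1}}\big]=\tr(\mathrm{B}^{\sigma})<\infty$, the finiteness holding because the entries of $\Psi^*$ are finite by the fourth-moment assumption \ref{moments:second:moments:B1} on $\f{A}$, while the entries of $\mathrm{B}^{\sigma}$ involve at most eighth moments of $\f{X}$, finite by \ref{moments:second:moments:B2}; Markov's inequality then gives $Z_n^{\sigma,1}=\Op{1}$.

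The only genuinely delicate point is the middle paragraph: matching the ordering of the half-vectorizations $\ovec(\mathcal{M}^{kl})$ that form the rows of $\Psi^*$ with the rows of $\cov(\ovec(D_i))$, and keeping the factor-$2$ convention in $\ov(\cdot)$ straight against the plain half-vectorization $\ovec(\cdot)$ used inside $D_i$ and $\Psi^*$ (this is exactly what makes $\ov(\f{X_i})^\top\Psi^*\ov(\f{X_i})$ the correct conditional variance). Everything else is standard conditioning plus routine moment bookkeeping under Assumption \ref{ass:moments:second:moments}.
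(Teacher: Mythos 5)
Your proposal is correct and follows essentially the same route as the paper's proof: condition on $\Xsigma$, show $\E[\delta_n \mid \Xsigma] = \f{0}_n$ and $\cov(\delta_n \mid \Xsigma) = \Omega_n^\sigma$ (you usefully spell out the identification $\cov(\ovec(D_i)) = \Psi^*$ via the entries $(\mathcal{M}^{kl})_{uv}$, which the paper dismisses as ``simple calculation''), and then apply the law of total covariance to obtain $\cov(Z_n^{\sigma,1}) = \mathrm{B}^\sigma$. The only cosmetic difference is the last step: you get $Z_n^{\sigma,1} = \Op{1}$ from $\E\big[\normzq{Z_n^{\sigma,1}}\big] = \tr(\mathrm{B}^\sigma) < \infty$ and Markov's inequality, whereas the paper invokes the law of large numbers for the conditional covariance; both are valid.
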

%

%
%


\begin{proof}[Proof of Theorem \ref{theorem:adaptive:LASSO:second:moments}] 
	We shall use the primal-dual witness characterization of the adaptive LASSO in Lemma \ref{lemma:primal:dual:witness:adaptive:lasso} in the supplement, Section \ref{sec:appendix:estimators}, to prove the sign-consistency \eqref{sign:consistency:adaptive:LASSO:second:moments}, and the Lindeberg-Feller central limit theorem for random vectors, see \citet[Proposition 2.27]{Vaart1998}, to prove the asymptotic normality \eqref{asymptotic:normality:adaptive:LASSO:second:moments}. For more details see also the proof of Theorem \ref{theorem:adaptive:LASSO:second:moments:growing} if necessary.
	By Lemmas \ref{lemma:gradient:bounded:second:moments:2} and \ref{lemma:gradient:bounded:second:moments:1}, setting
		\begin{align*}
		\proj{\XsigmaS} = \mathrm{I}_n - \XsigmaS \Big( \big(\XsigmaS\big)^\top \XsigmaS \Big)^{-1} \big(\XsigmaS\big)^\top \,,
	\end{align*}
	%
	%
	we have that
	\begin{align*}
		\frac{1}{\sqrt{n}}\, \big(\X_{n,S_\sigma^c}^{\sigma}\big)^\top \proj{\XsigmaS} \, \epssigma = \Op{1}\,. 
	\end{align*}
	In addition, the requirements $\sqrt{n} \, \lambda_n^{\sigma} \to 0 $ and $\sqrt{n} \, \big( \widehat{\sigma}_{n}^{\,\init} - \sigma^* \big) = \Op{1}$ in Theorem \ref{theorem:adaptive:LASSO:second:moments} lead to
	\begin{align}
	0 \leq \frac{\sqrt{n} \, \lambda_{n}^{\sigma}}{\big|\widehat{\sigma}_{n,k}^{\,\init}\big|}  \leq \frac{\sqrt{n} \, \lambda_{n}^{\sigma}}{ \Big| \big|\sigma_{k}^*\big| - \big| \widehat{\sigma}_{n,k}^{\,\init} - \sigma_{k}^*\big| \Big| } \stackrel{\Prob} \to 0 \label{proof:theorem:adaptive:LASSO:second:moments:1}
	\end{align}
	for all $k \in S_{\sigma}$ since $| \sigma_k^* | >0$ for these $k$.  
	This implies
	\begin{align}
		\sqrt{n} \, \Bigg[ \big(\X_{n,S_\sigma^c}^{\sigma}\big)^\top  &\XsigmaS \Big( \big(\XsigmaS\big)^\top  \XsigmaS \Big)^{-1} \Bigg( \lambda_n^{\sigma} \, \bigg( \frac{1}{ |\widehat{\sigma}_{n,S_{\sigma}}^{\,\init}| } \odot \sign\big(\sigma_{S_\sigma}^*\big) \bigg) \Bigg) + \frac{1}{n} \, \big(\X_{n,S_\sigma^c}^{\sigma}\big)^\top \proj{\XsigmaS} \, \epssigma \Bigg] \notag \\
		&= \Op{1} \,\op{1} +  \Op{1} =  \Op{1}\,. \label{proof:theorem:adaptive:LASSO:second:moments:2}
	\end{align} 
	Moreover, $\sqrt{n} \, \big( \widehat{\sigma}_{n}^{\,\init} - \sigma^* \big) = \Op{1}$ implies also $\sqrt{n} \, \widehat{\sigma}_{n,k}^{\,\init}= \Op{1} $ for all $k \in S_{\sigma}^c$ since $\sigma_k^* = 0$ for these $k$. Thus, by the second requirement $n \, \lambda_n^{\sigma} \to \infty$ on the regularization parameter it follows that
	\begin{align*}
	\frac{\sqrt{n} \, \lambda_{n}^{\sigma}}{\big|\widehat{\sigma}_{n,k}^{\,\init}\big|} = \frac{n \, \lambda_{n}^{\sigma}}{\sqrt{n} \,\big|\widehat{\sigma}_{n,k}^{\,\init}\big|}\stackrel{\Prob} \to \infty 
	\end{align*}
	for all $k \in S_{\sigma}^c$. Together with \eqref{proof:theorem:adaptive:LASSO:second:moments:2} this implies the first condition \eqref{primal:dual:witness:adaptive:lasso:1} of Lemma \ref{lemma:primal:dual:witness:adaptive:lasso} with high probability for a sufficient large number $n$ of observations. Furthermore, let
	\begin{align*}
	\widetilde{\sigma}_{n,S_\sigma} =  \sigma_{S_\sigma}^* + \bigg(\frac{1}{n} \, \big(\XsigmaS\big)^\top  \XsigmaS \bigg)^{-1} \Bigg( \frac{1}{n} \, \big(\XsigmaS\big)^\top \epssigma -  \lambda_n^{\sigma} \, \bigg( \frac{1}{ |\widehat{\sigma}_{n,S_{\sigma}}^{\,\init}| } \odot \sign\big(\sigma_{S_\sigma}^*\big) \bigg) \Bigg) \,.
	\end{align*}
	Then we obtain
	\begin{align*}
	\sqrt{n} \, \big( \widetilde{\sigma}_{n,S_\sigma} -  \sigma_{S_\sigma}^* \big) = \bigg(\frac{1}{n} \, \big(\XsigmaS\big)^\top  \XsigmaS \bigg)^{-1} \frac{1}{\sqrt{n}} \, \big(\XsigmaS\big)^\top \epssigma  + \op{1} 
	\end{align*}
	by 
	\eqref{proof:theorem:adaptive:LASSO:second:moments:1}. Moreover, with Lemmas \ref{lemma:gradient:bounded:second:moments:2} and \ref{lemma:gradient:bounded:second:moments:1} it follows that
	\begin{align}
	\sqrt{n} \, \big( \widetilde{\sigma}_{n,S_\sigma} -  \sigma_{S_\sigma}^* \big) &= \bigg(\frac{1}{n} \, \big(\XsigmaS\big)^\top  \XsigmaS \bigg)^{-1} \frac{1}{\sqrt{n}} \, \big(\XsigmaS\big)^\top \delta_n + \op{1} \label{proof:theorem:adaptive:LASSO:second:moments:4}  \\
	&= \Op{1} + \op{1} = \Op{1} \,, \notag
	\end{align}
	which leads to
$	\widetilde{\sigma}_{n,S_\sigma} -  \sigma_{S_\sigma}^*   = \op{1} \,.$
	Therefore the second condition, $\sign\big(\widetilde{\sigma}_{n,S_\sigma}\big)= \sign\big(\sigma_{S_\sigma}^*\big)$, of Lemma \ref{lemma:primal:dual:witness:adaptive:lasso} is also satisfied with high probability for large $n$. Sign-consistency of the adaptive LASSO and $\widehat{\sigma}_{n,S_\sigma}^{\,\AL} = \widetilde{\sigma}_{n,S_\sigma}$ is the consequence. 
	
	\medskip
	
	Note that for the asymptotic normality \eqref{asymptotic:normality:adaptive:LASSO:second:moments} of the rescaled estimation error only the first term in \eqref{proof:theorem:adaptive:LASSO:second:moments:4} is crucial. Hence we consider the random vectors
	\begin{align*}
	Z_n^{\sigma,1} = \frac{1}{\sqrt{n}} \, \big(\X_{n}^{\sigma}\big)^\top \delta_n = \frac{1}{\sqrt{n}} \sum_{i=1}^n \big( e_i^\top \delta_n \big) \, \ov (\f{X_i}) = \frac{1}{\sqrt{n}} \sum_{i=1}^n  \Big( \ov(\f{X_i})^\top \ovec \big(D_i-\Sigma^* \big) \Big) \, \ov (\f{X_i}) \,,
	\end{align*}
	where $D_i = \big(\f{A_i} - \mu^* \big) \big(\f{A_i}- \mu^*\big)^\top$ and $\delta_n$ is defined in \eqref{def:deltan}. Now we want to apply the Lindeberg-Feller central limit theorem for the array 
	\begin{align*}
	Q_{n,i} = \frac{1}{\sqrt{n}} \, \Big( \ov(\f{X_i})^\top \ovec \big(D_i-\Sigma^* \big) \Big) \, \ov (\f{X_i}) \,, \qquad i = 1,\dotsc,n\,,
	\end{align*}
	of random vectors. These are independent and identically distributed in each row (for fixed $n$) since $(\f{X_1^\top},\f{A_1^\top})^\top, \dotsc, (\f{X_n^\top}, \f{A_n^\top})^\top$ are independent and identically distributed. Furthermore, they are centered, 
	\begin{align*}
	\E\big[Q_{n,i}\big] = \frac{1}{\sqrt{n}} \, \E\bigg[ \E\Big[ \ov(\f{X_i})^\top \ovec \big(D_i-\Sigma^* \big) \, \Big| \, \Xsigma \Big] \, \ov (\f{X_i}) \bigg] = \frac{1}{\sqrt{n}} \, \E\big[  0 \cdot \ov (\f{X_i}) \big] = \f{0}_{p(p+1)/2} \, ,
	\end{align*}
	and for the sum of the covariance matrices 
	\begin{align*}
	\sum_{i=1}^{n} \cov \big( Q_{n,i} \big) = \cov \Bigg( \sum_{i=1}^{n}  Q_{n,i} \Bigg) = \cov\big( Z_n^{\sigma,1} \big)
	\end{align*}
	we get by Lemma \ref{lemma:gradient:bounded:second:moments:1}
	\begin{align*}
	\sum_{i=1}^{n} \cov \big( Q_{n,i} \big) = \mathrm{B}^{\sigma}  \,.
	\end{align*}
	Moreover, we obtain for arbitrary $\delta>0$ the equation
	\begin{align*}
	\sum_{i=1}^{n} \E \bigg[ \normzq{Q_{n,i}} ~ &\mathbbm{1} \big\{ \normz{Q_{n,i}} > \delta \big\} \bigg] = \E \bigg[ \ov \big( \f{X} \big)^\top \ovec \big(D-\Sigma^*\big) \, \ov \big( \f{X} \big)^\top \ovec \big(D-\Sigma^*\big) \, \ov \big( \f{X} \big)^\top \ov \big( \f{X} \big)  \\
	& \qquad \quad \cdot \mathbbm{1} \big\{ \ov ( \f{X} )^\top \ovec (D-\Sigma^*) \, \ov ( \f{X} )^\top \ovec (D-\Sigma^*) \, \ov ( \f{X} )^\top \ov ( \f{X} )  > \delta^2 n  \big\} \bigg] \, . 
	\end{align*}
	The expected mean $\E \big[ \ov ( \f{X} )^\top \ovec (D-\Sigma^*) \, \ov ( \f{X} )^\top \ovec (D-\Sigma^*) \, \ov ( \f{X} )^\top \ov ( \f{X} ) \big]$ exists because of Assumption \ref{ass:moments:second:moments} and the Cauchy Schwarz inequality. Thus we get
	\begin{align*}
	\lim_{n \to \infty} \sum_{i=1}^{n} \E \bigg[ \normzq{Q_{n,i}} ~ \mathbbm{1} \big\{ \normz{Q_{n,i}} > \delta \big\} \bigg] = 0 
	\end{align*}
	by Lebesgue's dominated convergence theorem, which coincides with Lindeberg's condition, see \citet[Proposition 2.27]{Vaart1998}. Hence the mentioned proposition implies the weak convergence
	\begin{align*}
	Z_n^{\sigma,1} = \frac{1}{\sqrt{n}} \, \big(\X_{n}^{\sigma}\big)^\top \delta_n = \sum_{i=1}^n Q_{n,i} ~ \stackrel{d} \longrightarrow ~ Q \sim \mathcal{N}_{{p(p+1)/2}} \Big( \f{0}_{p(p+1)/2} , \mathrm{B}^{\sigma} \Big)\,,
	\end{align*}
	respectively
	\begin{align*}
	\frac{1}{\sqrt{n}} \, \big(\XsigmaS\big)^\top \delta_n ~ \stackrel{d} \longrightarrow ~ Q_{S_\sigma} \sim \mathcal{N}_{s_\sigma} \big( \f{0}_{s_{\sigma}} , \mathrm{B}_{S_{\sigma} S_{\sigma}}^{\sigma} \big) \, .
	\end{align*}
	So all in all a multivariate version of Slutsky's theorem, see for example \citet[Theorem 2.7, Lemma 2.8]{Vaart1998}, together with equation \eqref{proof:theorem:adaptive:LASSO:second:moments:4} 
	leads to
	\begin{align*}
	\sqrt{n} \, \big( \widehat{\sigma}_{n,S_{\sigma}}^{\,\AL} -  \sigma_{S_{\sigma}}^* \big) ~ \stackrel{d} \longrightarrow ~  \big( \mathrm{C_{S_{\sigma} S_{\sigma} }^{\sigma}} \big)^{-1} \, Q_{S_\sigma}\,.
	\end{align*}
		In addition, it follows that
	\begin{align*}
		\big( \mathrm{C_{S_{\sigma} S_{\sigma} }^{\sigma}} \big)^{-1} \, Q_{S_\sigma} \sim \mathcal{N}_{s_\sigma} \Big( \f{0}_{s_{\sigma}} , \big( \mathrm{C_{S_{\sigma} S_{\sigma} }^{\sigma}} \big)^{-1} \mathrm{B}_{S_{\sigma} S_{\sigma}}^{\sigma} \big( \mathrm{C_{S_{\sigma} S_{\sigma} }^{\sigma}} \big)^{-1} \Big)
	\end{align*}
	by the symmetry of $\mathrm{C_{S_{\sigma} S_{\sigma} }^{\sigma}}$ and the properties of the multivariate normal distribution, and hence the asymptotic normality \eqref{asymptotic:normality:adaptive:LASSO:second:moments}.
\end{proof}

\begin{proof}[Proof of Lemma \ref{lemma:gradient:bounded:second:moments:2}]
We prove Lemma \ref{lemma:gradient:bounded:second:moments:2} in two steps. First we show that 
	\begin{align}\label{lemeq:helpone}
	\frac{1}{\sqrt{n}} \, \big(\Xsigma\big)^\top \zeta_n = \op{1} \, .
\end{align}
We obtain 
	\begin{align}
		\frac{1}{\sqrt{n}} \, \big(\Xsigma\big)^\top \zeta_n &= \frac{1}{\sqrt{n}} \sum_{i=1}^n \big( e_i^\top \zeta_n \big) \ov \big(\f{X_i}\big) = \frac{1}{\sqrt{n}} \sum_{i=1}^n \Big( \ov\big(\f{X_i}\big)^\top \ovec\big( E_n \big) \Big)  \ov \big(\f{X_i}\big) \notag\\
		&= \frac{1}{\sqrt{n}} \sum_{i=1}^n \bigg( \sum_{q=1}^{p(p+1)/2} \ov\big(\f{X_i}\big)_q \ovec\big( E_n \big)_q \bigg) \ov \big(\f{X_i}\big)  \notag\\
		&= \sum_{q=1}^{p(p+1)/2} \sqrt{n} \, \ovec\big( E_n \big)_q \bigg( \frac{1}{n} \sum_{i=1}^n \ov\big(\f{X_i}\big)_q \, \ov \big(\f{X_i}\big) \bigg)\, ,\label{proof:1:lemma:gradient:bounded:second:moments:3}
	\end{align}
	where 
	\begin{align*}
		E_n = \big( \mu^* - \widehat{\mu}_n \big) \big( \mu^* - \widehat{\mu}_n \big)^\top \,.
	\end{align*}
By the assumption on $\widehat{\mu}_{n}$ we get
$		e_k^\top E_n \, e_l = \big( \widehat{\mu}_{n,k} - \mu_k^* \big) \big( \widehat{\mu}_{n,l} - \mu_l^* \big) = \Op{1/n}$
	for $k,l \in \{1\dotsc,p\}$, 
	 and hence also
	\begin{align}
		\sqrt{n} \, \ovec\big( E_n \big)_q = \Op{\frac{1}{\sqrt{n}}} \label{proof:3:lemma:gradient:bounded:second:moments:3}
	\end{align}
	for all $q \in \{1,\dotsc,p(p+1)/2\}$. Furthermore, the random vectors
		$Q_i^q = \ov\big(\f{X_i}\big)_q \, \ov \big(\f{X_i}\big)$ 
	are independent and identically distributed with 
	\begin{align*}
		\E \Big[ \normz{Q_i^q} \Big] \leq \E \Big[\norme{Q_i^q} \Big] = \E \bigg[ \norme{ \ov\big(\f{X_i}\big)_q \, \ov \big(\f{X_i}\big) } \bigg] =  \sum_{r=1}^{p(p+1)/2} \E \bigg[ \Big| \ov \big(\f{X_i}\big)_r \, \ov\big(\f{X_i}\big)_q \Big| \bigg] < \infty  \,,
	\end{align*}   
so that by the law of large numbers 
	\begin{align}
		\frac{1}{n} \sum_{i=1}^n \ov\big(\f{X_i}\big)_q \, \ov \big(\f{X_i}\big)  = \Op{1} \label{proof:4:lemma:gradient:bounded:second:moments:3}
	\end{align}
	for all $q \in \{1,\dotsc,p(p+1)/2\}$ follows. In summary,	 \eqref{proof:1:lemma:gradient:bounded:second:moments:3}, \eqref{proof:3:lemma:gradient:bounded:second:moments:3} and \eqref{proof:4:lemma:gradient:bounded:second:moments:3} lead to \eqref{lemeq:helpone}. 
	
	\medskip 
	

%
%
%
In the second step, consider 
	\begin{align*}
		\frac{1}{\sqrt{n}} \, \big(\Xsigma\big)^\top \xi_n  &= \frac{1}{\sqrt{n}} \sum_{i=1}^n  \bigg( \sum_{q=1}^{p(p+1)/2} \ov\big(\f{X_i}\big)_q \ovec\big( F_{n,i} \big)_q \bigg) \ov \big(\f{X_i}\big)\,, 
	\end{align*}
	where
$		F_{n,i} = \big( \f{A_i} - \mu^* \big) \big( \mu^* - \widehat{\mu}_n \big)^\top + \big( \mu^* - \widehat{\mu}_n \big) \big( \f{A_i} - \mu^* \big)^\top \,.$
	Then we obtain analogously 
	\begin{align*}
		\frac{1}{\sqrt{n}} \, \big(\Xsigma\big)^\top \xi_n 
		&= \sum_{k,l=1}^{p}  \sqrt{n}\,\big( \widehat{\mu}_{n,{k}} - \mu_{k}^* \big) \bigg( -\frac{2}{n} \sum_{i=1}^n  X_{i,k} X_{i,l} \, \big( A_{i,{l}} -\mu_{l}^*\big) \, \ov \big(\f{X_i}\big) \bigg) \\
		&= \Op{1}\, \op{1} = \op{1},
	\end{align*}
since
\[ \E \Big[X_{1,k} X_{1,l} \, \big( A_{1,{l}} -\mu_{l}^*\big) \, \ov \big(\f{X_1}\big)\Big] = 0\]
by the independence of $\f{X_1}$ and $\f{A_1}$.
\end{proof}

\begin{proof}[Proof of Lemma \ref{lemma:gradient:bounded:second:moments:1}] 
	
	We obtain by simple calculation $\E\big[ \delta_n \, \big| \, \Xsigma \big] = \f{0}_{n} $ and
	$ \cov\big( \delta_n \, \big| \,\Xsigma \big) = \Omega_n^{\sigma}$, hence
	%
	\begin{align*}
		\E\big[ Z_n^{\sigma,1} \, \big| \, \Xsigma \big] =  \frac{1}{\sqrt{n}} \,  \big(\Xsigma\big)^\top \E\big[ \delta_n \, \big| \, \Xsigma \big] = \f{0}_{p(p+1)/2}
	\end{align*}
	and 
	\begin{align*}
		\cov\big( Z_n^{\sigma,1} \, \big| \, \Xsigma \big) = \frac{1}{n} \, \big(\Xsigma\big)^\top  \cov\big(\delta_n \, \big| \, \Xsigma \big) \,  \Xsigma = \frac{1}{n} \, \big(\Xsigma\big)^\top \Omega_n^{\sigma} \, \Xsigma   \,.
	\end{align*}
	For random variables $Q_1,Q_2$ and $Q_3$ the law of total covariance implies the decomposition
	\begin{align*}
		\cov\big(Q_1,Q_2\big) = \E \Big[\cov\big(Q_1,Q_2\,\big|\,Q_3\big)\Big] + \cov\Big(\E\big[Q_1\,\big|\,Q_3\big], \E\big[Q_2\,\big|\,Q_3\big]\Big) \, .
	\end{align*}
	This can be extended to random vectors and covariance matrices and hence we obtain
	\begin{align*}
		\cov\big(Z_n^{\sigma,1}\big) &=  \E\Big[\cov\big( Z_n^{\sigma,1} \, \big| \, \Xsigma \big)\Big] + \cov\Big(\E\big[ Z_n^{\sigma,1} \, \big| \, \Xsigma \big] \Big) \\
		&= \E\bigg[ \frac{1}{n} \, \big(\Xsigma\big)^\top \Omega_n^{\sigma} \, \Xsigma \bigg] = \mathrm{B}^\sigma\,.
	\end{align*} 
	%
	Boundedness in probability follows since by the law of large numbers, 
	\begin{align*}
		\frac{1}{n} \, \big(\Xsigma\big)^\top \Omega_n^{\sigma} \, \Xsigma = \frac{1}{n} \sum_{i=1}^{n}  \Big( \ov\big( \f{X_i} \big)^\top \Psi^* \, \ov\big( \f{X_i} \big) \Big) \, \ov\big( \f{X_i} \big) \, \ov\big( \f{X_i} \big)^\top \to \mathrm{B}^{\sigma} \,.
	\end{align*}
	
\end{proof}

\section{Supplement: The adaptive LASSO} \label{sec:appendix:estimators}

We look for a fixed number $n \in \N$ of observations at the ordinary linear regression model
\begin{align*}
\Y_n = \X_n \, \beta^* + \varepsilon_n \,, 
\end{align*}
where $\Y_n \in \R^n$ is the vector of the response variables, $\X_n \in \R^{n \times p}$ the deterministic design matrix, $\beta^* \in \R^p$ the unknown coefficient vector and $\varepsilon_n \in \R^n$ represents additive noise. Moreover, we allow the coefficients $\beta^*$ to be sparse, in other words it is $s \leq p $ for
\begin{align*}
S = \supp\big(\beta^*\big) = \Big\{ k \in \{1,\dotsc,p\} \, \big| \, \beta_k^* \neq 0 \Big\} \,, \quad \quad s = |S| \,.
\end{align*}  
In addition, let $S^c = \{1,\dotsc,p\} \setminus S$ be the relative complement of $S$. Because of the sparsity of the coefficients the linear regression model can also be expressed by 
\begin{align*}
\Y_n = \X_{n,S} \, \beta_S^* + \varepsilon_n \,.
\end{align*}

Consider the adaptive LASSO estimator with regularization parameter $\lambda_n > 0$, given by
\begin{align*}
\widehat{\beta}_n^{\,\AL} \in \rho_{n,\lambda_n}^{\,\AL} &\defeq \underset{\beta \in \R^p}{\arg\min} ~ \Bigg( \frac{1}{n} \, \normzq{\Y_n - \X_n \, \beta}  +  2 \lambda_n  \sum_{k=1}^p \frac{|\beta_k|}{\big|\widehat{\beta}_{n,k}^{\,\init}\big|} \Bigg) \,,
\end{align*}
where $\widehat{\beta}_n^{\,\init} \in \R^p$ is an initial estimator of $\beta^*$. If $\widehat{\beta}_{n,k}^{\,\init} = 0$, we require $\beta_k=0$ in the above definition.
%
%
\begin{lemma}[Primal-dual witness characterization of the adaptive LASSO] \label{lemma:primal:dual:witness:adaptive:lasso} 
Assume $s \leq n$ and $\rank(\X_{n,S}) = s$. If
\begin{align}
\Bigg| \X_{n,S^c}^\top \X_{n,S} \Big( \X_{n,S}^\top \X_{n,S} \Big)^{-1} \lambda_n \Bigg( \frac{1}{\big|\widehat{\beta}_{n,S}^{\,\init}\big|} \odot \sign\big(\beta_S^*\big) \Bigg) + \frac{1}{n} \,  \X_{n,S^c}^\top \orthproj{\X_{n,S}} \, \varepsilon_n \Bigg| < \frac{\lambda_n}{\big|\widehat{\beta}_{n,S^c}^{\,\init}\big|} \label{primal:dual:witness:adaptive:lasso:1}
\end{align}
with
\begin{align*}
\orthproj{\X_{n,S}} \defeq \mathrm{I}_n - \X_{n,S} \Big( \X_{n,S}^\top \X_{n,S} \Big)^{-1} \X_{n,S}^\top
\end{align*}
holds, and 
\begin{align*}
\widetilde{\beta}_{n,S} =  \beta_S^* + \bigg( \frac{1}{n} \, \X_{n,S}^\top \X_{n,S} \bigg)^{-1} \Bigg( \frac{1}{n} \, \X_{n,S}^\top \, \varepsilon_n - \lambda_n \bigg( \frac{1}{\big|\widehat{\beta}_{n,S}^{\,\init}\big|} \odot \sign\big(\beta_S^*\big) \bigg) \Bigg) 
\end{align*} 
satisfies $\sign\big(\widetilde{\beta}_{n,S}\big) = \sign\big(\beta_{S}^*\big)$, then the unique adaptive LASSO solution  $\rho_{n,\lambda_n}^{\,\AL} = \big\{ \widehat{\beta}_n^{\,\AL} \big\}$ satisfies
\begin{align*}
\sign\big(\widehat{\beta}_{n}^{\,\AL}\big) =  \sign\big(\beta^*\big)\, ,~ \widehat{\beta}_{n,S}^{\,\AL} = \widetilde{\beta}_{n,S} ~\mathrm{and}~ \widehat{\beta}_{n,S^c}^{\,\AL} = \f{0}_{|S^c|}\, .
\end{align*}
\end{lemma}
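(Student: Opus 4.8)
The plan is to run the \emph{primal--dual witness} (PDW) construction. Write the weighted $\ell_1$-penalty as $\Omega(\beta)=\sum_{k=1}^p|\beta_k|/|\widehat{\beta}^{\,\init}_{n,k}|$, so that the criterion $F(\beta)=\tfrac1n\normzq{\Y_n-\X_n\beta}+2\lambda_n\Omega(\beta)$ is convex. Hence $\widehat{\beta}$ minimizes $F$ if and only if there is a vector $\widehat z$ in the subdifferential of $\Omega$ at $\widehat\beta$ --- that is, $\widehat z_k=\sign(\widehat\beta_k)/|\widehat\beta^{\,\init}_{n,k}|$ whenever $\widehat\beta_k\neq 0$ and $|\widehat z_k|\leq 1/|\widehat\beta^{\,\init}_{n,k}|$ whenever $\widehat\beta_k=0$ --- with
\[
\tfrac1n\X_n^\top(\X_n\widehat\beta-\Y_n)+\lambda_n\widehat z=\f{0}_p .
\]
(Coordinates $k\in S^c$ with $\widehat\beta^{\,\init}_{n,k}=0$ are simply constrained to zero and drop out of the system; on $S$ the oracle vector $\widetilde\beta_{n,S}$ is well defined precisely because the relevant initial estimates are nonzero.) First I would set up the candidate: $\widehat\beta_{S^c}=\f{0}_{|S^c|}$, $\widehat\beta_S=\widetilde\beta_{n,S}$, $\widehat z_S=\tfrac{1}{|\widehat\beta^{\,\init}_{n,S}|}\odot\sign(\beta^*_S)$, and $\widehat z_{S^c}$ defined to be whatever the $S^c$-block of the stationarity equation forces.

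Next I would check the $S$-block. Substituting $\Y_n=\X_{n,S}\beta^*_S+\varepsilon_n$ and using $\rank(\X_{n,S})=s$, so that $\X_{n,S}^\top\X_{n,S}$ is invertible, the equation $\tfrac1n\X_{n,S}^\top(\X_{n,S}\widehat\beta_S-\Y_n)+\lambda_n\widehat z_S=\f{0}$ rearranges exactly to $\widehat\beta_S=\widetilde\beta_{n,S}$; thus the $S$-block holds by the very definition of $\widetilde\beta_{n,S}$. Moreover the hypothesis $\sign(\widetilde\beta_{n,S})=\sign(\beta^*_S)$ is what guarantees $\widehat z_S=\tfrac{1}{|\widehat\beta^{\,\init}_{n,S}|}\odot\sign(\widehat\beta_S)$, i.e. that $\widehat z_S$ is a legitimate subgradient block for $\widehat\beta_S\neq \f{0}$.

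Then I would treat the $S^c$-block. Solving it for $\widehat z_{S^c}$, plugging in $\widehat\beta_S-\beta^*_S=(\X_{n,S}^\top\X_{n,S})^{-1}\X_{n,S}^\top\varepsilon_n-n\lambda_n(\X_{n,S}^\top\X_{n,S})^{-1}\widehat z_S$, and collecting the terms into the orthogonal projector $\orthproj{\X_{n,S}}=\mathrm{I}_n-\X_{n,S}(\X_{n,S}^\top\X_{n,S})^{-1}\X_{n,S}^\top$, yields
\[
\lambda_n\widehat z_{S^c}=\X_{n,S^c}^\top\X_{n,S}\big(\X_{n,S}^\top\X_{n,S}\big)^{-1}\lambda_n\Big(\tfrac{1}{|\widehat\beta^{\,\init}_{n,S}|}\odot\sign(\beta^*_S)\Big)+\tfrac1n\X_{n,S^c}^\top\orthproj{\X_{n,S}}\varepsilon_n,
\]
which is exactly the left-hand side of the assumed strict inequality \eqref{primal:dual:witness:adaptive:lasso:1}. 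Hence that hypothesis says $|\widehat z_{S^c}|<\tfrac{1}{|\widehat\beta^{\,\init}_{n,S^c}|}$ componentwise --- strict dual feasibility --- so $\widehat z_{S^c}$ is an admissible subgradient block for $\widehat\beta_{S^c}=\f{0}$. Together with the $S$-block check, $(\widehat\beta,\widehat z)$ solves the optimality system, so $\widehat\beta$ is a minimizer of $F$ with $\widehat\beta_S=\widetilde\beta_{n,S}$, $\widehat\beta_{S^c}=\f{0}_{|S^c|}$, and the asserted sign pattern.

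Finally I would argue uniqueness. The loss $\beta\mapsto\tfrac1n\normzq{\Y_n-\X_n\beta}$ is a strictly convex function of the fitted vector $\X_n\beta$, so any two minimizers $\beta',\beta''$ of $F$ must satisfy $\X_n\beta'=\X_n\beta''$ (otherwise their average would strictly reduce the loss while not increasing $\Omega$, by convexity of $\Omega$, contradicting optimality). Consequently $\X_n^\top(\X_n\beta-\Y_n)$, and hence the subgradient $\widehat z$ appearing in the stationarity equation, is common to all minimizers; since its $S^c$-entries are strictly bounded by $\tfrac{1}{|\widehat\beta^{\,\init}_{n,S^c}|}$, every minimizer must vanish off $S$, and then $\X_{n,S}\beta_S=\X_n\widehat\beta=\X_{n,S}\widetilde\beta_{n,S}$ with $\rank(\X_{n,S})=s$ forces $\beta_S=\widetilde\beta_{n,S}$. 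Thus $\rho_{n,\lambda_n}^{\,\AL}=\{\widehat\beta\}$, as claimed. I expect the main obstacle to be exactly this uniqueness step --- making precise that the subgradient is shared by all optimizers and handling the weighted (rather than plain) $\ell_1$-subdifferential, including the infinite-weight coordinates --- whereas the $S$- and $S^c$-block verifications are just algebraic rearrangements of the KKT system.
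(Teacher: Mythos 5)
Your proof is correct and follows exactly the primal--dual witness / KKT construction that the lemma's name advertises: verify the stationarity equation on the $S$-block (which is equivalent to $\widehat\beta_S=\widetilde\beta_{n,S}$), check strict dual feasibility on $S^c$ (your expression for $\lambda_n\widehat z_{S^c}$ matches the left-hand side of \eqref{primal:dual:witness:adaptive:lasso:1}), and deduce uniqueness from the shared fitted value and shared subgradient of all minimizers together with $\rank(\X_{n,S})=s$. The paper itself does not prove the lemma but only cites Lemma~12.1 of \citet{Zhou2009} with weights $w_k=1/|\widehat{\beta}_{n,k}^{\,\init}|$; your argument is a correct self-contained version of that same approach, including the sensible handling of coordinates with $\widehat{\beta}_{n,k}^{\,\init}=0$.
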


\begin{proof} 
Cf. Lemma 12.1 in \citet{Zhou2009} with $\vec{w}=\big(1/|\widehat{\beta}_{n,1}^{\, \init}|,\dotsc,1/|\widehat{\beta}_{n,p}^{\, \init}| \big)^\top \in \R^p$.
\end{proof}         

\section{Supplement: Estimating the means with diverging number $p$ of parameters} \label{sec:appendix:meandiv}

The model is given in vector-matrix form by
\begin{align*}
	\Ymu = \Xmu \, \mu^* + \epsmu \,, 
\end{align*}
%
%
where
\begin{align*}
	\Ymu &\defeq \big( Y_1 , \dotsc, Y_n  \big)^\top ,\quad 
	\Xmu \defeq \big[ \f{X_1}, \dotsc, \f{X_n} \big]^\top , \quad
	\epsmu \defeq \Big( \f{X_1^\top} \big( \f{A_1} - \mu^* \big) ,\dotsc, 
	\f{X_n^\top} \big( \f{A_n} - \mu^* \big)  \Big)^\top. 
\end{align*}

Then the adaptive LASSO estimator with regularization parameter $\lambda_n^{\mu}>0$ is given by
\begin{align}
	\widehat{\mu}_n^{\,\AL} \in \rho_{\mu,n,\lambda_n^{\mu}}^{\,\AL} &\defeq \underset{\beta \in \R^{p} }{\arg\min} ~ \Bigg( \frac{1}{n} \, \normzq{\Ymu - \Xmu \, \beta} +  2 \lambda_n^{\mu}  \sum_{k=1}^{p} \frac{|\beta_k|}{\big|\widehat{\mu}_{n,k}^{\,\init}\big|} \Bigg) \,, \label{def:adaptive:LASSO:first:moments}
\end{align}
where $\widehat{\mu}_n^{\,\init} \in \R^p$ is an initial estimator of $\mu^*$. Note that if $\widehat{\mu}_{n,k}^{\,\init} = 0$, we require again $\beta_k=0$. Further, let
\begin{align*}
\mathrm{C}^{\mu} \defeq \E\big[ \f{X} \, \f{X^\top}\big]\,, \qquad \qquad \mathrm{B}^{\mu} \defeq \E\Big[ \big( \f{X^\top} \Sigma^* \, \f{X} \big) \,\f{X} \, \f{X^\top}\Big] \,,
\end{align*}
and we denote by
\begin{align*}
 S_{\mu} &\defeq \supp(\mu^*) = \Big\{ k \in \{1,\dotsc,p\} \, \big| \, \mu_k^* \neq 0 \Big\}\,, \qquad s_{\mu} \defeq | S_{\mu} |\,,
\end{align*}
the support of the mean vector $\mu^*$. $S_{\mu}^c \defeq \{1,\dotsc,p\} \setminus S_{\mu}$ is again the corresponding relative complement.

\begin{assumption}[Growing $p$] \label{ass:moments:first:moments:growing} 
	We assume that $(\f{X_i^\top}, \f{A_i^\top})^\top$, $i=1, \ldots, n$, are identically distributed, and that 
	\begin{enumerate}[label=\normalfont{(A\arabic*)},leftmargin=9.9mm]
		\setcounter{enumi}{\value{temp1}}
		\item the random coefficients $\f{A}$ have finite second moments,
		\item the covariate vector $\f{X}$ is sub-Gaussian, \label{ass:regressors:subgaussian:1}
		\item $\Cmul \leq \lambda_{\min} \big( \mathrm{C}^{\mu} \big) \leq \lambda_{\max} \big( \mathrm{C}^{\mu} \big) \leq \Cmuu
		$ for some positive constants $ 0 < \Cmul \leq \Cmuu < \infty$, where $\lambda_{\min}(A)$ and $\lambda_{\max}(A)$ denote the minimal and maximal eigenvalues of a symmetric matrix $A$, \label{ass:first:moments:C}
		\item  $\lambda_{\max} \big(\mathrm{B}^{\mu} \big) \leq \Bmuu$ for some positive constant $\Bmuu > 0$, \label{ass:first:moments:B}
		\item $\lim_{n\to \infty} p / n = 0$. \label{ass:first:moments:limit:pn} 
		\setcounter{temp1}{\value{enumi}}
	\end{enumerate}
\end{assumption}

\begin{theorem}[Variable selection for growing $p$] \label{theorem:adaptive:LASSO:first:moments:growing} 
%
%
Let Assumption \ref{ass:moments:first:moments:growing} be satisfied, and assume that for the initial estimator $\widehat{\mu}_{n}^{\,\init}$ in the adaptive LASSO $\widehat{\mu}_n^{\,\AL}$ in \eqref{def:adaptive:LASSO:first:moments} we have \linebreak $\sqrt{n/p} \, \normz{ \widehat{\mu}_{n}^{\,\init} - \mu^* } = \Op{1}$. Moreover, if the regularization parameter is chosen as $\lambda_n^{\mu} \to 0$, 
\begin{align*}
	\sqrt{s_\mu \, n} \, \lambda_n^{\mu} \, / (\mu_{\min}^* \, \sqrt{p}) \to 0 \,, \quad \sqrt{p}/(\mu_{\min}^* \, \sqrt{n}) \to 0\,,\quad n\,\lambda_n^\mu/p \to \infty
    \end{align*} 
with $\mu_{\min}^* \defeq \min_{k \in S_{\mu}} |\mu_k^*|$, then it follows that $\widehat{\mu}_n^{\,\AL}$ is sign-consistent, 
\begin{align}
	\mathbb{P}\Big( \sign\big(\widehat{\mu}_n^{\,\AL}\big) = \sign\big(\mu^*\big)\Big) \to 1 \, . \label{sign:consistency:adaptive:LASSO:first:moments:growing}
\end{align}
\end{theorem}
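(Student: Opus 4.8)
The plan is to carry over the primal--dual witness scheme from the proof of Theorem~\ref{theorem:adaptive:LASSO:second:moments:growing}, now applying Lemma~\ref{lemma:primal:dual:witness:adaptive:lasso} with design matrix $\Xmu$, target $\mu^*$ and noise $\epsmu$. The key simplification is that $\widehat{\mu}_n^{\,\AL}$ estimates $\mu^*$ directly, so $\epsmu$ has entries $\f{X_i^\top}(\f{A_i}-\mu^*)$ with no plug-in residuals: there is no counterpart of the decomposition $\epssigma=\delta_n+\zeta_n+\xi_n$, and hence no analogue of the delicate Lemma~\ref{lemma:gradient:bounded:second:moments:3:growing} that forced $p^4/n\to0$ --- only $p/n\to0$ from \ref{ass:first:moments:limit:pn} is needed. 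Everything is scaled by $\sqrt{n/p}$ (rather than $\sqrt n/p$, which was the right scaling in the second-moment problem only because its design has $\asymp p^2$ columns), matching the assumed $\sqrt{n/p}$-rate of $\widehat{\mu}_n^{\,\init}$.

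Two ingredients are needed. First, control of the Gram matrix: by \ref{ass:regressors:subgaussian:1} and \citet[Theorem~6.5]{Wainwright2019},
\[
\normzM{\tfrac{1}{n}(\Xmu)^\top\Xmu-\mathrm{C}^{\mu}}=\Op{\sqrt{p/n}},
\]
which with \ref{ass:first:moments:C} and \ref{ass:first:moments:limit:pn} gives invertibility of the Gram matrix for large $n$ and, via \citet[Lemma~11]{Loh2017}, $\normzM{(\tfrac{1}{n}(\Xmu)^\top\Xmu)^{-1}-(\mathrm{C}^{\mu})^{-1}}=\Op{\sqrt{p/n}}$; in particular $\normzM{(\tfrac{1}{n}(\Xmu)^\top\Xmu)^{-1}}=\Op{1}$ and $\normzM{(\X_{n,S_\mu^c}^{\mu})^\top\XmuS((\XmuS)^\top\XmuS)^{-1}}=\Op{1}$. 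Second, the noise gradient: conditionally on $\Xmu$, $\epsmu$ is centered with diagonal covariance whose entries are $\f{X_i^\top}\Sigma^*\f{X_i}$, so exactly as in Lemma~\ref{lemma:gradient:bounded:second:moments:1:growing},
\[
\E\Big[\normzq{\tfrac{1}{n}(\Xmu)^\top\epsmu}\Big]=\tfrac{1}{n}\tr(\mathrm{B}^{\mu})\le\frac{\Bmuu\,p}{n}
\]
by \ref{ass:first:moments:B}, and Markov's inequality yields $\normz{\tfrac{1}{n}(\Xmu)^\top\epsmu}=\Op{\sqrt{p/n}}$, hence $\sqrt{n/p}\,\normz{\tfrac{1}{n}(\Xmu)^\top\epsmu}=\Op{1}$.

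With these in hand I would verify the two hypotheses of Lemma~\ref{lemma:primal:dual:witness:adaptive:lasso}. Since $\widehat{\mu}_{n,\min}^{\,\init}/\mu_{\min}^*\stackrel{\Prob}{\longrightarrow}1$ --- because $\normz{\widehat{\mu}_n^{\,\init}-\mu^*}/\mu_{\min}^*=\Op{\sqrt{p}/(\mu_{\min}^*\sqrt n)}=\op{1}$ by the second rate condition --- the penalty direction obeys
\[
\sqrt{n/p}\,\normz{\lambda_n^{\mu}\Big(\tfrac{1}{|\widehat{\mu}_{n,S_\mu}^{\,\init}|}\odot\sign(\mu_{S_\mu}^*)\Big)}\le\frac{\sqrt{s_\mu\,n}\,\lambda_n^{\mu}}{\mu_{\min}^*\sqrt p}\,\Op{1}=\op{1}
\]
by the first rate condition; combined with the Gram bounds and the noise bound, $\sqrt{n/p}$ times the left-hand side of \eqref{primal:dual:witness:adaptive:lasso:1} is $\Op{1}$. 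For the right-hand side, $k\in S_\mu^c$ gives $|\widehat{\mu}_{n,k}^{\,\init}|=|\widehat{\mu}_{n,k}^{\,\init}-\mu_k^*|\le\normz{\widehat{\mu}_n^{\,\init}-\mu^*}$, so
\[
\sqrt{n/p}\,\frac{\lambda_n^{\mu}}{|\widehat{\mu}_{n,k}^{\,\init}|}\ge\frac{n\lambda_n^\mu/p}{\sqrt{n/p}\,\normz{\widehat{\mu}_n^{\,\init}-\mu^*}}\stackrel{\Prob}{\longrightarrow}\infty
\]
by $n\lambda_n^\mu/p\to\infty$, so \eqref{primal:dual:witness:adaptive:lasso:1} holds with probability tending to one. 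For the sign condition, the oracle vector
\[
\widetilde{\mu}_{n,S_\mu}=\mu_{S_\mu}^*+\Big(\tfrac{1}{n}(\XmuS)^\top\XmuS\Big)^{-1}\Big(\tfrac{1}{n}(\XmuS)^\top\epsmu-\lambda_n^{\mu}\Big(\tfrac{1}{|\widehat{\mu}_{n,S_\mu}^{\,\init}|}\odot\sign(\mu_{S_\mu}^*)\Big)\Big)
\]
satisfies $\sqrt{n/p}\,\normz{\widetilde{\mu}_{n,S_\mu}-\mu_{S_\mu}^*}=\Op{1}$ by the same estimates, hence $\normz{\widetilde{\mu}_{n,S_\mu}-\mu_{S_\mu}^*}/\mu_{\min}^*=\op{1}$, so $\sign(\widetilde{\mu}_{n,S_\mu})=\sign(\mu_{S_\mu}^*)$ with probability tending to one; Lemma~\ref{lemma:primal:dual:witness:adaptive:lasso} then delivers \eqref{sign:consistency:adaptive:LASSO:first:moments:growing}.

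I do not anticipate a genuine obstacle: the ingredients (concentration, law of large numbers, Markov bounds, operator-norm algebra) form a strict subset of what Theorem~\ref{theorem:adaptive:LASSO:second:moments:growing} already required. The only point demanding care is the bookkeeping of the possibly vanishing $\mu_{\min}^*$: one must match every occurrence of $1/|\widehat{\mu}_{n,k}^{\,\init}|$ or $\mu_{\min}^*$ with exactly one of the three regularization conditions, and keep the scaling factor $\sqrt{n/p}$ consistent throughout.
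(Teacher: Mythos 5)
Your proposal is correct and follows essentially the same route as the paper's proof: the primal--dual witness lemma with the $\sqrt{n/p}$ scaling, sub-Gaussian concentration of the Gram matrix plus the Loh--Wainwright inversion bound, the trace-of-$\mathrm{B}^{\mu}$/Markov bound on $\tfrac{1}{n}(\Xmu)^\top\epsmu$, and the same allocation of the three regularization conditions to the penalty direction, the dual feasibility gap, and the sign condition. Your explicit division of $\normz{\widetilde{\mu}_{n,S_\mu}-\mu_{S_\mu}^*}$ by $\mu_{\min}^*$ in the last step is in fact slightly more careful than the paper's phrasing, but the argument is the same.
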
  

For the proof of Theorem \ref{theorem:adaptive:LASSO:first:moments:growing} we need the following auxiliary lemma.

\begin{lemma} \label{lemma:gradient:bounded:first:moments:growing} 
	%
	Set $Z_n^{\mu} = \frac{1}{n} \, \big(\Xmu\big)^\top \epsmu$, then $\normz{Z_n^{\mu}} = \operatorname{\mathcal{O}_\mathbb{P}} \big(\sqrt{p/n} \big) $. 
\end{lemma}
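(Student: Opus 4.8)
The plan is to mimic the proof of Lemma~\ref{lemma:gradient:bounded:second:moments:1:growing} (together with the conditioning computation of Lemma~\ref{lemma:gradient:bounded:second:moments:1}): bound the second moment $\E\big[\normzq{Z_n^{\mu}}\big]$ by a constant multiple of $p/n$, and then invoke Markov's inequality. Writing $Z_n^{\mu} = \tfrac1n \sum_{i=1}^n \f{X_i}\,\f{X_i^\top}\big(\f{A_i}-\mu^*\big)$, I would first use that a scalar equals its trace, the cyclic property of the trace, and the tower rule to obtain
\[
\E\big[\normzq{Z_n^{\mu}}\big] = \frac{1}{n^2}\,\E\big[(\epsmu)^\top \Xmu (\Xmu)^\top \epsmu\big] = \frac{1}{n^2}\,\E\Big[\tr\big((\Xmu)^\top\, \E[\epsmu (\epsmu)^\top \mid \Xmu]\, \Xmu\big)\Big].
\]

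Second, since the $(\f{X_i^\top},\f{A_i^\top})^\top$ are independent across $i$, each $\f{A_i}$ is independent of $\f{X_i}$, and $\E[\f{A}-\mu^*]=\f{0}_p$, the matrix $\Omega_n^{\mu} \defeq \E[\epsmu (\epsmu)^\top \mid \Xmu]$ is diagonal with $i$-th diagonal entry $\f{X_i^\top}\Sigma^*\f{X_i}$, the off-diagonal entries vanishing because $\E\big[\f{X_i^\top}(\f{A_i}-\mu^*)\,\f{X_j^\top}(\f{A_j}-\mu^*)\mid \Xmu\big]=0$ for $i\ne j$. Hence, by the identical distribution of the rows, $\E\big[\tfrac1n (\Xmu)^\top \Omega_n^{\mu}\Xmu\big] = \E\big[(\f{X^\top}\Sigma^*\f{X})\,\f{X}\f{X^\top}\big] = \mathrm{B}^{\mu}$, so that
\[
\E\big[\normzq{Z_n^{\mu}}\big] = \frac{\tr(\mathrm{B}^{\mu})}{n} \;\le\; \frac{p\,\lambda_{\max}(\mathrm{B}^{\mu})}{n} \;\le\; \frac{\Bmuu\, p}{n}
\]
by Assumption~\ref{ass:first:moments:B} (the entries of $\mathrm{B}^{\mu}$ are finite by the sub-Gaussianity of $\f{X}$ in Assumption~\ref{ass:regressors:subgaussian:1} and the assumed finiteness of the second moments of $\f{A}$). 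Markov's inequality then yields $\Prob\big(\normz{Z_n^{\mu}} \ge t\sqrt{p/n}\big) \le \Bmuu/t^2$ for every $t>0$, which is exactly $\normz{Z_n^{\mu}} = \Op{\sqrt{p/n}}$.

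I do not anticipate a genuine obstacle here; the only step deserving an explicit line is the vanishing of the cross terms in $\Omega_n^{\mu}$, which follows from independence across observations and the centering $\E[\f{A}-\mu^*]=\f{0}_p$, while the remainder is routine trace and operator-norm bookkeeping. This lemma is a light preliminary feeding the primal--dual witness argument for Theorem~\ref{theorem:adaptive:LASSO:first:moments:growing}, so I expect the whole proof to take only a few lines.
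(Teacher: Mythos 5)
Your proposal is correct and follows essentially the same route as the paper: compute $\E\big[\normzq{Z_n^{\mu}}\big]$ via the trace and tower rule, identify the conditional covariance $\Omega_n^{\mu}$ as diagonal with entries $\f{X_i^\top}\Sigma^*\f{X_i}$, bound $\tr(\mathrm{B}^{\mu})/n$ by $\Bmuu\, p/n$ using Assumption \ref{ass:first:moments:B}, and conclude with Markov's inequality. Your explicit justification of the vanishing cross terms is a small addition the paper leaves implicit, but the argument is the same.
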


\begin{proof}[Proof of Lemma \ref{lemma:gradient:bounded:first:moments:growing}]
	It is
	\begin{align*}
		\E \Big[ \normzq{Z_n^{\mu}} \Big] &= \frac{1}{n^2} \, \E \Big[ (\epsmu)^\top \Xmu \, (\Xmu)^\top \epsmu \Big] = \frac{1}{n^2} \, \E \Big[ \tr \big( (\Xmu)^\top \epsmu \, (\epsmu)^\top \Xmu \big) \Big] \\
		&= \frac{1}{n^2} \, \E \Big[ \tr \big( (\Xmu)^\top \E\big[ \epsmu \, (\epsmu)^\top \, \big| \,\Xmu \big] \, \Xmu \big) \Big]\\
		&=   \frac{1}{n} \, \tr \Bigg( \E \bigg[ \frac{1}{n} \, (\Xmu)^\top \Omega_n^{\mu} \,\Xmu \bigg] \Bigg) \,,
	\end{align*}
	where $\Omega_n^{\mu}=\cov\big(\epsmu\, \big| \,\Xmu \big)$ is a diagonal matrix with entries $\f{X_1^\top} \Sigma^* \, \f{X_1^\top} ,\dotsc,\f{X_n^\top} \Sigma^* \, \f{X_n^\top}$. It is obvious that
	\begin{align*}
		\E \bigg[ \frac{1}{n} \, (\Xmu)^\top \Omega_n^{\mu} \,\Xmu \bigg] = \mathrm{B}^\mu\,,
	\end{align*}
	and hence we obtain by Assumption \ref{ass:first:moments:B} the estimate
	\begin{align*}
		\E \Big[ \normzq{Z_n^{\mu}} \Big] &= \frac{\tr \big( \mathrm{B}^\mu \big)}{n} \leq \frac{\lambda_{\max} \big(\mathrm{B}^{\mu} \big) \, p}{n} \leq \frac{\Bmuu \, p}{n} \,. 
	\end{align*}
	Markov's inequality implies the assertion.
\end{proof}

\begin{proof}[Proof of Theorem \ref{theorem:adaptive:LASSO:first:moments:growing}]
We shall use the primal-dual witness characterization of the adaptive LASSO in Lemma \ref{lemma:primal:dual:witness:adaptive:lasso} in Section \ref{sec:appendix:estimators} to prove the sign-consistency \eqref{sign:consistency:adaptive:LASSO:first:moments:growing}. We obtain by Assumption \ref{ass:regressors:subgaussian:1} and \citet[Theorem 6.5]{Wainwright2019} that
\begin{align*}
\normzM{\frac{1}{n} (\Xmu)^\top \Xmu - \mathrm{C}^\mu} = \Op{\sqrt{p/n}}\,,
\end{align*}
which implies together with the Assumptions \ref{ass:first:moments:C} and \ref{ass:first:moments:limit:pn} the invertibility of the Gram matrix for large $n$, and hence by \citet[Lemma 11]{Loh2017} we get also
\begin{align*}
	\normzM{ \bigg( \frac{1}{n} (\Xmu)^\top \Xmu \bigg)^{-1} - \big(\mathrm{C}^\mu\big)^{-1}} = \Op{\sqrt{p/n}}\,.
\end{align*}
Furthermore, basic properties of the $\ell_2$ operator norm lead to
\begin{align*}
\normzM{\big(\X_{n,S_\mu^c}^{\mu}\big)^\top  \XmuS \Big( \big(\XmuS\big)^\top  \XmuS \Big)^{-1} - \mathrm{C}_{S_\mu^c S_\mu}^\mu \big(\mathrm{C}_{S_\mu S_\mu}^\mu\big)^{-1}} = \Op{\sqrt{p/n}} \,.
\end{align*} 
In particular, this implies
\begin{align}
\normzM{ \bigg( \frac{1}{n} (\Xmu)^\top \Xmu \bigg)^{-1}} = \Op{1}\,, \qquad \normzM{\big(\X_{n,S_\mu^c}^{\mu}\big)^\top  \XmuS \Big( \big(\XmuS\big)^\top  \XmuS \Big)^{-1}} = \Op{1}\,. \label{proof:first:growing:1}
\end{align}

Moreover, let $\widehat{\mu}_{n,\min}^{\,\init} \defeq \min_{k \in S_{\mu}} |\widehat{\mu}_{n,k}^{\,\init}|$, then
\begin{align*}
\bigg| \frac{\widehat{\mu}_{n,\min}^{\,\init} - \mu_{\min}^*}{\mu_{\min}^*} \bigg| \leq \frac{1}{\mu_{\min}^*} \, \normz{\widehat{\mu}_{n}^{\,\init} - \mu^*} = \Op{\frac{\sqrt{p}}{\mu_{\min}^* \, \sqrt{n}}} = \op{1}
\end{align*}
since $\sqrt{n/p} \, \normz{ \widehat{\mu}_{n}^{\,\init} - \mu^* } = \Op{1}$ and $\sqrt{p}/(\mu_{\min}^* \, \sqrt{n}) \to 0$. This implies
\begin{align*}
\bigg( 1 + \frac{\widehat{\mu}_{n,\min}^{\,\init} - \mu_{\min}^*}{\mu_{\min}^*}\bigg)^{-1} = \Op{1}\,,
\end{align*}
and hence we obtain
\begin{align}
\sqrt{\frac{n}{p}} \, \normz{\lambda_n^{\mu} \, \bigg( \frac{1}{ |\widehat{\mu}_{n,S_{\mu}}^{\,\init}|} \odot \sign\big(\mu_{S_\mu}^*\big) \bigg)} &\leq \frac{\sqrt{n} \, \lambda_n^{\mu}}{\sqrt{p}} \, \normz{\frac{1}{ |\widehat{\mu}_{n,S_{\mu}}^{\,\init}|}} \leq \frac{\sqrt{s_\mu \, n} \, \lambda_n^{\mu}}{\sqrt{p}} \, \normi{\frac{1}{ |\widehat{\mu}_{n,S_{\mu}}^{\,\init}|}} \notag \\
&= \frac{\sqrt{s_\mu \, n} \, \lambda_n^{\mu}}{\sqrt{p}} \, \big(\widehat{\mu}_{n,\min}^{\,\init}\big)^{-1} \notag \\
&= \frac{\sqrt{s_\mu \, n} \, \lambda_n^{\mu}}{\sqrt{p}} \, \big(\mu_{\min}^*\big)^{-1} \, \bigg(1 + \frac{\widehat{\mu}_{n,\min}^{\,\init} - \mu_{\min}^*}{\mu_{\min}^*}\bigg)^{-1} \notag  \\
&= \frac{\sqrt{s_\mu \, n} \, \lambda_n^{\mu}}{\mu_{\min}^* \,\sqrt{p}} \, \Op{1} \notag \\
&= \op{1} \label{proof:first:growing:3}
\end{align}
since $\sqrt{s_\mu \, n} \, \lambda_n^{\mu} / (\mu_{\min}^* \,\sqrt{p}) \to 0 $ by assumption. It follows that
	\begin{align}
	&\sqrt{\frac{n}{p}} \, \normz{ \big(\X_{n,S_\mu^c}^{\mu}\big)^\top  \XmuS \Big( \big(\XmuS\big)^\top  \XmuS \Big)^{-1} \Bigg( \lambda_n^{\mu} \, \bigg( \frac{1}{|\widehat{\mu}_{n,S_{\mu}}^{\,\init}| } \odot \sign\big(\mu_{S_\mu}^*\big) \bigg) \Bigg) + \frac{1}{n} \, \big(\X_{n,S_\mu^c}^{\mu}\big)^\top \proj{\XmuS} \, \epsmu } \notag \\
	& \quad \leq \normzM{ \big(\X_{n,S_\mu^c}^{\mu}\big)^\top  \XmuS \Big( \big(\XmuS\big)^\top  \XmuS \Big)^{-1}} \, \sqrt{\frac{n}{p}} \, \normz{ \lambda_n^{\mu} \, \bigg( \frac{1}{|\widehat{\mu}_{n,S_{\mu}}^{\,\init}| } \odot \sign\big(\mu_{S_\mu}^*\big) \bigg)}\notag \\
	& \quad \quad  + \sqrt{\frac{n}{p}} \, \normz{\frac{1}{n} \, \big(\X_{n,S_\mu^c}^{\mu}\big)^\top  \epsmu} + \normzM{ \big(\X_{n,S_\mu^c}^{\mu}\big)^\top  \XmuS \Big( \big(\XmuS\big)^\top  \XmuS \Big)^{-1}} \, \sqrt{\frac{n}{p}} \, \normz{\frac{1}{n} \, \big(\X_{n,S_\mu}^{\mu}\big)^\top  \epsmu} \notag \\
	& \quad = \Op{1} \,\op{1} +  \Op{1} + \Op{1} \notag \\
	& \quad = \Op{1} \label{proof:first:growing:2}
\end{align}         
by Lemma \ref{lemma:gradient:bounded:first:moments:growing} and \eqref{proof:first:growing:1}, where 
\begin{align*}
	\proj{\XmuS} = \mathrm{I}_n - \XmuS \Big( \big(\XmuS\big)^\top \XmuS \Big)^{-1} \big(\XmuS\big)^\top \,.
\end{align*}
Furthermore, it is 
\begin{align*}
	\frac{\big|\widehat{\mu}_{n,k}^{\,\init}\big|}{\lambda_{n}^{\mu}} \leq \frac{\big\|\widehat{\mu}_{n,S_\mu^c}^{\,\init}\big\|_2 }{\lambda_{n}^{\mu}} = \frac{\big\|\widehat{\mu}_{n,S_\mu^c}^{\,\init} - \mu_{S_\mu^c}^*\big\|_2 }{\lambda_{n}^{\mu}} \leq \frac{\big\|\widehat{\mu}_{n}^{\,\init} - \mu^*\big\|_2 }{\lambda_{n}^{\mu}} = \frac{ \sqrt{n/p} \, \big\|\widehat{\mu}_{n}^{\,\init} - \mu^*\big\|_2 }{ \sqrt{n/p} \, \lambda_{n}^{\mu}}
\end{align*}
for all $k \in S^c$. The condition $\sqrt{n/p} \, \normz{ \widehat{\mu}_{n}^{\,\init} - \mu^* } = \Op{1}$ together with $n\,\lambda_n^\mu/p \to \infty$ implies the convergence
\begin{align*}
	\frac{\big|\widehat{\mu}_{n,k}^{\,\init}\big|}{ \sqrt{n/p} \, \lambda_{n}^{\mu}} = \frac{1}{n\,\lambda_n^\mu/p} \, \Op{1} = \op{1} \,. 
\end{align*}
Hence it follows by \eqref{proof:first:growing:2} that the first condition \eqref{primal:dual:witness:adaptive:lasso:1} of Lemma \ref{lemma:primal:dual:witness:adaptive:lasso} is satisfied with high probability for a sufficient large sample size $n$. Furthermore, let
\begin{align*}
	\widetilde{\mu}_{n,S_\mu} =  \mu_{S_\mu}^* + \bigg(\frac{1}{n} \, \big(\XmuS\big)^\top  \XmuS \bigg)^{-1} \Bigg( \frac{1}{n} \, \big(\XmuS\big)^\top \epsmu -  \lambda_n^{\mu} \, \bigg( \frac{1}{ |\widehat{\mu}_{n,S_{\mu}}^{\,\init}|} \odot \sign\big(\mu_{S_\mu}^*\big) \bigg) \Bigg) \,.
\end{align*}
Then we obtain
\begin{align*}
	\sqrt{\frac{n}{p}} \, \normz{ \widetilde{\mu}_{n,S_\mu} -  \mu_{S_\mu}^* } &\leq  \normzM{\bigg(\frac{1}{n} \, \big(\XmuS\big)^\top  \XmuS \bigg)^{-1}} \Bigg( \sqrt{\frac{n}{p}} \, \normz{\frac{1}{n} \, \big(\X_{n,S_\mu}^{\mu}\big)^\top  \epsmu}  \notag \\
	& \quad \quad \quad \quad \quad \quad + \sqrt{\frac{n}{p}} \, \normz{\lambda_n^{\mu} \, \bigg( \frac{1}{ |\widehat{\mu}_{n,S_{\mu}}^{\,\init}|} \odot \sign\big(\mu_{S_\mu}^*\big) \bigg) } \Bigg) \notag \\
	&= \Op{1} \big( \Op{1} + \op{1} \big) = \Op{1}
\end{align*}
by \eqref{proof:first:growing:1}, \eqref{proof:first:growing:3} and Lemma \ref{lemma:gradient:bounded:first:moments:growing}. In particular, this implies 
\begin{align*}
	\normz{ \widetilde{\mu}_{n,S_\mu} -  \mu_{S_\mu}^* } = \Op{\sqrt{p/n}}=\op{1} 
\end{align*}
by Assumption \ref{ass:first:moments:limit:pn}, and hence the second condition, $\sign\big(\widetilde{\mu}_{n,S_\mu}\big)= \sign\big(\mu_{S_\mu}^*\big)$, of Lemma \ref{lemma:primal:dual:witness:adaptive:lasso} is also satisfied with high probability for large sample sizes $n$. Sign-consistency of the adaptive LASSO and $\widehat{\mu}_{n,S_\mu}^{\,\AL} = \widetilde{\mu}_{n,S_\mu}$ is the consequence.
\end{proof}
	
\end{document}